\begin{document}

\newtheorem{lemma}{Lemma}[section]
\newtheorem{theorem}{Theorem}[section]
\newtheorem{Condition}{Condition}[section]
\newtheorem{Integrator}{Integrator}
\newtheorem{Remark}{Remark}[section]

\title{From efficient symplectic exponentiation of matrices to
symplectic integration of high-dimensional Hamiltonian systems with slowly varying quadratic stiff potentials}

\author{Molei Tao\footnotemark[2] \footnotemark[4], Houman Owhadi\footnotemark[2] \footnotemark[3], Jerrold E. Marsden\footnotemark[2] \footnotemark[3]}

\renewcommand{\thefootnote}{\fnsymbol{footnote}}
\footnotetext[2]{Control \& Dynamical Systems, MC 107-81,}
\footnotetext[3]{Applied \& Computational Mathematics, MC 217-50, California Institute of Technology, Pasadena, CA 91125, USA}
\footnotetext[4]{Corresponding author; Email: mtao@caltech.edu}
\renewcommand{\thefootnote}{\arabic{footnote}}

\maketitle

\begin{abstract}
 We present a multiscale integrator for Hamiltonian systems with slowly varying quadratic stiff potentials that uses coarse timesteps (analogous to what the impulse method uses for constant quadratic stiff potentials). This method is based on the highly-non-trivial introduction of two efficient symplectic schemes for exponentiations of matrices that only require $\mathcal{O}(n)$ matrix multiplications operations at each coarse time step for a preset small number $n$. The proposed integrator is shown to be
  (i) uniformly convergent on positions; (ii)  symplectic in both slow and fast variables; (iii) well adapted to high dimensional systems.
  Our framework also provides a general method for iteratively exponentiating a slowly varying sequence of (possibly high dimensional) matrices in an efficient way.
\end{abstract}

\maketitle

\section{Introduction}

One objective of this paper is to obtain an explicit and efficient numerical integration algorithm for the following multiscale Hamiltonian system:
\begin{equation}
    \left\{ \begin {array} {rcl}
    M \begin{bmatrix} \dot{q}^{fast} \\ \dot{q}^{slow} \end{bmatrix} &=& \begin{bmatrix} p^{fast} \\ p^{slow} \end{bmatrix} \\
    \begin{bmatrix} \dot{p}^{fast} \\ \dot{p}^{slow} \end{bmatrix} &=& -\nabla V(q^{fast},q^{slow})-\epsilon^{-1}\nabla U(q^{fast},q^{slow})
    \end {array} \right.
    \label{ODEs}
\end{equation}
where $q^{slow},p^{slow}$  and $q^{fast},p^{fast}$ are slow and fast degrees of freedom (in the sense that slow degrees of freedom have bounded time derivatives, whereas time derivatives of fast ones may grow unboundedly as $\epsilon \rightarrow 0$). Observe that a direct numerical integration of \eqref{ODEs} becomes prohibitive as $\epsilon \downarrow 0$. Notice also that not all stiff Hamiltonian systems are multiscale, and whether a separation of timescales exists depends on specific forms of $V(\cdot)$, $U(\cdot)$ and initial conditions. To the authors' knowledge, a generic theory that determines whether a stiff system is multiscale has not been fully developed yet.

We will mainly discuss and analyze the case where $U(q^{fast},q^{slow})=\frac{1}{2}[q^{fast}]^T K(q^{slow})q^{fast}$, which we call a quasi-quadratic potential throughout this paper. In this case, the proposed method will be able to integrate the system using a coarse timestep. Notice that if $K$ remains constant with  respect to $q^{slow}$, then the impulse method \cite{Grubmuller:91,Tuckerman:92,Skeel:99, SIM1} allows for an accurate and symplectic (see for instance \cite{Hairer:04} for a definition) integration of
\eqref{ODEs} using coarse steps. The impulse method can, in principle, integrate the situation where $K$ is a regular function of slow variables; however, its practical implementation requires a numerical approximation to the stiff system
\begin{equation}
\begin{cases}
\ddot{q}^{fast}&=-\epsilon^{-1}\partial U/\partial q^{fast} (q^{fast},q^{slow}) \\
\ddot{q}^{slow}&=-\epsilon^{-1}\partial U/\partial q^{slow} (q^{fast},q^{slow})
\end{cases},
\end{equation}
which generally needs to be based on a numerical integration with small steps. The advantage of the impulse method over Verlet is that $\nabla V$ only needs to be evaluated at coarse timesteps, but nevertheless its computational cost blows up as $\epsilon\rightarrow 0$.

To use a coarse integration timestep independent of $\epsilon$, we adopt a splitting approach to treat the slow and fast variables separately. At each coarse step, we will require an exact solution or a numerical approximation to the following stiff system:
\begin{equation}
\begin{cases}
\ddot{q}^{fast}&=-\epsilon^{-1}\partial U/\partial q^{fast} (q^{fast},\cdot) \\
\dot{p}^{slow}&=-\epsilon^{-1}\partial U/\partial q^{slow} (q^{fast},\cdot)
\end{cases},
\label{fastonly}
\end{equation}
in which $q^{slow}$ is fixed (different from the impulse method). To obtain such an approximation, we compute the exponential of a matrix that depends on $K(q^{slow})$ and $\partial K(q^{slow})$. Still, if not handled appropriately, the cost of this method blows up rapidly as $\epsilon$ decreases and/or the dimension of the system increases. Furthermore, symplecticity would also be jeopardized by inaccuracies of the numerical exponentiations.

In this paper, we propose an integrator well-adapted to high-dimensional systems, which computes the exponentiation in an efficient and symplectic way. Only $\mathcal{O}(n)$ matrix multiplication operations at each coarse time step are needed, where $n$ is a preset small integer at most $\log \epsilon^{-1}$.
Although simple in appearance,  to guarantee the symplecticity (in all variables) of the resulting method without  diagonalizing  $K(q^{slow})$ (which is expensive) is a surprisingly difficult problem, and in fact it is highly non-trivial even when $K(q^{slow})$ is a scalar \cite{LeBris:07}.

In addition to a solution to this problem, this paper also provides a general method for iteratively exponentiating a slowly varying sequence of (possibly high dimensional) matrices  in an efficient way (see Section \ref{bliblibloublou} and the Appendix). This method works for any matrices, and it is not restricted to the integration of \eqref{ODEs}. The preservation of symplecticity associated with these two proposed matrix exponentiation schemes (both suit high dimensional systems; the first one is in Section \ref{gguqirehugqubliafd}) is a core difficulty addressed in this paper.

Also, useful discrete geometric structures regarding the flow map of a parameter-dependent vectorial harmonic oscillator have been studied; for instance, derivatives of its flow map with respect to the parameter can be computed using matrix exponentials.

Although backward error analysis (relating symplecticity and energy conservation) does not apply directly to stiff systems (due to large Lipschitz constants),  we numerically observe improved long time behaviors for the proposed integrator, such as near-preservation of energy and conservation of momentum maps. We also note that modulated Fourier expansion \cite{CoHaLu03} has been proposed to explain favorable long time energy behaviors of some integrators for oscillatory Hamiltonian systems.

We prove the uniform (in $\epsilon^{-1}$) convergence of the method and bound the global error
on  $q^{slow}$ by  $C H$, where $H$ is a coarse integration timestep and $C$ is a constant independent of $\epsilon^{-1}$.

\section{The proposed method}
\subsection{General methodology}
\label{SectionGIM}
Consider a Hamiltonian system with the following Hamiltonian:
\begin{equation}
    \mathcal{H}(q,p)=\frac{1}{2}p^T M^{-1}p+V(q)+\epsilon^{-1}U(q^{fast},q^{slow}),
    \label{quasi-quadratic}
\end{equation}
where $0 < \epsilon\ll 1$, $q \in \mathbb{R}^d$, $p \in \mathbb{R}^d$. $q^{fast} \in \mathbb{R}^{d_f}$ and $q^{slow} \in \mathbb{R}^{d_s}$ are fast and slow variables. For the sake of clarity, we will assume that $q=(q^{fast},q^{slow})$ (and hence $d=d_f+d_s$), but the method presented here can be generalized to the situation where $(q^{fast},q^{slow})=\eta(q)$, where $\eta$ is a diffeomorphism explicitly known beforehand (for instance, see \cite{DoLeLe10} for an example where a diffeomorphism puts an extensible pendulum into a quasi-quadratic form). The idea of separating the variables is that fast variables need to be integrated using  $o(\sqrt{\epsilon})$ timesteps by a single scale integrator, whereas slow variables can be resolved using  $o(1)$ steps. Dividing variables into different timescales is a widely used technique in studying stiff and multiscale systems (e.g., \cite{MR2314852, KevGio09}). A rigorous definition of separation of timescales can be found in \cite{FLAVOR09}, for instance.

Without loss of generality, we can further assume $M$ to be the identity matrix.
The governing ODE system is
\begin{equation}
    \left\{ \begin{array}{rcl}
    \dot{q}^{fast} &=& p^{fast} \\
    \dot{p}^{fast} &=& -\epsilon^{-1}\frac{\partial U}{\partial q^{fast}}-\frac{\partial V}{\partial q^{fast}} \\
    \dot{q}^{slow} &=& p^{slow} \\
    \dot{p}^{slow} &=& -\epsilon^{-1}\frac{\partial U}{\partial q^{slow}}-\frac{\partial V}{\partial q^{slow}}
    \end{array} \right.
    \label{fullSystem}
\end{equation}
which can be split into a sum of three vector fields:
\[
    \left\{ \begin{array}{rcl}
    \dot{q}^{fast} &=& 0 \\
    \dot{p}^{fast} &=& 0 \\
    \dot{q}^{slow} &=& p^{slow} \\
    \dot{p}^{slow} &=& 0
    \end{array} \right. \\
    \left\{ \begin{array}{rcl}
    \dot{q}^{fast} &=& 0 \\
    \dot{p}^{fast} &=& -\frac{\partial V}{\partial q^{fast}} \\
    \dot{q}^{slow} &=& 0 \\
    \dot{p}^{slow} &=& -\frac{\partial V}{\partial q^{slow}}
    \end{array} \right.
    \left\{ \begin{array}{rcl}
    \dot{q}^{fast} &=& p^{fast} \\
    \dot{p}^{fast} &=& -\epsilon^{-1}\frac{\partial U}{\partial q^{fast}} \\
    \dot{q}^{slow} &=& 0 \\
    \dot{p}^{slow} &=& -\epsilon^{-1}\frac{\partial U}{\partial q^{slow}}
    \end{array} \right.
\]
such that the exact flow of each could be obtained, which is also symplectic at the same time. Indeed, denote the flow maps of all systems by $\phi^i(s),i=1,2,3$ over a time of $s$. It is easy to see that they are all symplectic.

Observe that $\phi^1$ and $\phi^2$ are analytically available. We only consider the case where $\phi^3$ is also analytically or numerically known; more precisely, the numerical solution $\tilde{\phi}^3$ has to have a consistent uniform local error over a coarse time step $H=o(1)$, i.e., $\| \tilde{\phi}^3(H)-\phi^3(H) \| \leq CH^2$ for a constant $C$ independent of $\epsilon^{-1}$. This can be satisfied for arbitrary $U(\cdot)$ by a symplectic integration with a microscopic timestep $h=o(\sqrt{\epsilon})$, which is in the same spirit as the impulse method. One step update of the proposed method is obtained by composing the three flow maps: $\phi^1(H) \circ \phi^2(H) \circ \phi^3(H)$. Notice that any split can result in a convergent numerical scheme, but this particular split treats two timescales independently and therefore is uniformly convergent at least in the quasi-quadratic stiff potential case (illustrated later); also, it results in a symplectic scheme.

\begin{Remark}
    If there were no slow variable, we would compose the flows of $\begin{cases}
    \dot{q}^{fast} = p^{fast} \\
    \dot{p}^{fast} = -\epsilon^{-1}\frac{\partial U}{\partial q^{fast}}
    \end{cases}$ and $\begin{cases}
    \dot{q}^{fast} = 0 \\
    \dot{p}^{fast} = -\frac{\partial V}{\partial q^{fast}}
    \end{cases}$ and obtain a first-order version of the original impulse method.
    \label{degenerateToImpulse}
\end{Remark}

\begin{Remark}
    There are also alternative higher-order ways of composing these flow maps; see, for instance, \cite{Hairer:04, Neri:88}. In fact, the original impulse method is second-order and can be constructed from a second-order composition scheme. However, we will stick to first-order Lie-Trotter ($\phi^1(H) \circ \phi^2(H) \circ \phi^3(H)$) in this paper.
\end{Remark}

\subsection{Quasi-quadratic fast potentials}
We will, from now on, discuss and analyze an analytically solvable case (so that a uniform coarse timestep could be used), in which $U=\frac{1}{2}{[q^{fast}]}^T K(q^{slow}) q^{fast}$, where $K$ is a positive definite $d_f$-by-$d_f$ matrix valued function. This fast potential represents stiff harmonic oscillators with non-constant but slowly varying frequencies. We call such potentials quasi-quadratic. In this case, the first split vector field is
\begin{equation}\label{eqqasiquadflow}
    \left\{ \begin{array}{rcl}
    \dot{q}^{fast} &=& p^{fast} \\
    \dot{p}^{fast} &=& -\epsilon^{-1}K(q^{slow})q^{fast} \\
    \dot{q}^{slow} &=& 0 \\
    \dot{p}^{slow} &=& -\epsilon^{-1}\frac{1}{2} {[q^{fast}]}^T \nabla K(q^{slow}) q^{fast}
    \end{array} \right.
\end{equation}
where the last equation is understood as $\dot{p}^{slow}_i = -\epsilon^{-1}\frac{1}{2} {[q^{fast}]}^T \partial_i K(q^{slow}) q^{fast}$ for $i=1,\ldots,d_s$.
The flow of this dynamical system on $q^{fast}$ and $p^{fast}$ is just an exponential map, which in this case corresponds to linear combinations of initial conditions with trigonometric coefficients. For $p^{slow}$, because $q^{slow}$ (and hence $\nabla K(q^{slow})$) is fixed, one could obtain its exact flow by analytically integrating a quadratic function of trigonometric functions.

When $d_f=1$, the exact flow map of \eqref{eqqasiquadflow} over time $H$ is (letting $\omega=\sqrt{\epsilon^{-1}K(q^{slow})}$):
\begin{equation}
\begin{cases}
    q^{fast} &\mapsto \cos(\omega H) q^{fast}+\sin(\omega H)/\omega p^{fast} \\
    p^{fast} &\mapsto -\omega \sin(\omega H) q^{fast}+\cos(\omega H) p^{fast} \\
    q^{slow} &\mapsto q^{slow} \\
    p^{slow} &\mapsto p^{slow}-\epsilon^{-1}\frac{1}{2} \nabla K(q^{slow}) \frac{1}{4\omega^3} \big(2\omega(H[p^{fast}]^2+p^{fast}q^{fast}+\omega^2 H[q^{fast}]^2) \\ & \qquad\qquad\qquad -2\omega p^{fast}q^{fast} \cos(2\omega H)+(-[p^{fast}]^2+\omega^2 [q^{fast}]^2) \sin(2 \omega H)\big)
\end{cases}
\end{equation}
where again the last equation is understood as
\begin{eqnarray}
    p^{slow}_i &\mapsto p^{slow}_i-\epsilon^{-1}\frac{1}{2} \partial_i K(q^{slow}) \frac{1}{4\omega^3} \big(2\omega(H[p^{fast}]^2+p^{fast}q^{fast}+\omega^2 H[q^{fast}]^2) \nonumber\\ & \qquad\qquad\qquad -2\omega p^{fast}q^{fast} \cos(2\omega H)+(-[p^{fast}]^2+\omega^2 [q^{fast}]^2) \sin(2 \omega H)\big)
\end{eqnarray}

When $d_f\geq 2$, the obvious method to obtain the exact flow of \eqref{eqqasiquadflow} is based on
a diagonalization of $K$.
 More precisely, since $K$ is symmetric, we can write  $\epsilon^{-1}K(q^{slow})=\epsilon^{-1}Q(q^{slow})^T D(q^{slow}) Q(q^{slow})$, where $\epsilon^{-1}D(q^{slow})=\text{diag}[\omega_1^2,\ldots,\omega_{d_f}^2]$). Then
 \begin{align}
    &\exp \left( \begin{bmatrix} 0 & HI \\ -\epsilon^{-1}HK(q^{slow}) & 0 \end{bmatrix} \right) = \begin{bmatrix} Q^T & 0 \\ 0 & Q^T \end{bmatrix} \exp \left( \begin{bmatrix} 0 & HI \\ -\epsilon^{-1}HD & 0 \end{bmatrix} \right) \begin{bmatrix} Q & 0 \\ 0 & Q \end{bmatrix}
    = \begin{bmatrix} Q^T & 0 \\ 0 & Q^T \end{bmatrix} \cdot \nonumber \\
    & \begin{bmatrix} \text{diag}[\cos (\omega_1 H), \ldots,\cos(\omega_{d_f} H)] & \text{diag}[\sin (\omega_1 H)/\omega_1, \ldots,\sin (\omega_{d_f} H)/\omega_{d_f}] \\ \text{diag}[-\sin (\omega_1 H)\omega_1, \ldots,-\sin (\omega_{d_f} H)\omega_{d_f}] & \text{diag}[\cos (\omega_1 H), \ldots,\cos(\omega_{d_f} H)] \end{bmatrix} \begin{bmatrix} Q & 0 \\ 0 & Q \end{bmatrix}
    \label{expViaDiagonalization}
\end{align}
A similar (but lengthy) calculation will give the expression of the flow on $p^{slow}$.

If the diagonalization frame of $K(\cdot)$ is constant, i.e., $Q$ does not depend on $q^{slow}$, then $Q$ needs to be computed only once throughout the simulation, and then the calculation of the flow on $q^{fast}$ and $p^{fast}$ is dominated by the cost of 2 matrix multiplication operations per coarse step (at expense of $\mathcal{O}({d_f}^{2.376})$ per multiplication by the state-of-art Coppersmith-Winograd algorithm \cite{CoWi90}). However, if the frame varies
  ($Q$ depends on $q^{slow}$), then diagonalizing $K$ at each time step can offset the gain obtained by the macro-time-stepping of the algorithm. This is especially true if $d_f$ is large. Moreover, errors in numerical diagonalizations may accumulate and deteriorate the symplecticity of $\phi^3$.

In this paper, we address those difficulties by proposing a method, described below, for  the numerical integration of \eqref{eqqasiquadflow} that is symplectic and that remains computationally tractable in high-dimensional cases (large $d_f$).

\subsection{Fast numerical matrix exponentiation for the symplectic integration of \eqref{eqqasiquadflow}}
\label{gguqirehugqubliafd}
The proposed method is based on matrix exponentiation. We will first describe its analytical formulation, and then present an accurate numerical approximation that is both symplectic and computationally cheap.

The first step of our method is based on the following property of matrix exponentials illustrated in \cite{Va78}: if $N$ and $M$ are constant square matrices of the same dimension, then
\begin{equation}
 \exp\left(\begin{bmatrix} -N^T & M \\ 0 & N \end{bmatrix} H\right) = \begin{bmatrix} F_2(H) & G_2(H) \\ 0 & F_3(H) \end{bmatrix}
\label{pureMatrixExponentiation}
\end{equation}
with
\begin{equation}\label{ghfdgd454d54d4}
\begin{cases}
    F_2(H) &= \exp(-N^T H) \\
    F_3(H) &= \exp(N H) \\
    F_3(H)^T G_2(H) &= \int_0^H \exp (N^T s) M \exp (N s) \,ds
\end{cases}
\end{equation}
Therefore,  ordering coordinates as $q^{fast},p^{fast}$,  taking $N:=\begin{bmatrix} 0 & I \\ -\epsilon^{-1}K(q^{slow}) & 0 \end{bmatrix}$ and $M_i:=\begin{bmatrix} \epsilon^{-1} \partial_i K (q^{slow}) & 0 \\ 0 & 0 \end{bmatrix}$ with $i=1,\ldots,d_s$ which indicates the component of the slow variable, we obtain that
if
\begin{equation}
 \begin{bmatrix} F_2(H) & G_{2,i}(H) \\ 0 & F_3(H) \end{bmatrix} := \exp\left(\begin{bmatrix} -N^T & M_i \\ 0 & N \end{bmatrix} H\right)
\label{pureMatrixExponentiationii}
\end{equation}
then the (linear) flow map on $q^{fast},p^{fast}$ is given by
\begin{equation}
    \exp(N H) = F_3(H)
\end{equation}
and the drift on $p^{slow}$ is given by
\begin{align}\label{ghfdsed54d4}
    & \int_t^{t+H} q^{fast}(s)^T \epsilon^{-1} \partial_i K (q^{slow}) q^{fast}(s) \,ds = \int_0^H \begin{bmatrix} q^{fast}(t) \\ p^{fast}(t) \end{bmatrix}^T \exp (N^T s) M_i \exp (N s) \begin{bmatrix} q^{fast}(t) \\ p^{fast}(t) \end{bmatrix} \,ds \nonumber\\
    & \qquad \qquad =
    \begin{bmatrix} q^{fast}(t) \\ p^{fast}(t) \end{bmatrix}^T F_3(H)^T G_{2,i}(H) \begin{bmatrix} q^{fast}(t) \\ p^{fast}(t) \end{bmatrix}
\end{align}

Therefore,  $\phi^3(H)$ is given by:
\begin{equation}
\begin{cases}
\begin{bmatrix} q^{fast} \\ p^{fast} \end{bmatrix} &\mapsto F_3(H) \begin{bmatrix} q^{fast} \\ p^{fast} \end{bmatrix}
\\
q^{slow} &\mapsto q^{slow} \\
p^{slow}_i &\mapsto p^{slow}_i-\frac{1}{2}  \begin{bmatrix} q^{fast} \\ p^{fast} \end{bmatrix}^T F_3(H)^T G_{2,i} (H) \begin{bmatrix} q^{fast} \\ p^{fast} \end{bmatrix}
\end{cases}
\label{phi3}
\end{equation}
where in the last equation $i=1,\ldots,d_s$.

In addition, our specific choice of $M_i$ is a symmetric matrix for each $i$, because $K(\cdot)$ is symmetric. Consequently, $\exp (N^T s) M_i \exp (N s)$ is symmetric, and therefore
\begin{equation}
    F_3(H)^T G_{2,i}(H) = (F_3(H)^T G_{2,i}(H))^T
    \label{symmetricMatrix}
\end{equation}

Assuming we have $F_3$ and $G_{2,i}$ (which will be given by Integrator \ref{fastMatrixExpScheme}), \eqref{quasi-quadratic} can be integrated by the following:

\begin{Integrator}
Symplectic multi-scale integrator for \eqref{quasi-quadratic} with $U=\frac{1}{2}{[q^{fast}]}^T K(q^{slow}) q^{fast}$. Its one-step update mapping $q_{k},p_{k}$ onto $q_{k+1},p_{k+1}$ with a coarse timestep $H$ is given by:
\begin{align}
    &\left\{ \begin{array}{rcl}
        q_{k'}^{slow} &=& q_{k}^{slow}+H p_{k}^{slow} \\
        q_{k'}^{fast} &=& q_{k}^{fast} \\
        p_{k'}^{slow} &=& p_{k}^{slow}-H\partial V/ \partial q^{slow} (q_{k'}^{slow},q_{k'}^{fast}) \\
        p_{k'}^{fast} &=& p_{k}^{fast}-H\partial V/ \partial q^{fast} (q_{k'}^{slow},q_{k'}^{fast}) \\
    \end{array} \right. \label{phi12} \\
    &\left\{ \begin{array}{rcl}
        \begin{bmatrix}
        q_{k+1}^{fast} \\
        p_{k+1}^{fast}
        \end{bmatrix}
        &=& F_{3,k}
        \begin{bmatrix}
        q_{k'}^{fast} \\
        p_{k'}^{fast}
        \end{bmatrix} \\
        q_{k+1}^{slow} &=& q_{k'}^{slow} \\
        p_{k+1,i}^{slow} &=& p_{k',i}^{slow} - \frac{1}{2} \begin{bmatrix}
        q_{k'}^{fast} \\
        p_{k'}^{fast}
        \end{bmatrix}^T F_{3,k}^T G_{2,k,i} \begin{bmatrix}
        q_{k'}^{fast} \\
        p_{k'}^{fast}
        \end{bmatrix}
    \end{array} \right. \label{phi3numerical}
\end{align}
where $F_{2,k}$, $G_{2,k,i}$ ($i=1,\ldots,d_s$) and $F_{3,k}$ are numerical approximations of that in \eqref{pureMatrixExponentiationii} at each time step $k'$ (using $q^{slow}_{k'}$), for instance computed by Integrator \ref{fastMatrixExpScheme}.
\label{extended_SIM}
\end{Integrator}

To numerically approximate the above flow map \eqref{phi3}, i.e., to obtain $F_{3,k}$ and $G_{2,k,i}$, we need to ensure two points: (i) an approximation of the matrix exponential (and hence $F_{3,k}$ and $G_{2,k,i}$) will not affect the symplecticity of the resulting approximation of $\phi^3$; (ii) the numerical computation of the exponential \eqref{pureMatrixExponentiationii} will not off-set the savings gained by using a coarse timestep. It is highly non-trivial to satisfy both simultaneously, because most matrix exponentiation methods will ruin the symplecticity of $\phi^3$ unless high precision (much higher than the requirement on accuracy) is enforced, but then the computational cost will be high. In fact, a necessary and sufficient condition for symplecticity is given by Lemma \ref{lemma_flowSymplecticity}, and it is unclear how most matrix exponentiation methods, for instance those based on matrix decompositions (e.g., diagonalization, QR decomposition) with computational costs of $Cd_f^3$ flops, will satisfy this condition (unless $C$ is very large and the approximation is very accurate). Also, here is an illustration of a popular non-decomposition-based exponentiation method that fails to satisfy this symplecticity condition:
\paragraph{Example:}
MATLAB function `expm' \cite{ScalingSquaring} uses a scaling and squaring strategy based on the following identity:
\begin{equation}
    \exp(X)=[\exp (X/2^n)]^{2^n}
\end{equation}
where $n$ is a big enough preset integer such that $X/2^n$ has a small norm, and therefore Pad\'{e} approximation \cite{ScalingSquaring} could be employed to approximate $\exp (X/2^n)$. The simplest (1,0) Pad\'{e} approximation, which is essentially Taylor expansion to 1st-order, gives
\begin{equation}
    \exp(X) \approx [I+X/2^n]^{2^n}
    \label{expm}
\end{equation}
However, this approximation is not symplectic. For instance, consider a counterexample of $X=\begin{bmatrix} 0 & I \\ -\Omega^2 & 0 \end{bmatrix}$. Obviously, this corresponds to a vectorial harmonic oscillator, and $\exp(X)$ ought to be symplectic. However, it can be easily checked that $A:=I+X/2^n$ does not satisfy $A^T J A=J$ and hence is not symplectic. $\square$

\smallskip

Our idea is to obtain $F_{2,k}$ and $F_{3,k}$ using a modified scaling and squaring strategy, in which the Pad\'{e} approximation is replaced by a symplectic approximation originated from a reversible symplectic integrator (we use Velocity-Verlet). More precisely, suppose $h>0$ is a small constant, then we have the following identity:
\begin{equation}
    \begin{bmatrix} F_{2,k}(H) & G_{2,k,i}(H) \\ 0 & F_{3,k}(H) \end{bmatrix} = \begin{bmatrix} F_{2,k}(h) & G_{2,k,i}(h) \\ 0 & F_{3,k}(h) \end{bmatrix}^{H/h}
\end{equation}
$F_{3,k}(h)$ can be approximated by the following:
\begin{equation}
    \exp \begin{bmatrix} 0 & hI \\ -h\epsilon^{-1}K(q^{slow}_{k'}) & 0 \end{bmatrix}
    \approx \begin{bmatrix} I-\frac{h^2}{2} \epsilon^{-1}K(q^{slow}_{k'}) &
    h\left(I-\frac{h^2}{4} \epsilon^{-1}K(q^{slow}_{k'})\right) \\
    -h\epsilon^{-1}K(q^{slow}_{k'}) &
    I-\frac{h^2}{2} \epsilon^{-1}K(q^{slow}_{k'}) \end{bmatrix} ,
    \label{symplecticCVerlet}
\end{equation}
which can be easily checked to be symplectic thanks to the specific $\mathcal{O}(h^2)$ and $\mathcal{O}(h^3)$ corrections in the above expression.

It is a classical result (global error bound of Velocity-Verlet) that links $F_{3,k}(H)$ with the approximated $F_{3,k}(h)$:
\begin{equation}
    \left\| \exp \begin{bmatrix} 0 & HI \\ -H\epsilon^{-1}K(q^{slow}_{k'}) & 0 \end{bmatrix}
    - \begin{bmatrix} I-\frac{h^2}{2} \epsilon^{-1}K(q^{slow}_{k'}) &
    h\left(I-\frac{h^2}{4} \epsilon^{-1}K(q^{slow}_{k'})\right) \\
    -h\epsilon^{-1}K(q^{slow}_{k'}) &
    I-\frac{h^2}{2} \epsilon^{-1}K(q^{slow}_{k'}) \end{bmatrix}^{H/h} \right\|_2 \leq \epsilon^{-1} C \exp(CH)h^2
\end{equation}
for some constant $C>0$, because the approximation in \eqref{symplecticCVerlet} corresponds to the celebrated Velocity-Verlet integrator with updating rule:
\begin{equation}
\begin{cases}
    x_{i+\frac{1}{2}}=x_i+\frac{h}{2} y_i \\
    y_{i+1}=y_i-h \epsilon^{-1} K(q^{slow}_{k'}) x_{i+\frac{1}{2}} \\
    x_{i+1}=x_{i+\frac{1}{2}}+\frac{h}{2} y_{i+1}
\end{cases}
\end{equation}
for the system $
\begin{cases}
    \dot{x}=y \\
    \dot{y}=-\epsilon^{-1} K(q^{slow}_{k'}) x
\end{cases}$,
which is well-known to have a 2nd-order global error.

We can repeat the same procedure to get an approximation of $F_{2,k}(H)$ by using the following approximated $F_{2,k}(h)$:
\begin{equation}
    \exp \begin{bmatrix} 0 & h\epsilon^{-1}K^T(q^{slow}_{k'}) \\ -hI & 0 \end{bmatrix}
    \approx \begin{bmatrix} I-\frac{h^2}{2} \epsilon^{-1}K^T(q^{slow}_{k'}) &
    h\epsilon^{-1}K^T(q^{slow}_{k'}) \\
    -h\left(I-\frac{h^2}{4} \epsilon^{-1}K^T(q^{slow}_{k'})\right) &
    I-\frac{h^2}{2} \epsilon^{-1}K^T(q^{slow}_{k'}) \end{bmatrix}
    \label{symplecticAVerlet}
\end{equation}

To approximate $G_{2,k,i}(h)$, we follow the result of Lemma \ref{lemma_flowSymplecticity} that in the continuous case $G_{2,k,i}=-J \frac{\partial}{\partial q^{slow}_{k',i}} F_{3,k}$ and let
\begin{equation}
    G_{2,k,i}(h)=-J \partial_i F_{3,k}(h) \approx \begin{bmatrix} h\epsilon^{-1}\frac{\partial}{\partial q^{slow}_{k',i}} K(q^{slow}_{k'}) &
    \frac{h^2}{2}\epsilon^{-1}\frac{\partial}{\partial q^{slow}_{k',i}} K(q^{slow}_{k'}) \\
    -\frac{h^2}{2}\epsilon^{-1}\frac{\partial}{\partial q^{slow}_{k',i}} K(q^{slow}_{k'}) &
    -\frac{h^3}{4}\epsilon^{-1}\frac{\partial}{\partial q^{slow}_{k',i}} K(q^{slow}_{k'})  \end{bmatrix}
    \label{symplecticBVerlet}
\end{equation}

Notice that if (1,0) Pad\'{e} approximation (i.e., 1st-order Taylor expansion) is used, we will get
\begin{equation}
    G_{2,k,i}(h)\approx h M_i = \begin{bmatrix} h\epsilon^{-1}\frac{\partial}{\partial q^{slow}_{k',i}} K(q^{slow}_{k'}) & 0 \\ 0 & 0 \end{bmatrix}
\end{equation}
Naturally, \eqref{symplecticBVerlet} is a higher order correction of this.

$G_{2,k,i}(H)$ will also be accurate: since the accuracy of \eqref{expm} is well established, the higher order corrections that we add in $F_{2,k}(H),F_{3,k}(H),G_{2,k,i}(H)$ will not lead a scheme less accurate. This can immediately be seen in the context of the numerical integration of a stable system, where a local error of $\mathcal{O}(h^2)$ will only lead to a global error of at most $\epsilon^{-1}CHh$ \cite{LaRi56}. We also refer to Appendix A in \cite{MoLo:03} for an analogous error analysis if one prefers to directly work with matrices.

To sum up, the following numerical approximation of $F_{3,k}$ and $G_{2,k,i}$ will simultaneously guarantee symplecticity, accuracy, and efficiency:

\begin{Integrator}
Matrix exponentiation scheme that complements the updating rule of Integrator \ref{extended_SIM}. $n\geq 1$ is an integer controlling the accuracy of the approximation of the matrix exponentials. $k$ is the same index as the one used in Integrator \ref{extended_SIM}, and the following needs to be done for each $k$:

\begin{enumerate}
\item
    Evaluate $K_k:=K(q^{slow}_{k'})$ and $\partial_i K_k:=\frac{\partial}{\partial q^{slow}_{k',i}} K(q^{slow}_{k'})$. Let $h=H/2^n$,
    \begin{equation}
        A_k:=\begin{bmatrix} I-\epsilon^{-1} K_k \frac{h^2}{2} && \epsilon^{-1} K_k h \\ -h (I- \epsilon^{-1} K_k \frac{h^2}{4} ) && I-\epsilon^{-1} K_k \frac{h^2}{2} \end{bmatrix} ,
        \label{Definition_Ak}
    \end{equation}
    \begin{equation}
        C_k:=\begin{bmatrix} I-\epsilon^{-1} K_k \frac{h^2}{2} && h (I- \epsilon^{-1} K_k \frac{h^2}{4} ) \\ -\epsilon^{-1} K_k h && I-\epsilon^{-1} K_k \frac{h^2}{2} \end{bmatrix} ,
        \label{Definition_Ck}
    \end{equation}
    and for $i=1,\ldots,d_s$,
    \begin{equation}
        B_{k,i}:=\begin{bmatrix} \epsilon^{-1} \partial_i K_k h && \epsilon^{-1} \partial_i K_k \frac{h^2}{2} \\ -\epsilon^{-1} \partial_i K_k \frac{h^2}{2} && -\epsilon^{-1} \partial_i K_k \frac{h^3}{4} \end{bmatrix} .
        \label{Definition_Bk}
    \end{equation}
\item
    Let $F_{2,k}^1:=A_k$, $G_{2,k,i}^1:=B_{k,i}$, $F_{3,k}^1:=C_k$, then repetitively apply $\begin{bmatrix} F_{2,k}^{j+1} & G_{2,k,i}^{j+1} \\ 0 & F_{3,k}^{j+1} \end{bmatrix}: = \begin{bmatrix} F_{2,k}^j & G_{2,k,i}^j \\ 0 & F_{3,k}^j \end{bmatrix}^2 = \begin{bmatrix} F_{2,k}^j F_{2,k}^j & F_{2,k}^j G_{2,k,i}^j + G_{2,k,i}^j F_{3,k}^j \\ 0 & F_{3,k}^j F_{3,k}^j \end{bmatrix}$ for $j=1,\ldots,n$.
\item
    Define $F_{2,k}:=F_{2,k}^{n+1}$, $G_{2,k,i}:=G_{2,k,i}^{n+1}$, $F_{3,k}=F_{3,k}^{n+1}$.
    \label{exponential_repeat}
\end{enumerate}
\label{fastMatrixExpScheme}
\end{Integrator}

\begin{Remark}
    The trick for the computational save is that raising to the $2^{n}th$ power is computed by $n$ self multiplications, which is due to the semi-group property of the exponentiation operation. An obvious upper bound to guarantee accuracy is $n\leq C \log \epsilon^{-1}$ (because the error of numerical exponentiation is bounded by $\epsilon^{-1}Ch=\epsilon^{-1}CH/2^n$). In all numerical experiments in this paper, $n=10$ worked well, which is a value much smaller than $\log \epsilon^{-1}$, and this choice of $n$ makes the computation cost of the same order as if $K$ could be diagonalized by a constant orthogonal matrix.
\end{Remark}

\begin{Remark}
    Observe that, for a finite-time simulation, the cost of computing $\phi^3$ numerically with microscopic time-steps  blows up with a speed of $\mathcal{O}(\epsilon^{-1})$, whereas the cost of matrix exponentiations via Integrator \ref{fastMatrixExpScheme} blows up at a maximum speed of $\mathcal{O}(\log \epsilon^{-1})$.
\end{Remark}

Theorem \ref{symplecticity} shows that Integrator \ref{fastMatrixExpScheme} not only ensures $F_{2,k}$ and $F_{3,k}$ to be symplectic, but also guarantees a symplectic approximation to $\phi^3$ (Eq. \ref{phi3}).

Speed-up is obtained because at each step the computation cost is dominated by $2(d_s+1)n$ matrix production operations (of $d_f\times d_f$ matrices), where $n$ is a small integer. If the Coppersmith-Winograd algorithm is used to realize the matrix multiplication operation,  then the time complexity for exponentiation at each step is $n\mathcal{O}(d_f^{2.376})$ (assuming $d_s=\mathcal{O}(1)$; the problem of matrix exponentiation is less difficult otherwise).

\subsection{Fast numerical matrix exponentiation for the symplectic integration of \eqref{eqqasiquadflow}: an alternative}\label{bliblibloublou}
An alternative way to approximate the flow map \eqref{phi3} is to use the slowly varying property of $K$ to generate a symplectic update of the exponential computed at the previous step. The main idea of the method is as follows: given a sequence of matrices $\{X_k\}$ that vary slowly, use the approximation
\begin{equation}
    \exp (X_k)=[\exp (X_k/2^n)]^{2^n}\approx [\exp (X_{k-1}/2^n) \exp ((X_k-X_{k-1})/2^n)]^{2^n}
    \label{fastMatrixExp}
\end{equation}
where $n$ is a preset constant. Again, we use the trick of self-multiplication for computing the $2^{n}th$ power, and efficiency is guaranteed exactly as before.

Accuracy is achieved because, as shown in the following theorem, the approximation error decreases at an exponential rate with respect to $n$.

\begin{theorem} Theorem 5 in \cite{MoLo:03}:
    \begin{equation}
        \| \exp(A+B)-(\exp(A/2^n)\exp(B/2^n))^{2^n} \|_2 \leq 2^{-n-1}e^{\max(\mu(A+B),\mu(A)+\mu(B))} \|[A,B]\|_2
        \label{Lie-Trotter_bound_equation}
    \end{equation}
    where $\mu(X)$ is the maximum eigenvalue of $(X^*+X)/2$, and $[A,B]=AB-BA$ is the canonical Lie bracket.
\end{theorem}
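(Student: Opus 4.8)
\emph{Proof plan.} The plan is to convert the error after $2^n$ compositions into a single-substep local error by a telescoping identity, and then to bound that local error with a Duhamel (variation-of-constants) formula arranged so that the conjugating exponentials telescope cleanly; this last point is what prevents spurious growth factors from entering and is what yields the single exponential factor in the statement.

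Write $m=2^n$, $h=1/m$, $S=\exp((A+B)h)$, $T=\exp(Ah)\exp(Bh)$ and $\nu:=\max(\mu(A+B),\mu(A)+\mu(B))$, so that the quantity to be bounded is $\|S^m-T^m\|_2$. First I would use the telescoping identity $S^m-T^m=\sum_{k=0}^{m-1}S^{m-1-k}(S-T)T^k$, which gives $\|S^m-T^m\|_2\le\sum_{k=0}^{m-1}\|S\|_2^{m-1-k}\|T\|_2^{k}\|S-T\|_2$. Combined with the classical logarithmic-norm (numerical abscissa) bound $\|\exp(tX)\|_2\le e^{t\mu(X)}$ for $t\ge0$ — so that $\|S\|_2\le e^{h\mu(A+B)}\le e^{h\nu}$ and $\|T\|_2\le e^{h\mu(A)}e^{h\mu(B)}\le e^{h\nu}$ — this yields $\|S^m-T^m\|_2\le m\,e^{(m-1)h\nu}\|S-T\|_2$, reducing the problem to a local estimate on $\|S-T\|_2$.

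For the local estimate I would set $\Phi(t)=\exp(tA)\exp(tB)$ and $\Psi(t)=\exp(t(A+B))$, note $\Phi(0)=\Psi(0)=I$, and by differentiating and regrouping obtain $\tfrac{d}{dt}(\Phi-\Psi)=(A+B)(\Phi-\Psi)+[\exp(tA),B]\exp(tB)$. Variation of constants then gives $\Phi(t)-\Psi(t)=\int_0^t\exp((t-s)(A+B))\,[\exp(sA),B]\,\exp(sB)\,ds$. The key move is to substitute $[\exp(sA),B]=\int_0^s\exp(uA)\,[A,B]\,\exp((s-u)A)\,du$ (from differentiating $u\mapsto\exp(uA)B\exp((s-u)A)$): the conjugating factors now occur as $\exp(uA)\cdots\exp((s-u)A)$, so taking $\|\cdot\|_2$ and using $\|\exp(uA)\|_2\|\exp((s-u)A)\|_2\le e^{u\mu(A)}e^{(s-u)\mu(A)}=e^{s\mu(A)}$ makes the inner $u$-integral contribute only the factor $s$, with no $\|\exp(-uA)\|_2$ appearing. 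Bounding also $\|\exp((t-s)(A+B))\|_2\le e^{(t-s)\mu(A+B)}$ and $\|\exp(sB)\|_2\le e^{s\mu(B)}$ and using $(t-s)\mu(A+B)+s(\mu(A)+\mu(B))\le t\nu$, one arrives at $\|\Phi(t)-\Psi(t)\|_2\le e^{t\nu}\|[A,B]\|_2\int_0^t s\,ds=\tfrac12 t^2e^{t\nu}\|[A,B]\|_2$, hence $\|S-T\|_2\le\tfrac12 h^2e^{h\nu}\|[A,B]\|_2$.

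Plugging this into the telescoped bound and using $mh=1$ (so $e^{(m-1)h\nu}e^{h\nu}=e^{mh\nu}=e^{\nu}$) together with $\tfrac12 mh^2=\tfrac1{2m}=2^{-n-1}$ gives $\|S^m-T^m\|_2\le 2^{-n-1}e^{\nu}\|[A,B]\|_2$, which is the asserted inequality. I expect the only genuinely delicate step to be the local estimate: the naive expansion via $\exp(sA)B\exp(-sA)-B=\int_0^s\exp(uA)[A,B]\exp(-uA)\,du$ drags in $\|\exp(-uA)\|_2$, which is not controlled by $\mu(A)$ and would spoil the bound with extra exponential growth; writing the commutator in the balanced form $\int_0^s\exp(uA)[A,B]\exp((s-u)A)\,du$ so that the exponents add up to $s\mu(A)$ is exactly what produces the clean factor $e^{\max(\mu(A+B),\mu(A)+\mu(B))}$ in the final bound.
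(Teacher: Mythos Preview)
Your argument is correct. The telescoping reduction to a one-step error, the Duhamel representation of $\Phi-\Psi$, and the balanced commutator identity $[\exp(sA),B]=\int_0^s\exp(uA)[A,B]\exp((s-u)A)\,du$ are all valid, and the arithmetic at the end ($mh=1$, $\tfrac12 mh^2=2^{-n-1}$) checks out. Your observation that the balanced form is essential---so that $\|\exp(uA)\|_2\|\exp((s-u)A)\|_2\le e^{s\mu(A)}$ rather than dragging in $\|\exp(-uA)\|_2$---is exactly the point of the estimate.

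As for comparison: the paper does not supply its own proof of this statement. It is quoted verbatim as ``Theorem 5 in \cite{MoLo:03}'' and used as an input to justify the accuracy of the alternative exponentiation scheme in Section~\ref{bliblibloublou}. So there is no in-paper argument to compare against; your proof is a self-contained verification of the cited result, and it follows essentially the same line as the original proof in Moler--Van Loan (telescoping plus a local $O(h^2)$ commutator estimate controlled by logarithmic norms).
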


\begin{Remark}[Generality]
    This exponentiation method based on corrections \eqref{fastMatrixExp} is not limited to the integration of \eqref{quasi-quadratic}, but works for repetitive exponentiations of any slowly varying matrix. It would also work for a set of matrices, as long as they could be indexed to ensure a slow variation.
\end{Remark}

For our case, $X_k$ and $A$ are identified with $N$ in Section \ref{gguqirehugqubliafd} at each timestep, and $B$ is identified as the difference in $N$'s between consecutive steps. Since $K(q^{slow})$ (and hence $N$ as well) is changing slowly, $\|B\|_2 \ll \|A\|_2$; furthermore, the calculation of $[A,B]$ (omitted; notice that $B$ is nilpotent) shows that $\|[A,B]\|\ll \|A\|$. Therefore, the error bound here \eqref{Lie-Trotter_bound_equation} is much smaller than that based on scaling and squaring for the same $n$. Consequently, we will be able to further decrease the value of $n$ by a few (not a lot because a decrease in $n$ exponentially increases the error).

The reason that we do not identify $X_k$ and $A$ with $\begin{bmatrix} -N^T & M_i \\ 0 & N \end{bmatrix}$ is due to a consideration of symplecticity in all variables, because otherwise $G_{2,k,i}$, obtained as the upper-right block of the exponential, will not be exactly the derivative of $F_{3,k}$. Instead, we let $G_{2,k,i}=-J \frac{\partial}{\partial q^{slow}_{k',i}} F_{3,k}$, where $F_{3,k}$ is updated from $F_{3,k-1}$ using \eqref{fastMatrixExp}. Taking the derivative, however, incurs additional computation, because $F_{3,k}$ now depends on not only $q^{slow}_k$ but also $q^{slow}_{k-1}$, and therefore $\partial q^{slow}_{k',i} / \partial q^{slow}_{(k-1)',j}$ has to be computed so that a chain rule applies to facilitate the computation. In the end, the computational saving based on updating the exponentiation becomes less significant due to the extra cost in updating $\partial q^{slow}_{k',i} / \partial q^{slow}_{(k-1)',j}$, but the implementation becomes more convoluted. We leave the details to the appendix.

\section{Analysis}
\subsection{Symplecticity}

For a concise writing, we carry out matrix analysis in block forms in this section. Coordinates are ordered as $q^{fast},p^{fast},q^{slow},p^{slow}$, and therefore $\mathbb{J}=\begin{bmatrix} J & 0 \\ 0 & J \end{bmatrix}$ is the coordinate representation of the canonical symplectic 2-form on the full phase space (abusing notations, we use $J:=\begin{bmatrix} 0 & I \\ -I & 0 \end{bmatrix}$ to represent the symplectic 2-form on both the fast subspace (for $q^{fast},p^{fast}$) and the slow subspace (for $q^{slow},p^{slow}$); this should not affect the clarity of the analysis). We also recall that a map $x\mapsto \phi(x)$ is symplectic if and only if $\phi'(x)^T \mathbb{J} \phi'(x)=\mathbb{J}$ or $\phi'(x)^T J \phi'(x)=J$ for all $x$'s (depending on whether $x$ represents all variables or only slow or fast variables).

\begin{lemma}
The numerical approximation to $\phi^3$ given by \eqref{phi3numerical} is symplectic on all variables if and only if $F_{3,k}$ is symplectic and, for $i=1,\ldots,d_s$, $G_{2,k,i}=-J \frac{\partial F_{3,k}}{\partial q_{k',i}^{slow}}$ (note that for a fixed $i$, $G_{2,k,i}$, $\frac{\partial F_{3,k}}{\partial q_{k',i}^{slow}}$ and $J$ are  $d_f \times d_f$ matrices).
\label{lemma_flowSymplecticity}
\end{lemma}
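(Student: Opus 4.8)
The plan is to differentiate the map \eqref{phi3numerical} directly, write its Jacobian in block form according to the splitting of the coordinates into $z:=(q^{fast},p^{fast})$, $q^{slow}$ and $p^{slow}$, impose the symplecticity condition $D\phi^T\mathbb{J}\,D\phi=\mathbb{J}$ blockwise, and show that the resulting block equations reduce to exactly the two conditions claimed.

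First I would record the derivative blocks of \eqref{phi3numerical}, abbreviating $F:=F_{3,k}$ and $G_i:=G_{2,k,i}$ (both functions of $q^{slow}_{k'}$) and $\partial_i:=\partial/\partial q^{slow}_{k',i}$. Since the update of $p^{slow}_i$ uses the quadratic form $z\mapsto z^TF^TG_iz$, only its symmetric part $\mathrm{sym}(F^TG_i):=\tfrac12(F^TG_i+G_i^TF)$ enters. The nonzero blocks are: $F$ in the $(z,z)$ slot; the matrix with $j$-th column $(\partial_jF)z$ in the $(z,q^{slow})$ slot; the matrix with $i$-th row $-z^T\mathrm{sym}(F^TG_i)$ in the $(p^{slow},z)$ slot; the matrix $S$ with $S_{ij}=-\tfrac12z^T\partial_j(F^TG_i)z$ in the $(p^{slow},q^{slow})$ slot; plus three identity blocks from $q^{slow}\mapsto q^{slow}$ and $\partial\tilde p^{slow}_i/\partial p^{slow}_j=\delta_{ij}$. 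Forming $D\phi^T\mathbb{J}\,D\phi$, with $\mathbb{J}$ acting as $J$ on the $z$-block and as the canonical pairing of $q^{slow}$ with $p^{slow}$, and equating with $\mathbb{J}$, the $(z,z)$ block gives $F^TJF=J$; the $(z,q^{slow})$ and $(q^{slow},z)$ blocks each give $\mathrm{sym}(F^TG_j)=-F^TJ\partial_jF$ for all $j$; the $(q^{slow},q^{slow})$ block gives $z^T(\partial_iF)^TJ(\partial_jF)z+S_{ij}-S_{ji}=0$ for all $z,i,j$; and the blocks touching a $p^{slow}$-column reproduce the corresponding blocks of $\mathbb{J}$ identically.

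The key step is to differentiate $F^TJF=J$ with respect to $q^{slow}_{k',j}$, obtaining $(\partial_jF)^TJF+F^TJ\partial_jF=0$; this makes $F^TJ\partial_jF$ symmetric and shows $\mathrm{sym}\bigl(F^T(-J\partial_jF)\bigr)=-F^TJ\partial_jF$. The middle block condition then reads $\mathrm{sym}\bigl(F^T(G_j+J\partial_jF)\bigr)=0$; using that $F^TG_j$ is symmetric for the $F_{3,k},G_{2,k,i}$ under consideration (the numerical counterpart of \eqref{symmetricMatrix}, valid for the output of Integrator \ref{fastMatrixExpScheme}) and that $F^TJ\partial_jF$ is symmetric, $F^T(G_j+J\partial_jF)$ is symmetric with zero symmetric part, hence vanishes, and invertibility of the symplectic $F$ forces $G_j=-J\partial_jF$. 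Substituting $G_i=-J\partial_iF$ back, $\partial_j(F^TG_i)=-(\partial_jF)^TJ\partial_iF-F^TJ\partial_i\partial_jF$, so the second-derivative terms in $S_{ij}-S_{ji}$ cancel because $\partial_i\partial_j=\partial_j\partial_i$, and using $J^T=-J$ once more the $(q^{slow},q^{slow})$ equation collapses to $0=0$; thus it is a consequence of the other two, not an independent constraint. The converse is the same computation run backwards: from $F^TJF=J$ and $G_i=-J\partial_iF$ one reads off $\mathrm{sym}(F^TG_i)=-F^TJ\partial_iF$ and the $(q^{slow},q^{slow})$ identity, so $D\phi^T\mathbb{J}\,D\phi=\mathbb{J}$.

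I expect the main difficulty to be bookkeeping rather than conceptual, with two places needing care. First, because the $p^{slow}$-updates are quadratic forms, the Jacobian only sees $\mathrm{sym}(F^TG_i)$; so the identification $G_i=-J\partial_iF$ is not forced by symplecticity alone but genuinely relies on the symmetry of $F^TG_i$ in the construction (equivalently, the stated conclusion is really a statement about $\mathrm{sym}(F_{3,k}^TG_{2,k,i})$). Second, the $(q^{slow},q^{slow})$-block equation superficially looks like a third, independent constraint; verifying that it is automatic once $F$ is symplectic and $G_i=-J\partial_iF$ requires tracking several factors of $J$ and transposes through the cancellation of the Hessian terms, and that is the step most prone to sign errors.
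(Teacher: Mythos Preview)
Your proposal is correct and follows essentially the same route as the paper's proof: compute the Jacobian of \eqref{phi3numerical} in block form, impose $D\phi^T\mathbb{J}\,D\phi=\mathbb{J}$ blockwise, extract $F^TJF=J$ from the fast block and the relation between $G_i$ and $\partial_iF$ from the mixed block (using the symmetry of $F^TG_i$ from the construction, exactly as the paper invokes \eqref{symmetricMatrix}/Lemma~\ref{lemma_symmetricMatrix}), and then verify that the $(q^{slow},q^{slow})$ block identity is a consequence of the first two via cancellation of the mixed second derivatives and $J^T=-J$. The only cosmetic difference is that you track $\mathrm{sym}(F^TG_i)$ explicitly and invoke the symmetry at the end, whereas the paper substitutes the symmetry into the Jacobian at the outset and works with $G_i^TF$ directly; your remark that symplecticity alone only pins down $\mathrm{sym}(F^TG_i)$ is a correct and useful clarification of the lemma's scope.
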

\begin{proof}
    For conciseness and convenient reading, write $q_{k'}^{fast}$ and $p_{k'}^{fast}$ as $q_f$ and $p_f$, $\partial/\partial q^{slow}_{k',i}$ as $\partial_i$, and $G_{2,k,i}$ and $F_{3,k}$ as $G_{2,i}$ and $F_3$ in this proof.

    The Jacobian of the numerical approximation to $\phi^3:q_{k'},p_{k'} \mapsto q_{k+1},p_{k+1}$ given by \eqref{phi3numerical} can be computed as:
    \begin{align}
        A=\begin{array}{c|c} F_{3} &
            \begin{array}{c|c} \partial_1 F_{3} \begin{pmatrix} q_f \\ p_f \end{pmatrix} ~ \cdots ~ \partial_{d_s} F_{3} \begin{pmatrix} q_f \\ p_f \end{pmatrix} & \begin{pmatrix} 0 & \cdots & 0 \\ 0 & \cdots & 0 \end{pmatrix} \end{array} \\
            \hline
            \begin{array}{c} \begin{pmatrix} 0 & 0 \\ \vdots & \vdots \\ 0 & 0 \end{pmatrix} \\ \hline -\begin{pmatrix} q_f^T & p_f^T \end{pmatrix} F_{3}^T G_{2,1} \\ \vdots \\ -\begin{pmatrix} q_f^T & p_f^T \end{pmatrix} F_{3}^T G_{2,d_s} \end{array} &
            \begin{array}{c|c} I & 0 \\ \hline -* & I \end{array}
        \end{array}
    \end{align}
    where $(*)_{i,j}=\frac{1}{2} [q_f;p_f]^T \partial_j (F_{3}^T G_{2,i}) [q_f; p_f]$, and the $0$'s in the upper right block, the lower left block, and the lower right block respectively corresponds to $d_f$-by-$1$, $1$-by-$d_f$, and $d_s$-by-$d_s$ zero matrices. Notice that we have $F_{3}^T G_{2,i}$ in the lower left block because $F_{3}^T G_{2,i}$ is symmetric (their exact values satisfy this because of \eqref{symmetricMatrix}, and their numerical approximations satisfy this because of Lemma \ref{lemma_symmetricMatrix}).

    Symplecticity is equivalent to $A^T\mathbb{J}A=\mathbb{J}$, whose left hand side writes out to be
    \noindent
    \begin{align}
        &~ A^T\mathbb{J}A =\begin{array}{c|c} F_3^T J &
                \begin{array}{c|c} G_{2,1}^T F_{3} \begin{pmatrix} q_f \\ p_f \end{pmatrix} ~ \cdots ~ G_{2,d_s}^T F_{3} \begin{pmatrix} q_f \\ p_f \end{pmatrix} & \begin{pmatrix} 0 & \cdots & 0 \\ 0 & \cdots & 0 \end{pmatrix} \end{array} \\
                \hline
                \begin{array}{c}
                    \begin{pmatrix} q_f^T & p_f^T \end{pmatrix} \partial_1 F_3^T J \\
                    \vdots \\
                    \begin{pmatrix} q_f^T & p_f^T \end{pmatrix} \partial_{d_s} F_3^T J \\
                    \hline
                    \begin{pmatrix} 0 & 0 \\ \vdots & \vdots \\ 0 & 0 \end{pmatrix}
                \end{array}
                &
                \begin{array}{c|c} *^T & I \\ \hline -I & 0 \end{array}
            \end{array} \times A \nonumber\\
            &= \begin{array}{c|c} F_3^T J F_3 + 0 &
                \begin{array}{c|c}
                    (F_{3}^T J \partial_1 F_{3}+G_{2,1}^T F_{3}) \begin{pmatrix} q_f \\ p_f \end{pmatrix} ~ \cdots ~ (F_{3}^T J \partial_{d_s} F_{3}+G_{2,d_s}^T F_{3}) \begin{pmatrix} q_f \\ p_f \end{pmatrix} &
                    \begin{pmatrix} 0 & \cdots & 0 \\ 0 & \cdots & 0 \end{pmatrix}
                \end{array} \\
               \hline
                \triangle &
                \begin{array}{c|c}
                    \left( [q_f;p_f]^T \partial_i F_3^T J \partial_j F_3 [q_f;p_f] \right)_{i=1,\ldots,d_s;j=1,\ldots,d_s}
                    & 0 \\ \hline 0 & 0
                \end{array} +
                \begin{array}{c|c} -*^T+* & I \\ \hline -I & 0 \end{array}
            \end{array}
    \end{align}
    where $\triangle$ is naturally negative the transpose of the upper-right block because $A^T \mathbb{J} A$ is skew-symmetric for any $A$.

    This is equal to $\mathbb{J}$ if and only if the upper-left block and the bottom-right block are both $J$ and the upper-right block and the bottom-left block are both zero. The requirement on upper-left block is
    \begin{equation}
        F_3^T J F_3 = J
        \label{gqagrdsfkljaldg}
    \end{equation}
    By the arbitrariness of $q_f$ and $p_f$, the requirement on upper-right and bottom-left blocks translates to:
    \begin{equation}
        F_3^T J \partial_i F_3 + G_{2,i}^T F_3 = 0
    \end{equation}
    which further simplifies to
    \begin{equation}
        G_{2,i}=-J \partial_i F_3
        \label{iffSymplectic}
    \end{equation}
    because $F_3^T J \partial_i F_3 = \partial_i(F_3^T J F_3)-\partial_i F_3^T J F_3=-\partial_i F_3^T J F_3$, $F_3$ is invertible due to \eqref{gqagrdsfkljaldg}, and $J^T=-J$.

    The bottom-right block needs to be $J$, and this requirement is equivalent to
    \begin{equation}
        [q_f;p_f]^T \left(\partial_i F_3^T J \partial_j F_3+\frac{1}{2}\partial_i(F_3^T G_{2,j})-\frac{1}{2}\partial_j(F_3^T G_{2,i})\right)[q_f;p_f]=0
        \label{o7tyhoquilbvbilrfquefqhfi}
    \end{equation}
    By \eqref{iffSymplectic}, the above left hand side rewrites as
    \begin{align}
        &~ [q_f;p_f]^T \left( \partial_i F_3^T J \partial_j F_3
        -\frac{1}{2}\partial_i F_3^T J \partial_j F_3-\frac{1}{2} F_3^T J \partial_i \partial_j F_3
        +\frac{1}{2}\partial_j F_3^T J \partial_i F_3+\frac{1}{2} F_3^T J \partial_j \partial_i F_3 \right) [q_f;p_f] \nonumber\\
        &= [q_f;p_f]^T \left( \frac{1}{2}\partial_i F_3^T J \partial_j F_3 \right) [q_f;p_f] + [q_f;p_f]^T \left( \frac{1}{2}\partial_j F_3^T J \partial_i F_3 \right) [q_f;p_f]
    \end{align}
    Since what are summed up above are just two real numbers, the second number remains the same after taking its transpose, which due to $J^T=-J$ yields
    \begin{equation}
        [q_f;p_f]^T \left( \frac{1}{2}\partial_j F_3^T J \partial_i F_3 \right) [q_f;p_f]=-[q_f;p_f]^T \left( \frac{1}{2}\partial_i F_3^T J \partial_j F_3 \right) [q_f;p_f]
    \end{equation}
    Therefore, \eqref{o7tyhoquilbvbilrfquefqhfi} does hold. $\square$
\end{proof}

\begin{lemma}
    In Integrator \ref{fastMatrixExpScheme}, all $A_k$ and $C_k$ are symplectic; moreover, all $F_{2,k}$ and $F_{3,k}$ are symplectic, too.
    \label{lemma_symplecticity}
\end{lemma}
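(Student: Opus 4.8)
The plan is to prove Lemma \ref{lemma_symplecticity} in two stages, treating first the single-step matrices $A_k$ and $C_k$, and then using a semigroup-type argument to propagate symplecticity through the squaring iteration of step 2 of Integrator \ref{fastMatrixExpScheme}. Throughout, recall that a $2d_f\times 2d_f$ matrix $S$ is symplectic iff $S^T J S = J$, that the product of symplectic matrices is symplectic, and that the transpose-inverse of a symplectic matrix is symplectic.

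\emph{Step 1: $C_k$ is symplectic.} First I would observe that $C_k$ is precisely the one-step propagator of Velocity-Verlet applied to $\dot x = y$, $\dot y = -\epsilon^{-1}K_k x$ with timestep $h$, as already noted in the text around \eqref{symplecticCVerlet}. The cleanest argument is to write $C_k$ as the explicit composition of the three elementary shears
\[
C_k = \begin{bmatrix} I & \frac h2 I \\ 0 & I \end{bmatrix}\begin{bmatrix} I & 0 \\ -h\epsilon^{-1}K_k & I \end{bmatrix}\begin{bmatrix} I & \frac h2 I \\ 0 & I \end{bmatrix},
\]
each factor of which is manifestly symplectic since $\epsilon^{-1}K_k$ is symmetric (so that $\begin{bmatrix} I & \frac h2 I \\ 0 & I\end{bmatrix}^T J \begin{bmatrix} I & \frac h2 I \\ 0 & I\end{bmatrix} = J$, and likewise for the lower-triangular factor). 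Multiplying the three $2\times 2$ block matrices out and collecting terms gives exactly the expression \eqref{Definition_Ck}; hence $C_k$ is symplectic as a product of symplectic matrices. Alternatively, one can just verify $C_k^T J C_k = J$ by direct block computation, using only $K_k = K_k^T$; the $h^2/4$ correction in the off-diagonal block is exactly what is needed for the cross terms to cancel.

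\emph{Step 2: $A_k$ is symplectic.} Here I would note that $A_k$ is the Verlet propagator for the ``adjoint'' system appearing in \eqref{symplecticAVerlet}, i.e. it corresponds to $N \mapsto -N^T$; concretely $A_k = C_k'$ with the roles of the two blocks swapped and $K_k$ replaced by $K_k^T = K_k$. The slick route is to relate $A_k$ to $C_k$ directly: since the upper-left block of the exact exponential \eqref{pureMatrixExponentiation} is $\exp(-N^T H) = (\exp(N H)^T)^{-1}$, one expects $A_k = (C_k^T)^{-1}$ up to the discretization, and in fact a short computation shows $A_k = P\,(C_k^T)^{-1}P^{-1}$ for a fixed symplectic permutation $P$ (swapping position and momentum blocks), or even more simply that $A_k = J^{-1}(C_k^{-1})^T J$ — but the safest thing for the paper is just to exhibit $A_k$ as a product of three elementary symplectic shears analogous to the factorization above (now with the shear structure transposed) and observe the block multiplication reproduces \eqref{Definition_Ak}. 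Either way $A_k$ is symplectic, again using only symmetry of $K_k$.

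\emph{Step 3: propagate through the squaring.} Finally, $F_{2,k} = F_{2,k}^{n+1} = A_k^{2^n}$ and $F_{3,k} = F_{3,k}^{n+1} = C_k^{2^n}$, since the iteration in step 2 of Integrator \ref{fastMatrixExpScheme} squares the diagonal blocks at each stage (the off-diagonal block $G_{2,k,i}$ does not enter the recursion for the diagonal blocks). A power of a symplectic matrix is symplectic, so from Steps 1--2 we conclude $F_{2,k}$ and $F_{3,k}$ are symplectic. The main obstacle is really just Steps 1--2 — the verification of the $2\times 2$ block identities — and there the only subtlety is bookkeeping the higher-order corrections ($h^2/2$, $h^3/4$, $h^2/4$) and remembering to invoke $K_k = K_k^T$ at each cancellation; the shear-factorization viewpoint makes this essentially automatic and avoids grinding through the cross terms by hand. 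No hypothesis beyond positive-definiteness (in fact only symmetry) of $K_k$ is needed for this lemma, and $\epsilon$ and $h$ play no role beyond being scalars.
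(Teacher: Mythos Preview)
Your proposal is correct. The paper's own proof is even more terse: it simply says that a direct block computation verifies $A_k^T J A_k = J$ and $C_k^T J C_k = J$, and then invokes closure of the symplectic group under products to handle $F_{2,k}=A_k^{2^n}$ and $F_{3,k}=C_k^{2^n}$ --- exactly your Step~3. Your shear factorizations
\[
C_k=\begin{bmatrix}I&\tfrac h2 I\\0&I\end{bmatrix}\begin{bmatrix}I&0\\-h\epsilon^{-1}K_k&I\end{bmatrix}\begin{bmatrix}I&\tfrac h2 I\\0&I\end{bmatrix},\qquad
A_k=\begin{bmatrix}I&0\\-\tfrac h2 I&I\end{bmatrix}\begin{bmatrix}I&h\epsilon^{-1}K_k\\0&I\end{bmatrix}\begin{bmatrix}I&0\\-\tfrac h2 I&I\end{bmatrix}
\]
are a cleaner packaging of the same computation (each factor is a symplectic shear because $K_k=K_k^T$), and they make transparent why the $h^2/4$ correction appears. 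One small remark: your speculation that $A_k=(C_k^T)^{-1}$ ``up to the discretization'' is in fact exact --- this is precisely Lemma~\ref{lemma_reversible} of the paper ($A_k^T C_k=I$) --- so you could alternatively deduce symplecticity of $A_k$ from that of $C_k$ without a separate factorization; the detour through $P$ or $J^{-1}(\cdot)J$ is unnecessary.
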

\begin{proof}
    Straightforward computation using \eqref{Definition_Ak} and \eqref{Definition_Ck} shows that $A_k^T J A_k=J$ and $C_k^T J C_k=J$. Moreover, since the product of symplectic matrices is symplectic, all $F_{2,k}$ and $F_{3,k}$, being powers of $A_k$ and $C_k$, are symplectic. $\square$
\end{proof}

\begin{lemma}
    In Integrator \ref{fastMatrixExpScheme}, $A_k^T C_k=I$ (and equivalently $C_k A_k^T=I$) for all $k$; moreover, $F_{2,k}^T F_{3,k}=I$ (and equivalently $F_{3,k} F_{2,k}^T=I$).
    \label{lemma_reversible}
\end{lemma}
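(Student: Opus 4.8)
The plan is to establish the block identity $A_k^T C_k = I$ by direct multiplication, using the fact that every block of $A_k$ and $C_k$ is a polynomial in the single symmetric matrix $K_k$, and then to push this relation through the repeated squaring of Step \ref{exponential_repeat} by a one-line induction.

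First I would abbreviate the three block entries, setting $P := I - \epsilon^{-1} K_k \frac{h^2}{2}$, $R := \epsilon^{-1} K_k h$, and $S := h\bigl(I - \epsilon^{-1} K_k \frac{h^2}{4}\bigr)$, so that by \eqref{Definition_Ak} and \eqref{Definition_Ck},
\[ A_k = \begin{bmatrix} P & R \\ -S & P \end{bmatrix}, \qquad C_k = \begin{bmatrix} P & S \\ -R & P \end{bmatrix}, \qquad A_k^T = \begin{bmatrix} P & -S \\ R & P \end{bmatrix}, \]
where the last equality uses that $K_k = K(q^{slow}_{k'})$ is symmetric, hence so are $P$, $R$, $S$. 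These three matrices are all polynomials in $K_k$ and therefore commute pairwise. Block multiplication then gives
\[ A_k^T C_k = \begin{bmatrix} P^2 + SR & PS - SP \\ RP - PR & P^2 + RS \end{bmatrix}, \]
in which the off-diagonal blocks vanish by commutativity and the diagonal blocks reduce to the scalar-polynomial identity $P^2 + SR = I$, i.e. the cancellation $\bigl(I - \epsilon^{-1}K_k \tfrac{h^2}{2}\bigr)^2 + h\bigl(I - \epsilon^{-1}K_k\tfrac{h^2}{4}\bigr)\epsilon^{-1}K_k h = I - \epsilon^{-1}K_k h^2 + \epsilon^{-2}K_k^2\tfrac{h^4}{4} + \epsilon^{-1}K_k h^2 - \epsilon^{-2}K_k^2\tfrac{h^4}{4} = I$. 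This proves $A_k^T C_k = I$, which, since $A_k$ and $C_k$ are square, is equivalent to $C_k A_k^T = I$. Conceptually this is the discrete analog of the continuous relation $F_2(H)^T F_3(H) = \exp(-NH)\exp(NH) = I$ implicit in \eqref{ghfdgd454d54d4}: the same $\mathcal{O}(h^2)$ and $\mathcal{O}(h^3)$ corrections engineered to make $A_k$, $C_k$ symplectic in Lemma \ref{lemma_symplecticity} also make them mutually adjoint.

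For the second claim, recall from Step \ref{exponential_repeat} that $F_{2,k} = A_k^{2^n}$ and $F_{3,k} = C_k^{2^n}$, built by $F_{2,k}^{j+1} = F_{2,k}^j F_{2,k}^j$, $F_{3,k}^{j+1} = F_{3,k}^j F_{3,k}^j$. I would prove by induction on $j$ that $(F_{2,k}^j)^T F_{3,k}^j = I$: the base case $j=1$ is $A_k^T C_k = I$ just shown, and for the inductive step
\[ (F_{2,k}^{j+1})^T F_{3,k}^{j+1} = (F_{2,k}^j)^T (F_{2,k}^j)^T F_{3,k}^j F_{3,k}^j = (F_{2,k}^j)^T \bigl[(F_{2,k}^j)^T F_{3,k}^j\bigr] F_{3,k}^j = (F_{2,k}^j)^T F_{3,k}^j = I, \]
using the induction hypothesis twice. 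Taking $j = n+1$ yields $F_{2,k}^T F_{3,k} = I$, hence also $F_{3,k} F_{2,k}^T = I$ by squareness. Equivalently and more compactly, $A_k^T C_k = I$ says $A_k^T = C_k^{-1}$, so $F_{2,k}^T = (A_k^T)^{2^n} = (C_k^{-1})^{2^n} = (C_k^{2^n})^{-1} = F_{3,k}^{-1}$.

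There is no genuine obstacle here; the only points to be careful about are recording that the symmetry of $K_k$ is what gives $A_k^T$ its clean block form, and that $P$, $R$, $S$ commute, which is precisely what kills the off-diagonal blocks. The cancellation $P^2 + SR = I$ is the same one already used (merely rearranged) in verifying $C_k^T J C_k = J$ in Lemma \ref{lemma_symplecticity}, so no new computation is really needed.
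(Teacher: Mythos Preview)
Your proof is correct and follows exactly the paper's approach: the paper simply says ``straightforward computation using \eqref{Definition_Ak} and \eqref{Definition_Ck} shows that $A_k^T C_k=I$'' and then runs the same one-line induction $(A_k A_k)^T C_k C_k = A_k^T I C_k = I$ that you wrote out. Your version is just the spelled-out form of that computation, with the helpful observation that the blocks $P,R,S$ commute as polynomials in $K_k$.
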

\begin{proof}
    Straightforward computation using \eqref{Definition_Ak} and \eqref{Definition_Ck} shows that $A_k^T C_k=I$. Therefore, $(A_k A_k)^T C_k C_k=A_k^T I C_k=I$, and by induction $(A_k^{2^n})^T C_k^{2^n}=I$, i.e., $F_{2,k}^T F_{3,k}=I$.
\end{proof}

\begin{lemma}
In Integrator \ref{fastMatrixExpScheme}, $B_{k,i}=-J \frac{\partial}{\partial q^{slow}_{k',i}} C_k$ for all $k$ and $i$, and $G_{2,k,i}=-J \frac{\partial}{\partial q^{slow}_{k',i}} F_{3,k}$ for all $k$ and $i$.
\label{lemma_flowPrimaryFunction}
\end{lemma}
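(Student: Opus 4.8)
The plan is to establish the two displayed identities in order: the first, $B_{k,i}=-J\,\partial_i C_k$, by a direct block computation from the definitions, and the second, $G_{2,k,i}=-J\,\partial_i F_{3,k}$, by induction on the squaring index $j$ of Integrator \ref{fastMatrixExpScheme}.

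First I would verify $B_{k,i}=-J\,\partial_i C_k$ directly. Since $h=H/2^n$ is a constant and $C_k$ depends on $q^{slow}_{k'}$ only affinely through $K_k$ (see \eqref{Definition_Ck}), the matrix $\partial_i C_k$ is obtained by replacing $K_k$ with $\partial_i K_k$ and deleting the constant ($I$) blocks, giving $\partial_i C_k=\begin{bmatrix}-\epsilon^{-1}\partial_i K_k\tfrac{h^2}{2} & -\epsilon^{-1}\partial_i K_k\tfrac{h^3}{4}\\ -\epsilon^{-1}\partial_i K_k h & -\epsilon^{-1}\partial_i K_k\tfrac{h^2}{2}\end{bmatrix}$. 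Left-multiplying by $-J=\begin{bmatrix}0 & -I\\ I & 0\end{bmatrix}$ swaps and negates the block rows, and the result is exactly \eqref{Definition_Bk}, i.e. $-J\,\partial_i C_k=B_{k,i}$.

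Next I would record the commutation identity $J C_k=A_k J$, again by a one-line block computation from \eqref{Definition_Ak} and \eqref{Definition_Ck}: both sides equal $\begin{bmatrix}-\epsilon^{-1}K_k h & I-\epsilon^{-1}K_k\tfrac{h^2}{2}\\ -(I-\epsilon^{-1}K_k\tfrac{h^2}{2}) & -h(I-\epsilon^{-1}K_k\tfrac{h^2}{4})\end{bmatrix}$ (conceptually this is because $A_k$ is the same one-step Verlet map as $C_k$ but for the conjugate generator $-N^T=JNJ^{-1}$). Since the squaring recursion in step 2 gives $F_{3,k}^{j}=C_k^{2^{j-1}}$ and $F_{2,k}^{j}=A_k^{2^{j-1}}$, iterating $J C_k=A_k J$ yields $J F_{3,k}^{j}=F_{2,k}^{j}J$ for every $j$.

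Finally I would prove $G_{2,k,i}^{j}=-J\,\partial_i F_{3,k}^{j}$ by induction on $j$; the claim then follows at $j=n+1$. The base case $j=1$ is precisely $B_{k,i}=-J\,\partial_i C_k$. For the inductive step, apply the Leibniz rule to $F_{3,k}^{j+1}=F_{3,k}^{j}F_{3,k}^{j}$, so $-J\,\partial_i F_{3,k}^{j+1}=-J F_{3,k}^{j}(\partial_i F_{3,k}^{j})-J(\partial_i F_{3,k}^{j})F_{3,k}^{j}$; rewrite the first term using $J F_{3,k}^{j}=F_{2,k}^{j}J$ as $F_{2,k}^{j}(-J\,\partial_i F_{3,k}^{j})=F_{2,k}^{j}G_{2,k,i}^{j}$ and the second as $(-J\,\partial_i F_{3,k}^{j})F_{3,k}^{j}=G_{2,k,i}^{j}F_{3,k}^{j}$, both by the inductive hypothesis. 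Their sum is $F_{2,k}^{j}G_{2,k,i}^{j}+G_{2,k,i}^{j}F_{3,k}^{j}=G_{2,k,i}^{j+1}$, which is exactly the update rule of step 2. The only non-mechanical ingredient — and the step I would flag as the crux — is the commutation relation $J F_{3,k}^{j}=F_{2,k}^{j}J$: without it the spurious factor $F_{2,k}^{j}$ appearing in the squaring recursion cannot be absorbed into a derivative of $F_{3,k}$. Once that identity is in hand, the rest is routine differentiation, and symmetry of $F_{3,k}^T G_{2,k,i}$ needed elsewhere (Lemma \ref{lemma_symmetricMatrix}) is not even invoked here.
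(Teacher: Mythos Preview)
Your proof is correct. The overall architecture—direct block computation for $B_{k,i}=-J\,\partial_i C_k$, followed by induction along the squaring recursion for $G_{2,k,i}=-J\,\partial_i F_{3,k}$—matches the paper's. The one genuine difference is the key identity used in the inductive step: you verify the conjugacy $JC_k=A_kJ$ directly by a single block computation and then iterate it to $JF_{3,k}^{j}=F_{2,k}^{j}J$, whereas the paper reaches the equivalent statement $A_kB_{k,i}=-JC_k\,\partial_i C_k$ by combining two previously established lemmas, reversibility $A_k^TC_k=I$ (Lemma \ref{lemma_reversible}) and symplecticity $C_k^TJC_k=J$ (Lemma \ref{lemma_symplecticity}). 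In fact your identity is exactly the conjunction of those two: from $C_k^TJC_k=J$ one gets $(C_k^T)^{-1}=JC_kJ^{-1}$, and from $A_k^TC_k=I$ one gets $A_k=(C_k^T)^{-1}$, hence $A_kJ=JC_k$. So your route is self-contained and avoids citing the auxiliary lemmas, while the paper's route reuses structure it has already built; neither is more general, just differently packaged.
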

\begin{proof}
    Use the short-hand notation $\partial_i := \frac{\partial}{\partial q^{slow}_{k,i}}$. Straightforward computation using \eqref{Definition_Bk} and \eqref{Definition_Ck} shows that $B_{k,i}=-J \partial_i C_k$ for all $k$ and $i$.

    Since $\begin{bmatrix} F_{2,k} & G_{2,k,i} \\ 0 & F_{3,k} \end{bmatrix} = \begin{bmatrix} A_k & B_{k,i} \\ 0 & C_k \end{bmatrix}^{2^n}$ for all $i$, by induction, it is only necessary to prove that $G_{2,k,i}=-J\partial_i F_{3,k}$ when $n=1$. In this case, $G_{2,k,i}=A_k B_{k,i}+B_{k,i} C_k$ and $F_{3,k}=C_k C_k$, and the equality can be proved by the following:

    Because $B_{k,i}=-J \partial_i C_k$, $C_k^T A_k=I$ (Lemma \ref{lemma_reversible}) and $J=C_k^T J C_k$ (Lemma \ref{lemma_symplecticity}), we have
    \begin{equation}
        C_k^T A_k B_{k,i} = -C_k^T J C_k \partial_i C_k
    \end{equation}
    Since symplectic matrix is nonsingular, this is
    \begin{equation}
        A_k B_{k,i} = -J C_k \partial_i C_k
    \end{equation}
    Adding $B_{k,i} C_k=-J \partial_i C_k C_k$, we have
    \begin{equation}
        A_k B_{k,i}+B_{k,i} C_k=-J\partial_i (C_k C_k)
    \end{equation}
    Hence, the induction works. $\square$
\end{proof}

\begin{lemma}
    In Integrator \ref{fastMatrixExpScheme}, $C_k^T B_{k,i}=B_{k,i}^T C_k$ for all $k$ and $i$.
\label{lemma_symmetricMatrix_sub}
\end{lemma}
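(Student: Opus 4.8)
The plan is to verify the claimed identity $C_k^T B_{k,i} = B_{k,i}^T C_k$ by a direct block computation, exactly as in the proofs of Lemmas~\ref{lemma_symplecticity}--\ref{lemma_flowPrimaryFunction}, but I would first try to avoid the brute-force route by assembling it from facts already established. Writing $\partial_i$ for $\partial/\partial q^{slow}_{k',i}$, we have $B_{k,i} = -J\,\partial_i C_k$ (Lemma~\ref{lemma_flowPrimaryFunction}), $C_k^T A_k = I$ and $C_k A_k^T = I$ (Lemma~\ref{lemma_reversible}), and $C_k^T J C_k = J$, $A_k^T J A_k = J$ (Lemma~\ref{lemma_symplecticity}). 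Differentiating the symplecticity relation $C_k^T J C_k = J$ in the variable $q^{slow}_{k',i}$ gives $(\partial_i C_k)^T J C_k + C_k^T J (\partial_i C_k) = 0$, i.e. $C_k^T J (\partial_i C_k)$ is skew-symmetric.

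The key step is then to rewrite both sides of the target identity in terms of $C_k^T J (\partial_i C_k)$. Using $B_{k,i} = -J\,\partial_i C_k$ and $J^T = -J$, we get $C_k^T B_{k,i} = -C_k^T J\,\partial_i C_k$, and $B_{k,i}^T C_k = -(\partial_i C_k)^T J^T C_k = (\partial_i C_k)^T J C_k = -(C_k^T J\,\partial_i C_k)^T$, where the last equality uses $(J)^T=-J$ again. So the identity $C_k^T B_{k,i} = B_{k,i}^T C_k$ is precisely the statement that $C_k^T J\,\partial_i C_k$ equals its own negative transpose, which is exactly the skew-symmetry relation obtained by differentiating $C_k^T J C_k = J$. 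This closes the proof without any explicit $4d_f$-block arithmetic.

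I would present the argument in that order: (i) recall $B_{k,i}=-J\,\partial_i C_k$ and $C_k^T J C_k = J$ from the earlier lemmas; (ii) differentiate the latter to obtain skew-symmetry of $C_k^T J\,\partial_i C_k$; (iii) substitute and use $J^T=-J$ to identify $C_k^T B_{k,i}$ and $B_{k,i}^T C_k$ as $-C_k^T J\,\partial_i C_k$ and $-(C_k^T J\,\partial_i C_k)^T$ respectively; (iv) conclude. As a fallback in case I have mis-tracked a transpose, the brute-force verification is always available: substitute \eqref{Definition_Ck} and \eqref{Definition_Bk}, expand $C_k^T B_{k,i}$ and $B_{k,i}^T C_k$ as $2\times 2$ block products in $d_f\times d_f$ blocks, and check that the four blocks agree, using only that $K_k$ and $\partial_i K_k$ are symmetric (so $K_k^T=K_k$, $(\partial_i K_k)^T=\partial_i K_k$, and these two matrices need not commute).

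The main obstacle I anticipate is purely bookkeeping: keeping the transposes and the sign from $J^T=-J$ straight, and making sure the differentiation of $C_k^T J C_k = J$ is applied with $C_k$ genuinely depending on $q^{slow}_{k',i}$ through $K_k = K(q^{slow}_{k'})$ — which it does, since $\partial_i C_k$ is exactly the quantity appearing in \eqref{Definition_Bk}. No genuine analytic difficulty is expected; this lemma is a structural consequence of symplecticity of $C_k$ together with the definition $B_{k,i}=-J\,\partial_i C_k$, and it is the natural ``numerical'' analogue of the symmetry \eqref{symmetricMatrix} of $F_3(H)^T G_{2,i}(H)$ in the exact flow, which it will presumably be used to establish at the squared level (consistently with the remark in the proof of Lemma~\ref{lemma_flowSymplecticity} that the numerical $F_{3,k}^T G_{2,k,i}$ is symmetric by Lemma~\ref{lemma_symmetricMatrix}).
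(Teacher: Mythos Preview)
Your approach is genuinely different from the paper's and, once a sign slip is fixed, it works. The paper simply says ``straightforward computation using \eqref{Definition_Bk} and \eqref{Definition_Ck}'' and leaves it at that---your brute-force fallback. Your main argument instead derives the lemma structurally from Lemmas~\ref{lemma_symplecticity} and~\ref{lemma_flowPrimaryFunction}, which is more informative: it shows that the symmetry of $C_k^T B_{k,i}$ is not an accident of the particular Verlet formulas but a consequence of $C_k$ being symplectic and $B_{k,i}$ being $-J\,\partial_i C_k$. That is a nicer explanation and would apply to any other base approximation with those two properties.

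There is, however, a double sign slip you should correct. Differentiating $C_k^T J C_k = J$ gives $(\partial_i C_k)^T J C_k + C_k^T J\,\partial_i C_k = 0$; combined with $\big(C_k^T J\,\partial_i C_k\big)^T = -(\partial_i C_k)^T J C_k$, this shows $C_k^T J\,\partial_i C_k$ is \emph{symmetric}, not skew-symmetric. Likewise, from $C_k^T B_{k,i} = -C_k^T J\,\partial_i C_k$ and $B_{k,i}^T C_k = -(C_k^T J\,\partial_i C_k)^T$, the target identity is equivalent to $C_k^T J\,\partial_i C_k$ being \emph{symmetric}, not to it equalling its own negative transpose. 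The two errors cancel, so your conclusion is right, but you should fix the wording in steps~(ii)--(iv) before writing it up. (You anticipated exactly this kind of bookkeeping hazard.) There is no circularity: Lemma~\ref{lemma_flowPrimaryFunction} establishes $B_{k,i}=-J\,\partial_i C_k$ by direct computation at the base level and does not invoke Lemma~\ref{lemma_symmetricMatrix_sub}.
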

\begin{proof}
    This can be shown by straightforward computation using \eqref{Definition_Bk} and \eqref{Definition_Ck}.
\end{proof}

\begin{lemma}
    In Integrator \ref{fastMatrixExpScheme}, $F_{3,k}^T G_{2,k,i}=G_{2,k,i}^T F_{3,k}$ for all $k$ and $i$.
\label{lemma_symmetricMatrix}
\end{lemma}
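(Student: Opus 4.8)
The plan is to recognize the claimed identity as one half of a symplectic-type relation that is automatically inherited under repeated squaring, so that it suffices to verify it for the single building block $\begin{bmatrix} A_k & B_{k,i} \\ 0 & C_k \end{bmatrix}$ and then propagate it through the $n$ squarings of Integrator \ref{fastMatrixExpScheme}.

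Concretely, I would let $\hat J := \begin{bmatrix} 0 & I_{2d_f} \\ -I_{2d_f} & 0 \end{bmatrix}$ be the canonical $4d_f\times 4d_f$ symplectic matrix (blocks of size $2d_f$, since $F_{3,k}$ and $G_{2,k,i}$ are $2d_f\times 2d_f$), and consider the set $\mathcal{G}$ of block upper-triangular matrices $\begin{bmatrix} U & V \\ 0 & Z \end{bmatrix}$ with $U^T Z = I$ and $Z^T V = V^T Z$. A short block computation shows that $P = \begin{bmatrix} U & V \\ 0 & Z \end{bmatrix}$ lies in $\mathcal{G}$ if and only if $P^T \hat J P = \hat J$; thus $\mathcal{G}$ is the intersection of the symplectic group with the set of block upper-triangular matrices (which is closed under multiplication), and is in particular closed under multiplication and hence under squaring. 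If one prefers to avoid the group-theoretic phrasing, the same closure is verified directly: for $P_1,P_2\in\mathcal{G}$ the product is again block upper-triangular with diagonal blocks $U_1U_2$ and $Z_1Z_2$ satisfying $(U_1U_2)^T(Z_1Z_2)=U_2^T(U_1^TZ_1)Z_2=U_2^TZ_2=I$, and with off-diagonal block $U_1V_2+V_1Z_2$ satisfying $(Z_1Z_2)^T(U_1V_2+V_1Z_2)=Z_2^T(Z_1^TU_1)V_2+Z_2^T(Z_1^TV_1)Z_2=Z_2^TV_2+Z_2^T(Z_1^TV_1)Z_2$, which is symmetric because $Z_2^TV_2$ is symmetric by hypothesis and the congruence $Z_2^T(\cdot)Z_2$ preserves the symmetry of $Z_1^TV_1$.

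With this in place, the first step is to show $\begin{bmatrix} A_k & B_{k,i} \\ 0 & C_k \end{bmatrix}\in\mathcal{G}$: the relation $A_k^T C_k = I$ is precisely Lemma \ref{lemma_reversible}, and the relation $C_k^T B_{k,i} = B_{k,i}^T C_k$ is precisely Lemma \ref{lemma_symmetricMatrix_sub}. The second step is to apply closure of $\mathcal{G}$ under squaring $n$ times, using the recursion of Integrator \ref{fastMatrixExpScheme}, to conclude $\begin{bmatrix} F_{2,k} & G_{2,k,i} \\ 0 & F_{3,k} \end{bmatrix} = \begin{bmatrix} A_k & B_{k,i} \\ 0 & C_k \end{bmatrix}^{2^n}\in\mathcal{G}$, whose second defining relation reads exactly $F_{3,k}^T G_{2,k,i} = G_{2,k,i}^T F_{3,k}$, as claimed.

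I do not expect a genuine obstacle here beyond routine block-matrix bookkeeping. If one unrolls the argument as an explicit induction on the squaring index $j$ (simultaneously proving $(F_{2,k}^j)^T F_{3,k}^j = I$ and the symmetry of $(F_{3,k}^j)^T G_{2,k,i}^j$), the only point that needs a moment's care is carrying the reversibility relation $(F_{2,k}^j)^T F_{3,k}^j = I$ along the induction, since it is exactly what collapses the cross term $(F_{3,k}^j)^T(F_{3,k}^j)^T F_{2,k}^j G_{2,k,i}^j$ arising from $G_{2,k,i}^{j+1} = F_{2,k}^j G_{2,k,i}^j + G_{2,k,i}^j F_{3,k}^j$ down to $(F_{3,k}^j)^T G_{2,k,i}^j$, after which the remaining term $(F_{3,k}^j)^T(F_{3,k}^j)^T G_{2,k,i}^j F_{3,k}^j$ is seen to be symmetric using the inductive symmetry of $(F_{3,k}^j)^T G_{2,k,i}^j$.
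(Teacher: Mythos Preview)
Your proposal is correct and follows essentially the same approach as the paper: both arguments use Lemma~\ref{lemma_reversible} and Lemma~\ref{lemma_symmetricMatrix_sub} for the base case and then propagate the two relations $U^T Z = I$ and $Z^T V = V^T Z$ simultaneously through the repeated squaring. Your packaging of these two relations as membership in the block upper-triangular symplectic group $\mathcal{G}$ is a clean conceptual abstraction of the paper's direct $n=1$ computation, but the underlying computation is identical.
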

\begin{proof}
    By Lemma \ref{lemma_symmetricMatrix_sub}, $C_k^T B_{k,i}=B_{k,i}^T C_k$ for all $k$ and $i$. By Lemma \ref{lemma_reversible}, $A_k^T C_k=I$ and $C_k^T A_k=I$.

    Since $\begin{bmatrix} F_{2,k} & G_{2,k,i} \\ 0 & F_{3,k} \end{bmatrix} = \begin{bmatrix} A_k & B_{k,i} \\ 0 & C_k \end{bmatrix}^{2^n}$ for all $i$, by induction, it is only necessary to prove that $F_{3,k}^T G_{2,k,i}=G_{2,k,i}^T F_{3,k}$ when $n=1$. In this case, $G_{2,k,i}=A_k B_{k,i}+B_{k,i} C_k$ and $F_{3,k}=C_k C_k$, and this equality can be proved upon observing for all $i$:
    \begin{align}
        &~ C_k^T C_k^T (A_k B_{k,i} + B_{k,i} C_k)
        = C_k^T B_{k,i} + C_k^T C_k^T B_{k,i} C_k
        = B_{k,i}^T C_k + C_k^T B_{k,i}^T C_k C_k \nonumber\\
        &= B_{k,i}^T A_k^T C_k C_k + C_k^T B_{k,i}^T C_k C_k
        = (A_k B_{k,i} + B_{k,i} C_k)^T C_k C_k
    \end{align}
\end{proof}

\begin{theorem}
    The proposed method (Integrator \ref{extended_SIM}+\ref{fastMatrixExpScheme}) is symplectic on all variables.
    \label{symplecticity}
\end{theorem}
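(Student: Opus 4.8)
The plan is to exploit the splitting structure of Integrator \ref{extended_SIM}: its one-step update \eqref{phi12}--\eqref{phi3numerical} is the composition of $\phi^1(H)$, $\phi^2(H)$ and the numerical map $\tilde\phi^3(H)$ defined by \eqref{phi3numerical}. Since a composition of symplectic maps is symplectic and the property propagates over all coarse steps, it suffices to verify that each of these three factors is symplectic on all variables. First I would dispatch $\phi^1(H)$ and $\phi^2(H)$: these are the \emph{exact} time-$H$ flows of the Hamiltonians $\tfrac12 (p^{slow})^T p^{slow}$ and $V(q)$ (the first two split vector fields of Section \ref{SectionGIM}), and exact Hamiltonian flows are symplectic, so the update \eqref{phi12} is symplectic.

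The substantive factor is $\tilde\phi^3(H)$, and here I would invoke Lemma \ref{lemma_flowSymplecticity}, which reduces its symplecticity to two conditions on the numerically computed blocks: that $F_{3,k}$ be symplectic, and that $G_{2,k,i}=-J\,\partial F_{3,k}/\partial q^{slow}_{k',i}$ for $i=1,\dots,d_s$. The first condition is supplied by Lemma \ref{lemma_symplecticity} ($A_k$ and $C_k$ satisfy $A_k^TJA_k=C_k^TJC_k=J$ by direct computation from \eqref{Definition_Ak}--\eqref{Definition_Ck}, and $F_{3,k}$ is a power of $C_k$); the second is exactly Lemma \ref{lemma_flowPrimaryFunction}. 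Combining these two facts through Lemma \ref{lemma_flowSymplecticity} then yields that $\tilde\phi^3(H)$ is symplectic, whence the one-step update, being a composition of three symplectic maps, is symplectic, and the theorem follows.

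I do not expect a genuine obstacle to remain, since the difficulty has been front-loaded into Lemmas \ref{lemma_flowSymplecticity}--\ref{lemma_symmetricMatrix}. The one point I would be careful to flag is that the block form of the Jacobian used in the proof of Lemma \ref{lemma_flowSymplecticity} tacitly requires $F_{3,k}^T G_{2,k,i}$ to be symmetric (so that it can appear, rather than $G_{2,k,i}^T F_{3,k}$, in the lower-left block); this symmetry must hold for the \emph{numerical} blocks produced by Integrator \ref{fastMatrixExpScheme}, not merely for the exact $F_3,G_{2,i}$ of \eqref{pureMatrixExponentiationii}, and it is precisely Lemma \ref{lemma_symmetricMatrix} (resting on Lemmas \ref{lemma_reversible} and \ref{lemma_symmetricMatrix_sub}) that secures it. Verifying that all hypotheses of Lemma \ref{lemma_flowSymplecticity} are met by the approximated matrices is therefore the last thing I would check before concluding.
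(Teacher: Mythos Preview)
Your proposal is correct and matches the paper's own proof essentially line for line: the paper likewise observes that \eqref{phi12} is the composition of the exact Hamiltonian flows $\phi^1,\phi^2$ (for $\mathcal{H}_1=\tfrac12[p^{slow}]^2$ and $\mathcal{H}_2=V$), and then combines Lemmas \ref{lemma_symplecticity}, \ref{lemma_flowPrimaryFunction}, \ref{lemma_flowSymplecticity} and \ref{lemma_symmetricMatrix} to conclude that the numerical $\phi^3$ in \eqref{phi3numerical} is symplectic. Your care in flagging that Lemma \ref{lemma_symmetricMatrix} is needed for the Jacobian computation inside Lemma \ref{lemma_flowSymplecticity} is exactly right and mirrors the paper's citation of that lemma.
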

\begin{proof}
    By Lemma \ref{lemma_symplecticity}, \ref{lemma_flowPrimaryFunction}, \ref{lemma_flowSymplecticity}, and \ref{lemma_symmetricMatrix}, the numerical approximation to $\phi^3$ given by \eqref{phi3numerical} is symplectic on all variables.

    The flow given by \eqref{phi12} is symplectic on all variables as well, because it is the composition of $\phi^1$ and $\phi^2$, which respectively correspond to Hamiltonians $\mathcal{H}_1(q^{fast},p^{fast},q^{slow},p^{slow})=[p^{slow}]^2/2$ and $\mathcal{H}_2(q^{fast},p^{fast},q^{slow},p^{slow})=V(q^{fast},q^{slow})$, and hence both are symplectic.

    Consequently, the proposed method, which composes \eqref{phi12} and \eqref{phi3numerical}, is symplectic. $\blacksquare$
\end{proof}

\label{SectionAnalysisSymplecticity}

\subsection{Uniform convergence}

This integrator is convergent due to splitting theory \cite{Trotter:59}, i.e., the global error on $q^{slow},q^{fast},p^{slow},p^{fast}$ is bounded by $\epsilon^{-1} C H$ for some constant $C>0$ in Euclidean norm.

Moreover, this integrator is uniformly convergent in $q$ under typical or reasonable assumptions, and hence $H$ can be chosen independently from $\epsilon$ for stable and accurate integration.

\begin{Condition}
    We will prove a uniform bound of the global error on position for Integrator \ref{extended_SIM} under the following (classical) conditions:
    \begin{enumerate}
        \item Regularity:
            In the integration domain of interest, $\nabla V(\cdot)$ is bounded and Lipschitz continuous with coefficient $L$, i.e. $\| \nabla V(a)-\nabla V(b) \|_2 \leq L \| a-b \|_2$.
            \label{LipschitzCondition}
        \item Stability and bounded energy:
            For a fixed $T$ and $t<T$, denote by $x(t)=(q(t),p(t))$ the exact solution to \eqref{fullSystem}, and by $x_t=(q_t,p_t)$ the discrete numerical trajectory given by Integrator \ref{extended_SIM}, then $\|x(t)\|_2^2 \leq C$, $\|x_t\|_2^2 \leq C$, $|\mathcal{H}(q(t),p(t))| \leq C$ and $|\mathcal{H}(q_t,p_t)| \leq C$ for some constant $C$ independent of $\epsilon^{-1}$ but dependent on initial condition $\| \begin{bmatrix} q_0 \\ p_0 \end{bmatrix} \|_2^2$ and possibly $T$ as well.
            \label{boundednessCondition}
    \end{enumerate}
    \label{Condition1}
\end{Condition}

\begin{Condition}[Slowly varying frequencies:]
    Consider the solution $q(s),p(s)$ up to time $s<=H$ to the system
    \begin{equation}
    \left\{ \begin {array} {rcl}
        dq^{fast} &=& p^{fast} dt \\
        dq^{slow} &=& p^{slow} dt \\
        dp^{fast} &=& -\partial V / \partial q^{fast}(q^{fast},q^{slow})dt - \epsilon^{-1} K(q^{slow}) q^{fast} dt \\
        dp^{slow} &=& -\partial V / \partial q^{slow}(q^{fast},q^{slow})dt - \epsilon^{-1} \frac{1}{2} [q^{fast}]^T \nabla K(q^{slow}) q^{fast} dt
    \end {array} \right. ,
    \label{SIMvaryK}
    \end{equation}
    with initial condition $q(0),p(0)$ in the domain of interest that satisfies bounded energy.
    Assume that $q^{fast}$ can be written as
    \begin{equation}
        \textbf{Q}(t) \sum_{i=1}^{d_f} \vec{\textbf{e}}_i \sqrt{\epsilon} a_i(t) \cos [\sqrt{\epsilon^{-1}} \theta_i(t)+\phi_i]
        \label{qfastForm}
    \end{equation}
    where $\textbf{Q}(t)$ is a slowly varying matrix (i.e., $Q_{ij}(t)\in C^1([0,H])$  and there exists a $C$ independent of $\epsilon^{-1}$ such that $\|\textbf{Q}(t)\|\leq C$ and $\|\dot{\textbf{Q}}(t)\|\leq C$ for all $t\in[0,H]$), indicating a slowly varying diagonalization frame, $d_f$ is the dimension of the fast variable, $\vec{\textbf{e}}_i$ are standard vectorial basis of $\mathbb{R}^{d_f}$, $a_i(t)$'s are slowly varying amplitudes (in the same sense as for $\textbf{Q}(t)$), $\theta_i(t)$'s are non-decreasing and slowly varying in the sense that $\theta_i(t)\in C^2 ([0,H])$, $|\ddot{\theta}_i(t)| \leq C$, $|\theta_i(t)| \leq C$, and $C_1 \leq \dot{\theta}_i(t)\leq C_2$ for some $C>0,C_1>0,C_2>0$ independent of $\epsilon^{-1}$, and $\phi_i$'s are such that $\theta_i(0)=0$.
    \label{slowVarying}
\end{Condition}

\begin{Remark}
    In the case of constant frequencies ($K(\cdot)$ being a constant) and no slow drift ($V(\cdot)$ being a constant), we  have $q^{fast}=\textbf{Q} \sum_{i=1}^{d_f} \vec{\textbf{e}}_i \sqrt{\epsilon} a_i \cos [\sqrt{\epsilon^{-1}}\omega_i t+\phi_i]$ (the amplitude is $\mathcal{O}(\sqrt{\epsilon})$ because of bounded energy).
     When $K$ is not a constant, Condition \ref{slowVarying} is supported by an asymptotic expansion of $q^{fast}$.
     In particular, to the leading order in $\epsilon$, we have $\dot{\theta}_i(t)= \omega_i(t)$ where the $\omega_i^2(t)$ are the eigenvalues
     of $K(q^{slow}_s)$. The rigorous justification of this asymptotic expansion for $d_f>1$ is beyond the scope of this paper.
\end{Remark}

\begin{lemma}
    If Condition \ref{slowVarying} holds, there exists $C_1>0,C_2>0$ independent of $\epsilon^{-1}$ such that
    \begin{equation}
        \| \int_0^H f(t) q^{fast}(t) dt \| \leq \epsilon \left( C_1 \max_{0 \leq s \leq H} \|f(s)\| + C_2 H \max_{0 \leq s \leq H} \|\dot{f}(s)\| + \mathcal{O}(H^2) \right)
        \label{averagingLemmaEq}
    \end{equation}
    for arbitrary matrix valued function $f\in C^1([0,H])$ that satisfies $f(0)=0$.
    \label{averagingLemma}
\end{lemma}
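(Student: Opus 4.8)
The plan is to read \eqref{averagingLemmaEq} as a non-stationary-phase (averaging) estimate. By Condition \ref{slowVarying} the integrand $f(t)q^{fast}(t)$ is the product of the slow factor $f(t)$ with the oscillatory $q^{fast}(t)$, whose amplitude is $\mathcal O(\sqrt{\epsilon})$ and whose phases $\sqrt{\epsilon^{-1}}\theta_i(t)$ have derivatives bounded below by $\sqrt{\epsilon^{-1}}C_1>0$; since there are no stationary points, a single integration by parts in $t$ will upgrade the $\mathcal O(\sqrt{\epsilon})$ coming from the amplitude to the claimed $\mathcal O(\epsilon)$ by gaining an extra factor $1/\dot\psi_i=\mathcal O(\sqrt{\epsilon})$ off the phase. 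We use the representation \eqref{qfastForm} exactly as postulated; its asymptotic justification is, per the preceding Remark, not our concern here.

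First I would substitute \eqref{qfastForm} and factor out the amplitude scale:
\[
\int_0^H f(t)q^{fast}(t)\,dt=\sqrt{\epsilon}\sum_{i=1}^{d_f}\int_0^H g_i(t)\cos\psi_i(t)\,dt,
\]
where $g_i(t):=f(t)\textbf{Q}(t)\vec{\textbf{e}}_i a_i(t)$ and $\psi_i(t):=\sqrt{\epsilon^{-1}}\theta_i(t)+\phi_i$. Because $f(0)=0$ we have $g_i(0)=0$, and because $\textbf{Q},\dot{\textbf{Q}},a_i,\dot a_i$ are bounded by an $\epsilon$-independent constant, on $[0,H]$ one has $\|g_i(t)\|\le C\max_s\|f(s)\|$ and $\|\dot g_i(t)\|\le C\big(\max_s\|f(s)\|+\max_s\|\dot f(s)\|\big)$; moreover $g_i\in C^1$ and $1/\dot\psi_i\in C^1$ since $\dot\psi_i=\sqrt{\epsilon^{-1}}\dot\theta_i$ stays bounded away from $0$ and $\theta_i\in C^2$.

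Next, from $\cos\psi_i=\frac{d}{dt}\big(\sin\psi_i/\dot\psi_i\big)+(\ddot\psi_i/\dot\psi_i^2)\sin\psi_i$ and integration by parts,
\[
\int_0^H g_i\cos\psi_i\,dt=\left[g_i\,\frac{\sin\psi_i}{\dot\psi_i}\right]_0^H-\int_0^H\dot g_i\,\frac{\sin\psi_i}{\dot\psi_i}\,dt+\int_0^H g_i\,\frac{\ddot\psi_i}{\dot\psi_i^2}\sin\psi_i\,dt .
\]
The boundary term at $t=0$ vanishes since $g_i(0)=0$; the one at $t=H$ is $\le(\sqrt{\epsilon}/C_1)\|g_i(H)\|$. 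For the two integrals I would use $1/|\dot\psi_i|\le\sqrt{\epsilon}/C_1$ and $|\ddot\psi_i/\dot\psi_i^2|=\sqrt{\epsilon}\,|\ddot\theta_i|/\dot\theta_i^2\le\sqrt{\epsilon}\,C/C_1^2$ together with the $C^1$ bounds on $g_i$; each of the three pieces is then $\mathcal O(\sqrt{\epsilon})$, giving $\|\int_0^H g_i\cos\psi_i\,dt\|\le\sqrt{\epsilon}\big(c_1\max_s\|f(s)\|+c_2H\max_s\|\dot f(s)\|+c_3H\max_s\|f(s)\|\big)$ with $\epsilon$-independent $c_1,c_2,c_3$.

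Finally, multiplying by $\sqrt{\epsilon}$ and summing $i=1,\dots,d_f$ yields a bound of the form $\epsilon\big(C_1\max_s\|f(s)\|+C_2H\max_s\|\dot f(s)\|+C_3H\max_s\|f(s)\|\big)$; since $f(0)=0$ gives $\max_s\|f(s)\|\le H\max_s\|\dot f(s)\|$, the last term is absorbed into the $\mathcal O(H^2)$ remainder, which is exactly \eqref{averagingLemmaEq}. The step I expect to demand the most care is the bookkeeping: everything in sight ($f$, $\textbf{Q}$, $g_i$, $\dot g_i$) is matrix- or vector-valued, so each inequality must be an honest operator-norm estimate and every constant must be traced back to the $\epsilon$-independent bounds of Condition \ref{slowVarying}. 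There is no genuine analytic obstacle beyond the single integration by parts, which is precisely why the hypotheses $\dot\theta_i\ge C_1>0$ (no stationary phase) and $f(0)=0$ (vanishing boundary term, and the conversion of the residual $\max_s\|f(s)\|$ into the $\mathcal O(H^2)$ term) are what make the clean $\mathcal O(\epsilon)$ bound go through.
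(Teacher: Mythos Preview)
Your proof is correct and follows essentially the same strategy as the paper's: a single integration by parts exploiting the non-stationary phase $\dot\psi_i\ge\sqrt{\epsilon^{-1}}C_1$ to gain the extra factor of $\sqrt{\epsilon}$, with $f(0)=0$ killing the lower boundary term. The only cosmetic difference is that the paper first reduces to the scalar case and then changes variables to $\tau=\theta(t)$ before integrating by parts, whereas you integrate by parts directly in $t$ via the identity $\cos\psi_i=\frac{d}{dt}(\sin\psi_i/\dot\psi_i)+(\ddot\psi_i/\dot\psi_i^2)\sin\psi_i$; these are equivalent manipulations.
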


\begin{proof}
    Recall the form of $q^{fast}$ in Condition \ref{slowVarying}. It is sufficient to prove that for all $i$'s the $i$-th component of $q^{fast}$ satisfies \eqref{averagingLemmaEq}, whereas the $i$-th component writes as:
    \begin{equation}
        \sqrt{\epsilon} \sum_{j=1}^{d_f} Q_{ij}(t)a_j(t)\cos[\sqrt{\epsilon^{-1}}\theta_i(t)+\phi_i]
    \end{equation}
    Furthermore, since summation commutes with integral and therefore will only introduce a factor of $d_f$ on the bound, it is sufficient to prove \eqref{averagingLemmaEq} for $q^{fast}=\sqrt{\epsilon} Q_{ij}(t)a_j(t)\cos[\sqrt{\epsilon^{-1}}\theta_i(t)+\phi_i]$. On this token, we could assume that we are in the 1D case and absorb $Q(t)$ into $a_j(t)$.

    Similarly, slowly varying $a_i(t)$ can be absorbed into the test function $f(s)$, and doing so will only change the constants on the right hand side. Therefore, it will be sufficient to prove that:
    \begin{equation}
        \left| \int_0^H \sqrt{\epsilon} \cos[\sqrt{\epsilon^{-1}} \theta(t)+\phi] f(t) dt \right| \leq \epsilon \left( C_1 \max_{0 \leq s \leq H} |f(s)| + C_2 H \max_{0 \leq s \leq H} |f'(s)| + \mathcal{O}(H^2) \right)
        \label{gadgfsa}
    \end{equation}
    for a scalar valued function $f\in C^1([0,H])$ that satisfies $f(0)=0$.

    By Condition \ref{slowVarying}, $\theta$ is strictly increasing. If we write $\tau=\theta(t)$, there will be a $\theta^{-1}$ such that $t=\theta^{-1}(\tau)$. With time transformed to the new variable $\tau$, the integral on the left hand side of \eqref{gadgfsa} is equal to
    \begin{equation}
        \int_0^{\theta(H)} \sqrt{\epsilon} \cos [\sqrt{\epsilon^{-1}}\tau+\phi] f(\theta^{-1}(\tau)) \frac{d\theta^{-1}}{d\tau} (\tau) \,d\tau
    \end{equation}
    By integration by parts, this is (since $f(0)=0$)
    \begin{equation}
        -\epsilon \sin[\sqrt{\epsilon^{-1}}H+\phi] f(H) \frac{1}{\dot{\theta}(H)}+\epsilon \int_0^{\theta(H)} \sin[\sqrt{\epsilon^{-1}}\tau+\phi] \left[ \frac{df}{dt} \left( \frac{d\theta^{-1}}{d\tau} \right)^2 + f(\theta^{-1}(\tau)) \frac{d^2 \theta^{-1}}{d\tau^2} (\tau) \right]
    \end{equation}
    Because $\ddot{\theta}\leq C$, $\omega-CH \leq \dot{\theta}\leq \omega+CH$, where $\omega:=\dot{\theta}(0) \geq C_1 > 0$. Together with $\frac{d\theta^{-1}}{d\tau}=\frac{1}{\dot{\theta}}$, we have $\frac{d\theta^{-1}}{d\tau} = 1/\omega+\mathcal{O}(H)$. Similarly, we also have
    \begin{equation}
        \frac{d^2 \theta^{-1}}{d\tau^2}
        =\frac{d}{d\tau}\frac{1}{\dot{\theta}(t)}
        =\frac{dt}{d\tau}\frac{d}{dt}\frac{1}{\dot{\theta}(t)}
        =-\frac{1}{\dot{\theta}(t)^3}\ddot{\theta}(t)
        =\mathcal{O}(1)
    \end{equation}
    It is easy to show that $\theta(H)=\mathcal{O}(H)$. Together with $\sin(\cdot)$ being $\mathcal{O}(1)$, the left hand side in \eqref{gadgfsa} is bounded by
    \begin{equation}
        \epsilon f(H) \mathcal{O}(1)+\epsilon \mathcal{O}(H) \left( \mathcal{O}(1) \max_{0 \leq s \leq H} |\dot{f}(s)| + \mathcal{O}(1) \max_{0 \leq s \leq H} |f(s)| \right) \leq \epsilon \left( \mathcal{O}(1) \max_{0 \leq s \leq H} |f(s)| + \mathcal{O}(H) \max_{0 \leq s \leq H} |\dot{f}(s)| \right)
    \end{equation}
\end{proof}

\begin{theorem}
    If Conditions \ref{Condition1} and \ref{slowVarying} hold, the proposed method (Integrator \ref{extended_SIM}) for system \eqref{fullSystem} has a uniform global error of $\mathcal{O}(H)$ in $q$, given a fixed total simulation time $T=NH$:
    \begin{equation}
        \| q(T)-q_T \|_2 \leq C H
        \label{globalError}
    \end{equation}
    where $q(T),p(T)$ is the exact solution and $q_T,p_T$ is the numerical solution; $C$ is a positive constant independent of $\epsilon^{-1}$ but dependent on simulation time $T$, scaleless elasticity matrix $K$, slow potential energy $V(\cdot)$ and initial condition $\| \begin{bmatrix} q_0 \\ p_0 \end{bmatrix} \|_2$.
\label{UniformConvergence}
\end{theorem}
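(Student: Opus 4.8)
The plan is a Lady Windermere's fan (discrete Gronwall) argument over the $N=T/H$ coarse steps, reduced to showing that every \emph{position-directed} one-step error is $\mathcal{O}(H^{2})$ with a constant independent of $\epsilon^{-1}$. The genuinely delicate part is the error on $q^{slow}$ (the error on $q^{fast}$ being already $\mathcal{O}(\sqrt{\epsilon})$ in both solutions by the bounded-energy Condition~\ref{Condition1}, hence harmless), and the only non-classical tool it needs is Lemma~\ref{averagingLemma}, which extracts a factor $\epsilon$ out of the otherwise $\mathcal{O}(\epsilon^{-1})$ stiff contribution to $\dot p^{slow}$.

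I would first fix notation: let $\phi_{H}$ be the exact time-$H$ flow of \eqref{fullSystem}, let $\Psi_{H}$ be one coarse step of Integrator~\ref{extended_SIM} (i.e.\ \eqref{phi3numerical} after \eqref{phi12}), and let $\mathcal{S}_{H}:=\phi^{3}(H)\circ\phi^{2}(H)\circ\phi^{1}(H)$ be the idealized splitting using the \emph{exact} flow $\phi^{3}$ of \eqref{eqqasiquadflow}. Decompose $\Psi_{H}-\phi_{H}=(\Psi_{H}-\mathcal{S}_{H})+(\mathcal{S}_{H}-\phi_{H})$. The first term is exactly the matrix-exponentiation error of Integrator~\ref{fastMatrixExpScheme}: by the Velocity-Verlet global-error estimate quoted after \eqref{symplecticCVerlet} it is $\mathcal{O}(\epsilon^{-1}h^{2})=\mathcal{O}(\epsilon^{-1}H^{2}/4^{n})$ in operator norm (and similarly for the $G_{2,k,i}$ blocks, via Lemma~\ref{lemma_flowPrimaryFunction}), so choosing $n\ge\frac{1}{2}\log_{2}\epsilon^{-1}$ — admissible by the Remark following Integrator~\ref{fastMatrixExpScheme} — renders it $\mathcal{O}(H^{2})$ uniformly (it vanishes if the exact $F_{3,k},G_{2,k,i}$ are used). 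The second term, the Lie--Trotter splitting defect, is then treated componentwise. $\mathcal{S}_{H}$ and $\Psi_{H}$ advance $q^{slow}$ identically, by the exact $\phi^{1}$ step $q^{slow}\mapsto q^{slow}+Hp^{slow}$ with $p^{slow}$ held at its incoming value; comparing with the exact increment $\int_{t}^{t+H}p^{slow}(s)\,ds$ and using that $\dot p^{slow}=\mathcal{O}(1)$ on the step (the stiff drift $\epsilon^{-1}\frac{1}{2}[q^{fast}]^{T}\nabla K q^{fast}=\mathcal{O}(1)$ since $q^{fast}=\mathcal{O}(\sqrt{\epsilon})$), the $q^{slow}$ defect is $\int_{t}^{t+H}(p^{slow}(s)-p^{slow}(t))\,ds=\mathcal{O}(H^{2})$.

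The crux is the $p^{slow}$ defect. The exact increment is $-\int_{t}^{t+H}\nabla_{q^{slow}}V\,ds-\epsilon^{-1}\int_{t}^{t+H}\frac{1}{2}[q^{fast}(s)]^{T}\nabla K(q^{slow}(s))q^{fast}(s)\,ds$; the numerical one replaces the smooth part by the kick $-H\nabla_{q^{slow}}V$ (defect $\mathcal{O}(H^{2})$ by the Lipschitz Condition~\ref{Condition1}) and, by \eqref{ghfdsed54d4}, replaces the stiff part by $\epsilon^{-1}\int_{0}^{H}\frac{1}{2}[\hat q^{fast}(s)]^{T}\nabla K(q^{slow}_{k'})\hat q^{fast}(s)\,ds$, where $\hat q^{fast}$ is the \emph{frozen}-frequency oscillator launched from the post-\eqref{phi12} state. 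I would bound the difference of these two stiff integrals by (a) freezing $\nabla K$, worth $\mathcal{O}(\epsilon^{-1})\cdot\mathcal{O}(H)\cdot\mathcal{O}(\epsilon)\cdot H=\mathcal{O}(H^{2})$, and (b) replacing $q^{fast}$ by $\hat q^{fast}$: writing $r:=q^{fast}-\hat q^{fast}$, so that $r(0)=0$ and $\dot r(0)=\mathcal{O}(H)$ (the gap between the continuous $\nabla V$-forcing and the discrete kick), and linearizing the quadratic form, (b) reduces to $\epsilon^{-1}\int_{0}^{H}[\hat q^{fast}(s)]^{T}\nabla K(q^{slow}_{k'})r(s)\,ds$ plus a remainder of order $\epsilon^{-1}\|r\|_{\infty}^{2}H$. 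Now I would apply Lemma~\ref{averagingLemma} with $\hat q^{fast}$ (which has the form \eqref{qfastForm}, by Condition~\ref{slowVarying}) in the oscillatory role and $f(s):=\nabla K(q^{slow}_{k'})r(s)$ as the test function — admissible since $f(0)=0$ — after a Gronwall estimate bounding $\|r\|_{\infty},\|\dot r\|_{\infty}$ on $[0,H]$ (its homogeneous part is an isometry in the energy norm, its inhomogeneity is $\mathcal{O}(\epsilon^{-1/2}H)$ from the frozen-versus-true $K$ mismatch acting on $q^{fast}=\mathcal{O}(\sqrt{\epsilon})$, so $\|r\|_{\infty}=\mathcal{O}(\sqrt{\epsilon}H)+\mathcal{O}(H^{2})$). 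This turns (b), and likewise its remainder, into $\epsilon^{-1}\cdot\epsilon\cdot\mathcal{O}(H^{2})=\mathcal{O}(H^{2})$, uniformly in $\epsilon^{-1}$; the same Gronwall estimate, together with the $\sqrt{\epsilon}$ weight relating the Euclidean and energy norms on $q^{fast}$, also gives a $q^{fast}$ defect of $\mathcal{O}(H^{2})$.

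Finally I would assemble the global bound. Measuring the numerical error $e_{k}$ in the norm that is Euclidean on $(q^{slow},p^{slow})$ and energy-weighted on $(q^{fast},p^{fast})$ — so that the oscillator propagator $F_{3,k}$ is non-expansive, the coupling of the $p^{fast}$-error into the Euclidean $q^{fast}$-error carries a harmless $\mathcal{O}(\sqrt{\epsilon})$ factor, and the kicks \eqref{phi12} are $(1+CH)$-Lipschitz by Condition~\ref{Condition1} — the one-step map $\Psi_{H}$ is $(1+CH)$-Lipschitz on the bounded-energy region. Combined with the componentwise defects above, the discrete Gronwall recursion $\|e_{k+1}\|\le(1+CH)\|e_{k}\|+CH^{2}$ gives $\|q(T)-q_{T}\|_{2}\le e^{CT}\,T\,CH=C'H$, with $C'$ depending on $T$, $K$, $V$ and $\|(q_{0},p_{0})\|_{2}$ but not on $\epsilon^{-1}$. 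I expect the main obstacle to be step~(b) above: a crude bound on the stiff-drift mismatch is only $\mathcal{O}(\epsilon^{-1/2}H^{3})$ per step and hence $\mathcal{O}(\epsilon^{-1/2}H^{2})$ globally, which destroys uniformity, so everything hinges on legitimately invoking Lemma~\ref{averagingLemma} — i.e.\ on verifying $f(0)=0$ and the $C^{1}$ bounds for the auxiliary function $r$, which in turn rest on Condition~\ref{slowVarying} and on careful bookkeeping of the coefficient and initial-value mismatches between $q^{fast}$ and the frozen oscillator $\hat q^{fast}$.
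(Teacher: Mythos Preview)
Your overall architecture---Lady Windermere in a weighted norm, with Lemma~\ref{averagingLemma} supplying the crucial $\epsilon$-gain---matches the paper's, but you deploy the lemma in the wrong place and with the wrong norm, and this creates a genuine gap. The paper does \emph{not} apply Lemma~\ref{averagingLemma} to the $p^{slow}$ defect. It compares the exact flow of \eqref{SIMvaryK} with the exact flow of a \emph{constant}-$\tilde K$ auxiliary system \eqref{SIM} (citing \cite{SIM1} for the numerical-vs-constant-$K$ error and for the Lipschitz estimate), and applies Lemma~\ref{averagingLemma} to the \emph{fast momentum} difference $y=\tilde p^{fast}-p^{fast}$, with test function $f(s)=\tilde K-K(q^{slow}(s))$; this gives $y=\mathcal O(H)$, hence $x=\mathcal O(H^2)$, uniformly. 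For $p^{slow}$ the paper settles for the crude $\dot y^s=\mathcal O(1)\Rightarrow y^s=\mathcal O(H)$, which suffices because it works in the energy norm \eqref{energyNorm} on \emph{all} variables, including $p^{slow}$: then $\sqrt\epsilon\cdot\mathcal O(H)\le\mathcal O(H^2)$ in the coarse-step regime $H\ge\sqrt\epsilon$.

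Your route has two linked problems. First, your norm (Euclidean on the slow block) does not make $\Psi_H$ uniformly $(1+CH)$-Lipschitz: the $G_{2,k,i}$-drift in \eqref{phi3numerical} couples a fast-variable perturbation of energy-norm size $\delta$ into a $p^{slow}$-perturbation of Euclidean size $\mathcal O(\epsilon^{-1/2}H)\,\delta$ (the integrand carries $\epsilon^{-1}$ and one factor of $q^{fast}=\mathcal O(\sqrt\epsilon)$), a coupling you do not address; weighting $p^{slow}$ by $\sqrt\epsilon$, as the paper does, is precisely what tames it. Second, even granting Lipschitz, your use of Lemma~\ref{averagingLemma} for (b) needs a bound on $\|\dot r\|_\infty$; the Gronwall estimate you invoke (isometry plus $\mathcal O(\epsilon^{-1/2}H)$ forcing) only yields $\|\dot r\|_\infty=\mathcal O(H+\epsilon^{-1/2}H^2)$, and the second term feeds through $C_2H\max|\dot f|$ to produce an $\epsilon^{-1/2}H^3$ local defect---exactly the non-uniformity you flagged as the main obstacle. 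The fix is to apply Lemma~\ref{averagingLemma} one level down, to $\dot r$ itself (test function $K(q^{slow}(s))-K(q^{slow}_{k'})$), which is what the paper actually does; once $\dot r=\mathcal O(H)$ uniformly, either your $p^{slow}$ argument or the paper's cruder one goes through.
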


\begin{proof}
    Let  $\tilde{K}$ be a constant matrix and consider the following system:
    \begin{equation}
        \left\{ \begin {array} {rcl}
        d\tilde{q}^{fast} &=& \tilde{p}^{fast} dt\\
        d\tilde{q}^{slow} &=& \tilde{p}^{slow} dt \\
        d\tilde{p}^{fast} &=& -\partial V / \partial q^{fast}(\tilde{q}^{fast},\tilde{q}^{slow})dt - \epsilon^{-1} \tilde{K} \tilde{q}^{fast} dt \\
        d\tilde{p}^{slow} &=& -\partial V / \partial \tilde{q}^{slow}(\tilde{q}^{fast},\tilde{q}^{slow})dt
        \end {array} \right. ,
        \label{SIM}
    \end{equation}
    Integrator \ref{extended_SIM}, applied to the  system \eqref{SIM} under Condition \ref{Condition1}, has been shown in \cite{SIM1} to be uniformly convergent in ``energy norm'' (or equivalently, uniformly convergent on position and non-uniformly convergent on momentum).
     Recall that the ``energy norm'' was defined in \cite{SIM1} to be
     \begin{equation}\label{energyNormor}
     \| [\tilde{q}, \tilde{p}] \|_E = \sqrt{\tilde{q}^T \tilde{q}+\epsilon \tilde{p}^T \tilde{K}^{-1} \tilde{p}},
      \end{equation}
      but in fact $\tilde{K}^{-1}$ is not important because it is just $\mathcal{O}(1)$, and the following definition would also work for the proof there:
    \begin{equation}
        \| [\tilde{q}, \tilde{p}] \|_E = \sqrt{\tilde{q}^T \tilde{q}+\epsilon \tilde{p}^T \tilde{p}}
        \label{energyNorm}
    \end{equation}
    Observe that,  \eqref{energyNormor} is proportional to the physical energy of $\sqrt{\epsilon}(K^{-1/2}q,K^{-1/2}p)$.

    The system considered here, however, is \eqref{SIMvaryK}.  To prove uniform convergence for \eqref{SIMvaryK}, it is sufficient to show that (i) a $\delta$ difference between two trajectories of \eqref{SIM} in energy norm leads to a difference of $\delta(1+CH)$ in energy norm after a time step $H$ (ii) trajectories of \eqref{SIM} and \eqref{SIMvaryK} starting at the same point remain at at a distance at most $\mathcal{O}(H^2)$ in energy norm after time $H$, i.e., a 2nd order uniform local error. (i) was shown by Lemma 6.5 in \cite{SIM1}, and we will now prove (ii).

    We can assume without loss of generality that we start at time 0, and let $\tilde{K}=K(q^{slow}(0))$, $q^{fast,slow}(0)=\tilde{q}^{fast,slow}(0)$ (where $q^{fast,slow}=(q^{fast},q^{slow})$) and $p^{fast,slow}(0)=\tilde{p}^{fast,slow}(0)$. We first let $x=\tilde{q}^{fast}-q^{fast}$ and $y=\tilde{p}^{fast}-p^{fast}$, and proceed to bound $x$ and $y$:

    The evolutions of $x$ and $y$ follow from
    \begin{equation}
    \begin{cases}
        \dot{x}&=y \\
        \dot{y}&=-\left(\frac{\partial V}{\partial q^{fast}}(\tilde{q})-\frac{\partial V}{\partial q^{fast}}(q)\right)-\epsilon^{-1}\left( \tilde{K}\tilde{q}^f-K(q^{slow})q^{fast} \right)
    \end{cases}
    \end{equation}
    Writing $f_1=-\left(\frac{\partial V}{\partial q^{fast}}(\tilde{q})-\frac{\partial V}{\partial q^{fast}}(q)\right)$ and $f_2=(\tilde{K}-K(q^{slow}))q^{fast}$, we have
    \begin{equation}
    \begin{cases}
        \dot{x}&=y \\
        \dot{y}&=f_1-\epsilon^{-1}\tilde{K}x-\epsilon^{-1}f_2
    \end{cases}
    \end{equation}
    If we let $B(t)=\exp \left( \begin{bmatrix} 0 & I \\ -\epsilon^{-1}\tilde{K} & 0 \end{bmatrix} t \right)$, we will have
    \begin{equation}
        \begin{bmatrix} x(t) \\ y(t) \end{bmatrix} = B(t) \begin{bmatrix} x(0) \\ y(0) \end{bmatrix} + \int_0^t B(t-s) \begin{bmatrix} 0 \\ f_1-\epsilon^{-1}f_2 \end{bmatrix} \, ds
    \end{equation}
    The first term on the right hand side drops off because $x(0)=0$ and $y(0)=0$ by definition.

    Since $\tilde{K}$ is a constant matrix, it is sufficient to diagonalize it and treat each diagonal element individually. Hence, assume without loss of generality that we are in the 1D case. Then $B(s)=\begin{bmatrix} \cos(\sqrt{\epsilon^{-1}\tilde{K}} s) & \sin(\sqrt{\epsilon^{-1}\tilde{K}} s)/\sqrt{\epsilon^{-1}\tilde{K}} \\ -\sqrt{\epsilon^{-1}\tilde{K}} \sin(\sqrt{\epsilon^{-1}\tilde{K}} s) & \cos(\sqrt{\epsilon^{-1}\tilde{K}} s) \end{bmatrix}$. As a consequence,
    \begin{equation}
        y(t)=\int_0^t \cos[\sqrt{\epsilon^{-1}\tilde{K}}(t-s)] \left[ f_1 - \epsilon^{-1} (\tilde{K}-K(q^{slow}))q^{fast} \right] ds
    \end{equation}

    By Lipschitz continuity of $V$ (Item \ref{LipschitzCondition} of Condition \ref{Condition1}), we will have
    \begin{equation}
        |f_1(t)| \leq L |x(t)| = L |\int_0^t y(s) ds| = \mathcal{O}(t)
    \end{equation}
    The first inequality holds because $f_1$ is the difference between partial derivatives of $V$, which could be bounded by the difference between full derivatives. The last equality holds because $y=p-\tilde{p}$ is bounded due to the fact that $[q(s),p(s)]$ and $[\tilde{q}(s), \tilde{p}(s)]$ are bounded (Item \ref{boundednessCondition} of Condition \ref{Condition1}). Consequently, we have
    \begin{equation}
        \left|\int_0^t \cos[\sqrt{\epsilon^{-1}\tilde{K}}(t-s)] f_1 \, ds \right| \leq \int_0^t |f_1| = \mathcal{O}(t^2)
        \label{ghfqaget}
    \end{equation}

    In order to bound $\int_0^t \cos[\sqrt{\epsilon^{-1}\tilde{K}}(t-s)] \left[ \epsilon^{-1} (\tilde{K}-K(q^{slow}))q^{fast} \right] ds$, we use Lemma \ref{averagingLemma} (with the choice of $f=\tilde{K}-K(q^{slow})$). Indeed, $\cos[\sqrt{\epsilon^{-1}\tilde{K}}(t-s)]$ can be absorbed into $q^{fast}(s)=\sqrt{\epsilon} \cos[\sqrt{\epsilon^{-1}} \theta(s)+\phi]$: due to an equality $2\cos(A)\cos(B)=\cos(A+B)+\cos(A-B)$, $\theta$ will be just added by $\pm \sqrt{\tilde{K}}$ and $\phi$ will have a new constant value, neither of which will violate Condition \ref{slowVarying}.

    For $f$, we clearly have $f=0$ at $s=0$. By mean value theorem, there is a $\xi_s$ such that $f(s)=K\circ q^{slow} (0)-K\circ q^{slow} (s)=\frac{d K\circ q^{slow}}{dt} (\xi_s) \cdot s$, and therefore $f(s)=\mathcal{O}(s)$. Similarly, $\dot{f}(s)=\mathcal{O}(1)$. Plotting these two bounds in Lemma \ref{averagingLemma}, we obtain
    \begin{equation}
        \left| \int_0^t \cos[\sqrt{\epsilon^{-1}\tilde{K}}(t-s)] \left[ \epsilon^{-1} (\tilde{K}-K(q^{slow}))q^{fast} \right] ds \right| = \mathcal{O}(t)
    \end{equation}

    Putting this together with \eqref{ghfqaget}, we arrive in $y(t)=\mathcal{O}(t)$, and $x(t)=\int_0^t y(s) \, ds=\mathcal{O}(t^2)$ follows.

    Next, we bound $y$: since
    \begin{equation}
        \left| \int_0^t \cos[\sqrt{\epsilon^{-1}\tilde{K}}(t-s)] \left[ \epsilon^{-1} (\tilde{K}-K(q^{slow}))q^{fast} \right] ds \right| = \left| \int_0^t \cos[\ldots] \epsilon^{-1} \mathcal{O}(s) \sqrt{\epsilon} \mathcal{O}(1) \cos[\ldots] \, ds \right|
        = \epsilon^{-1/2} \mathcal{O}(t^2)
    \end{equation}
    we have $y(t)=\epsilon^{-1/2} \mathcal{O}(t^2)$. Together with $x(t)=\mathcal{O}(t^2)$, this is equivalent to $\|[x,y]\|_E=\mathcal{O}(t^2)$.

    Similarly, we can bound $q^{slow}-\tilde{q}^{slow}$ and $p^{slow}-\tilde{p}^{slow}$. Let $x^s=q^{slow}-\tilde{q}^{slow}$ and $y^s=p^{slow}-\tilde{p}^{slow}$, then we have:
    \begin{equation}
    \begin{cases}
        \dot{x}^s &=y^s \\
        \dot{y}^s &=-\left(\frac{\partial V}{\partial q^{slow}}(\tilde{q})-\frac{\partial V}{\partial q^{slow}}(q)\right)-\epsilon^{-1}\frac{1}{2}[q^{fast}]^T \nabla K(q^{slow}) q^{fast}
    \end{cases}
    \end{equation}
    Analogous to before, the first term on the right hand side of the $y^s$ dynamics is $\mathcal{O}(t)$. Since $q^{fast}=\mathcal{O}(\epsilon^{1/2})$, the second term on the right hand side is $\mathcal{O}(1)$. Therefore, $\dot{y}^s=\mathcal{O}(1)$, $y(t)=y(0)+\mathcal{O}(t)=\mathcal{O}(t)$, and $x(t)=x(0)+\int_0^t y(s) \, ds=\mathcal{O}(t^2)$. For our purpose of fast integration, we use a big timestep $H\geq \sqrt{\epsilon}$, and hence $y(H)=\mathcal{O}(H) \leq \epsilon^{-1/2} \mathcal{O}(H^2)$ (notice that if $H<\sqrt{\epsilon}$, we do not even need to prove uniform convergence, because the non-uniform error bound that is guaranteed by Lie-Trotter splitting theory is already very small).

    $\mathcal{O}(H^2)$ and $\epsilon^{-1/2} \mathcal{O}(H^2)$ bounds on separations of slow position and slow momentum imply a $\mathcal{O}(t^2)$ uniform bound in energy norm (analogous to that of the fast degrees of freedom). This demonstrates a 2nd-order uniform local error on all variables in energy norm, and therefore concludes the proof.
\end{proof}

\begin{Remark}
    Unlike \eqref{globalError}, a global bound on the error of momentum will not be uniform. The error propagation is quantified in energy norm, and in 2-norm we will only have $\epsilon^{-1/2}\mathcal{O}(H^2)$ local error and $\epsilon^{-1/2}\mathcal{O}(H)$ global error on momentum. In fact, Integrator \ref{extended_SIM} applied to the constant frequency system \eqref{SIM} is  non-uniformly convergent on momentum \cite{SIM1}.
\end{Remark}

\section{Numerical Examples}
\label{SectionNumerics}
\subsection{The case of a diagonal frequency matrix}
Consider the Hamiltonian example introduced in \cite{LeBris:07}:
\begin{equation}
    \mathcal{H}=\frac{1}{2}p_x^2+\frac{1}{2}p_y^2+(x^2+y^2-1)^2+\frac{1}{2}(1+x^2)\omega^2 y^2
    \label{example}
\end{equation}

When $\omega=\epsilon^{-1/2} \gg 1$, bounded energy translates to initial conditions $x(0) \sim \omega y(0)$, which satisfy separation of timescales: $x$ is the slow variable, and $y$ is the fast. $K(x)=1+x^2$ is trivially diagonal. In addition to conservation of total energy, $I=\frac{p_y^2}{2 \sqrt{1+x^2}}+\frac{\sqrt{1+x^2} \omega^2 y^2}{2}$ is an adiabatic invariant.

\begin{figure}[ht]
\centering
\subfigure[The proposed method with coarse timestep $H=0.1$]{
\includegraphics[width=0.45\textwidth]{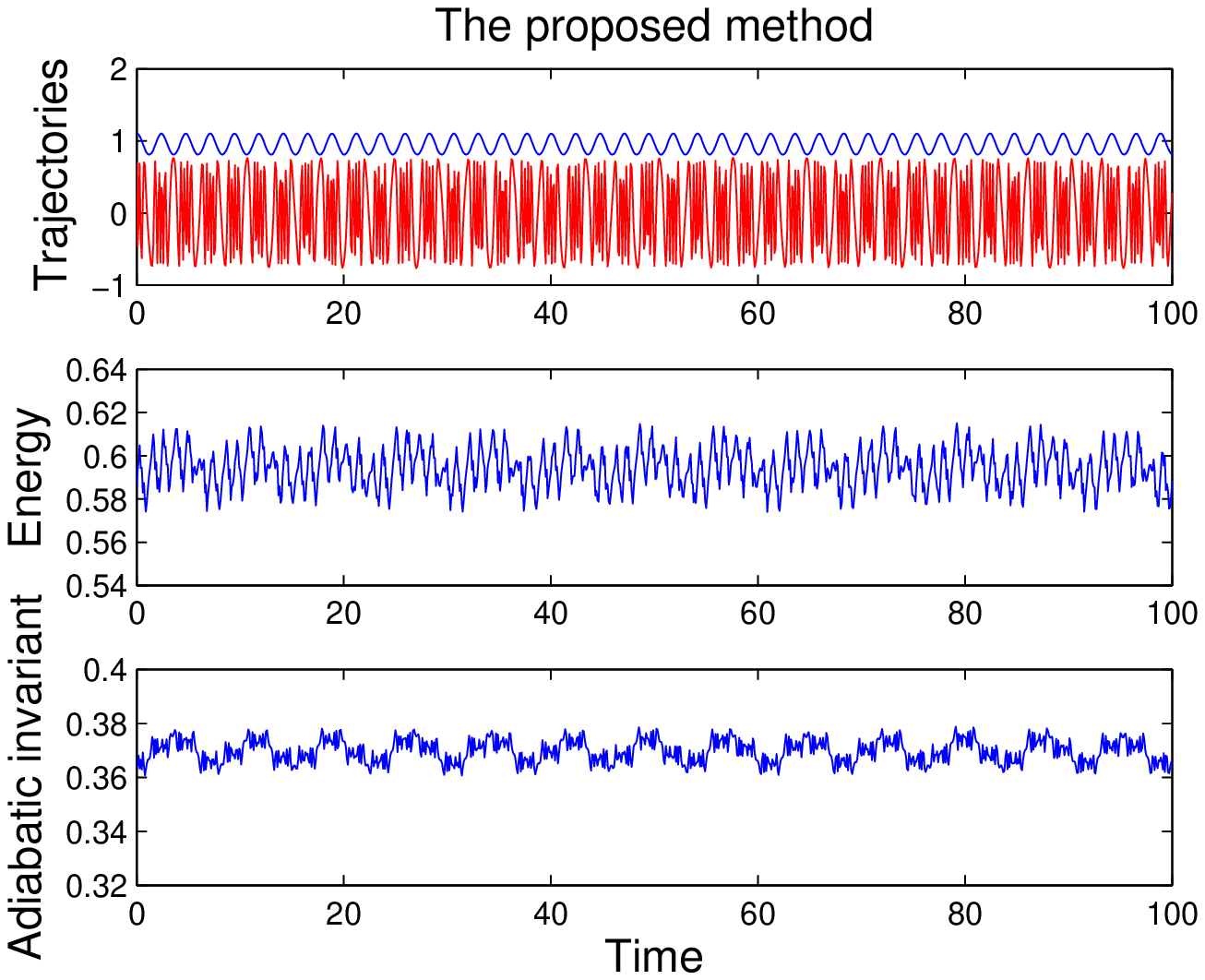}
\label{example1result1}
}
~
\subfigure[Variational Euler with small timestep $h=0.1/\omega=0.001$]{
\includegraphics[width=0.45\textwidth]{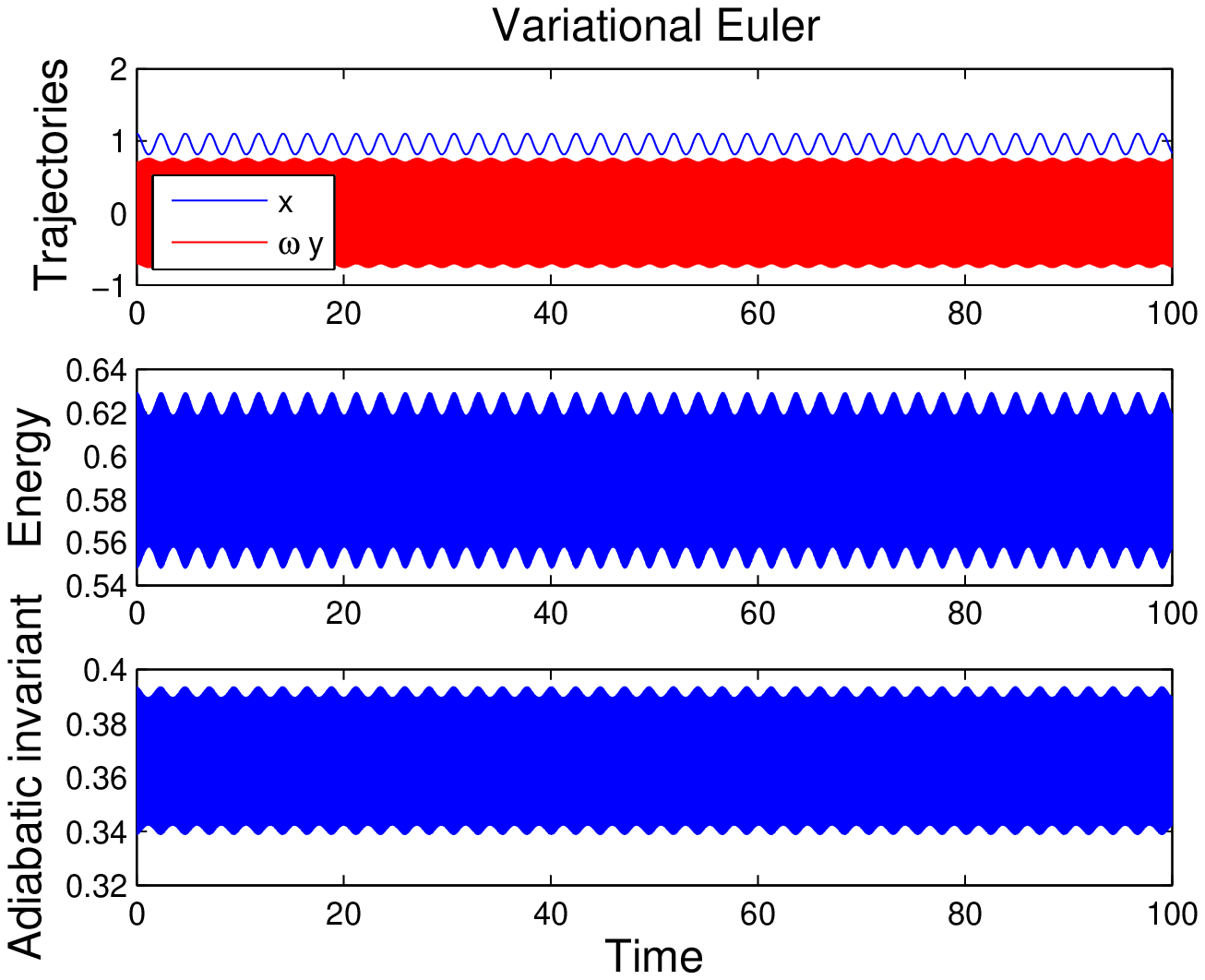}
\label{example1result2}
}
\subfigure[Very long time simulation by the proposed method with coarse timestep $H=0.1$]{
\includegraphics[width=0.45\textwidth]{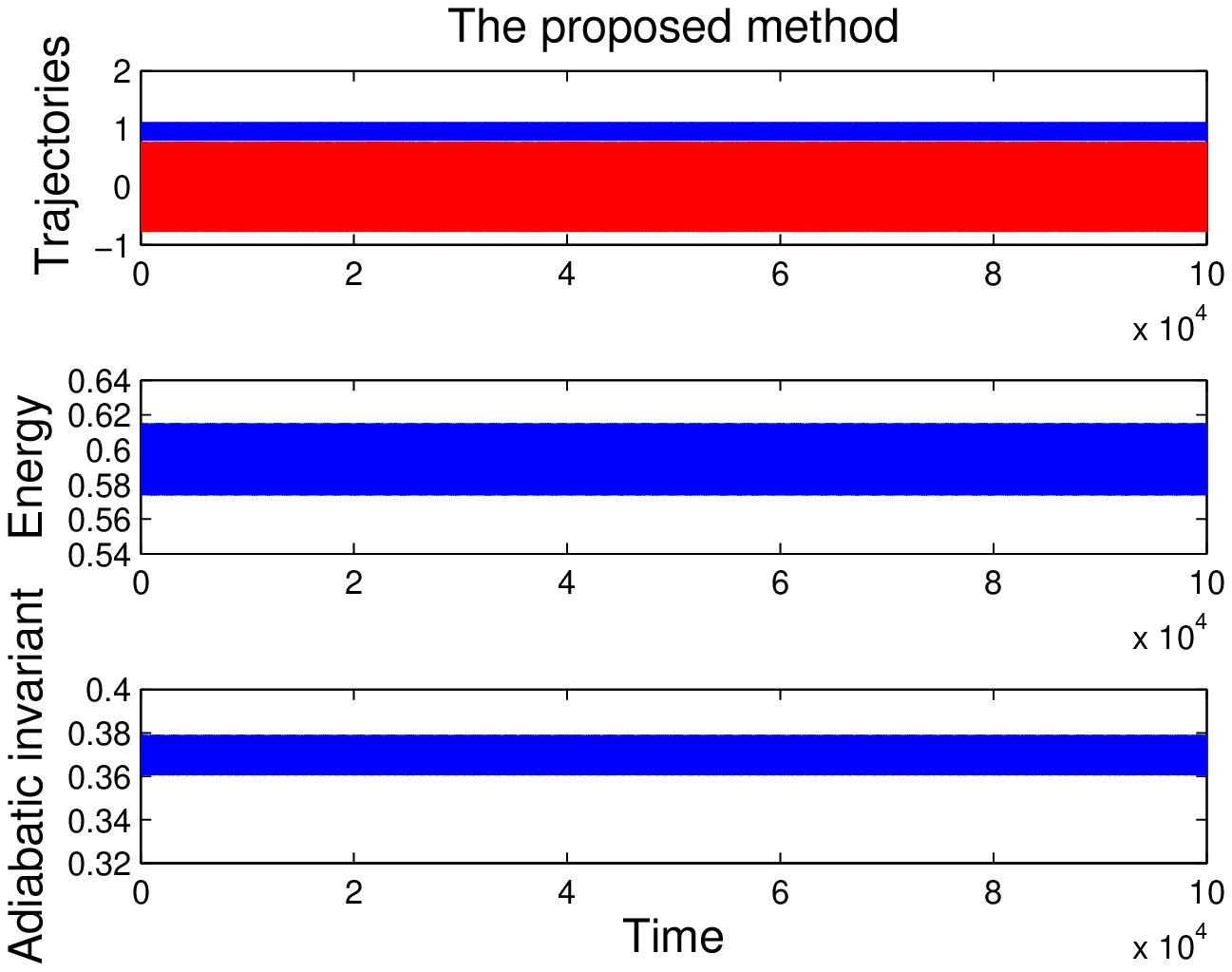}
\label{example1result3}
}
\caption{\footnotesize Simulations of a diagonal fast frequency example \eqref{example} by the proposed method and Variational Euler. $\omega=100$; $x(0)=1.1$, $y(0)=0.7/\omega$. }
\label{nonquad}
\end{figure}

\begin{figure}[ht]
\centering
\includegraphics[width=0.45\textwidth]{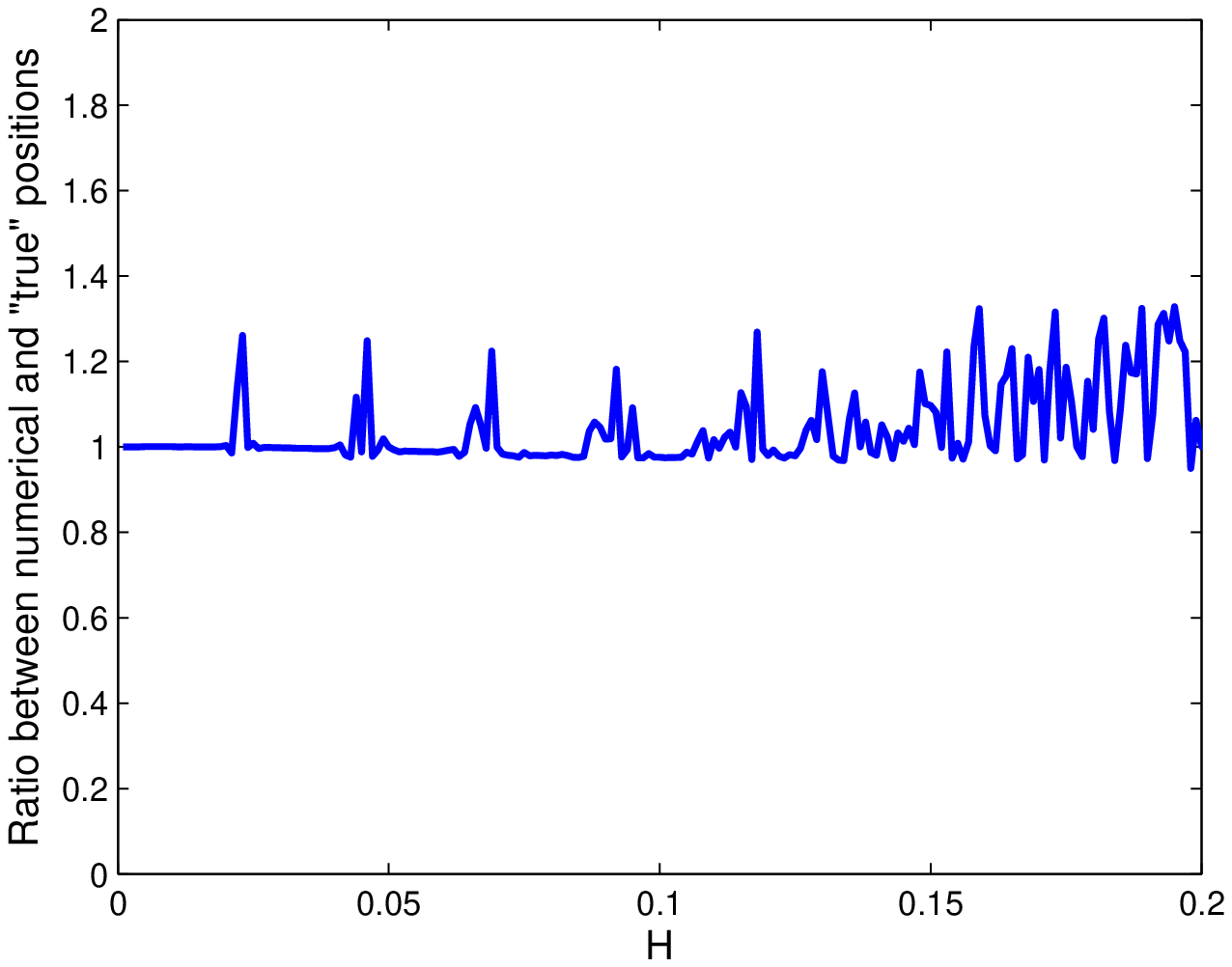}
\caption{\footnotesize Investigation on resonance frequencies of the proposed method on example \eqref{example}. The ratio between $x(T)|_{T=100}$ integrated by the proposed method integration and benchmark provides the ruler: a ratio closer to $1$ means a more accurate integration, and deviations from $1$ stand for step lengths that correspond to resonance frequencies. Time step $H$ samples from $0.001$ to $0.2$ with an increment of $0.001$. $\omega=100$; $x(0)=1.1$, $y(0)=0.7/\omega$. Benchmark is obtained by fine VE integration with $h=0.01/\omega$.}
\label{resonance}
\end{figure}

A comparison between Variational Euler and the proposed method is shown in Figure \ref{nonquad}.There it can be seen that preservations of energy and adiabatic invariant are numerically captured at least to a very large timescale. Since there is no overhead spent on matrix exponentiation here, an accurate $100$x speed up is achieved by the proposed method (because $H/h=100$).

It is known that the impulse method and its derivatives (such as mollified impulse methods) are not stable if the integration step falls in resonance intervals (mollified impulse methods have much narrower resonance intervals, which however still exist) \cite{Skeel:99, CalvoSena09}. Similarly, it will be very unnatural if the proposed method does not have resonance, because it reduces to a 1st-order version of impulse methods when there is no slow variable (Remark \ref{degenerateToImpulse}). In fact, in our numerical investigation (Figure \ref{resonance}), we clearly observe resonance frequencies before the integration step reaches the unstable limit (around $H\approx 0.5$), and widths of resonant intervals increase as $H$ grows for this particular example; however, we will not carry out a systematic analysis on resonance due to limitation of the length of a short communication.

\subsection{The case of a non-diagonal frequency matrix}
\label{nondiagonalExample}

Extend the previous example to a toy example of 3 degrees of freedom:
\begin{equation}
    \mathcal{H}=\frac{1}{2}p_x^2+\frac{1}{2}p_y^2+\frac{1}{2}p_z^2+(x^2+y^2+z^2-1)^2+\frac{1}{2} \omega^2 \begin{bmatrix} y \\ z \end{bmatrix}^T \begin{bmatrix} 1+x^2 & x^2-1 \\ x^2-1 & 3x^2 \end{bmatrix} \begin{bmatrix} y \\ z \end{bmatrix}
    \label{toyexample}
\end{equation}

It is easy to check that eigenvalues of $K(x)=\begin{bmatrix} 1+x^2 & x^2-1 \\ x^2-1 & 3x^2 \end{bmatrix}$ are both positive when $x>0.44$, which will always be true if the initial condition of $x$ stays close to $1$ and $\omega$ is big enough. In this case, bounded energy again implies $x(0) \sim \omega y(0) \sim \omega z(0)$ and gives well-separation of timescales: $x$ is the slow variable and $y$ and $z$ are the fast. $K(x)$ has its orthogonal frame for diagonalization as well as its eigenvalues slowly varying with time.

\begin{figure}[ht]
\centering
\subfigure[Till time 50]{
\includegraphics[width=0.60\textwidth] {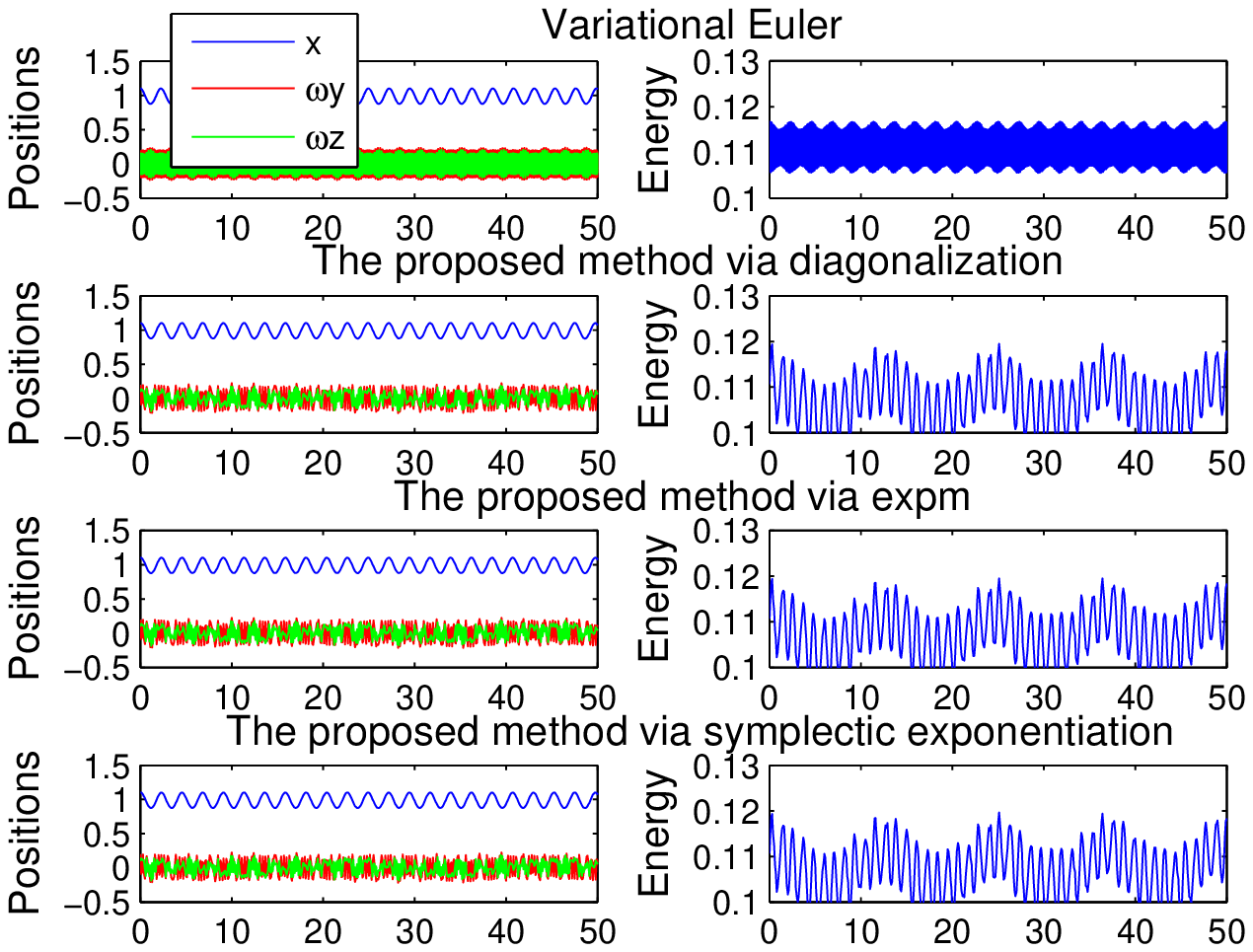}
\label{example2result1}
}
~
\subfigure[Till time 1000]{
\includegraphics[width=0.32\textwidth] {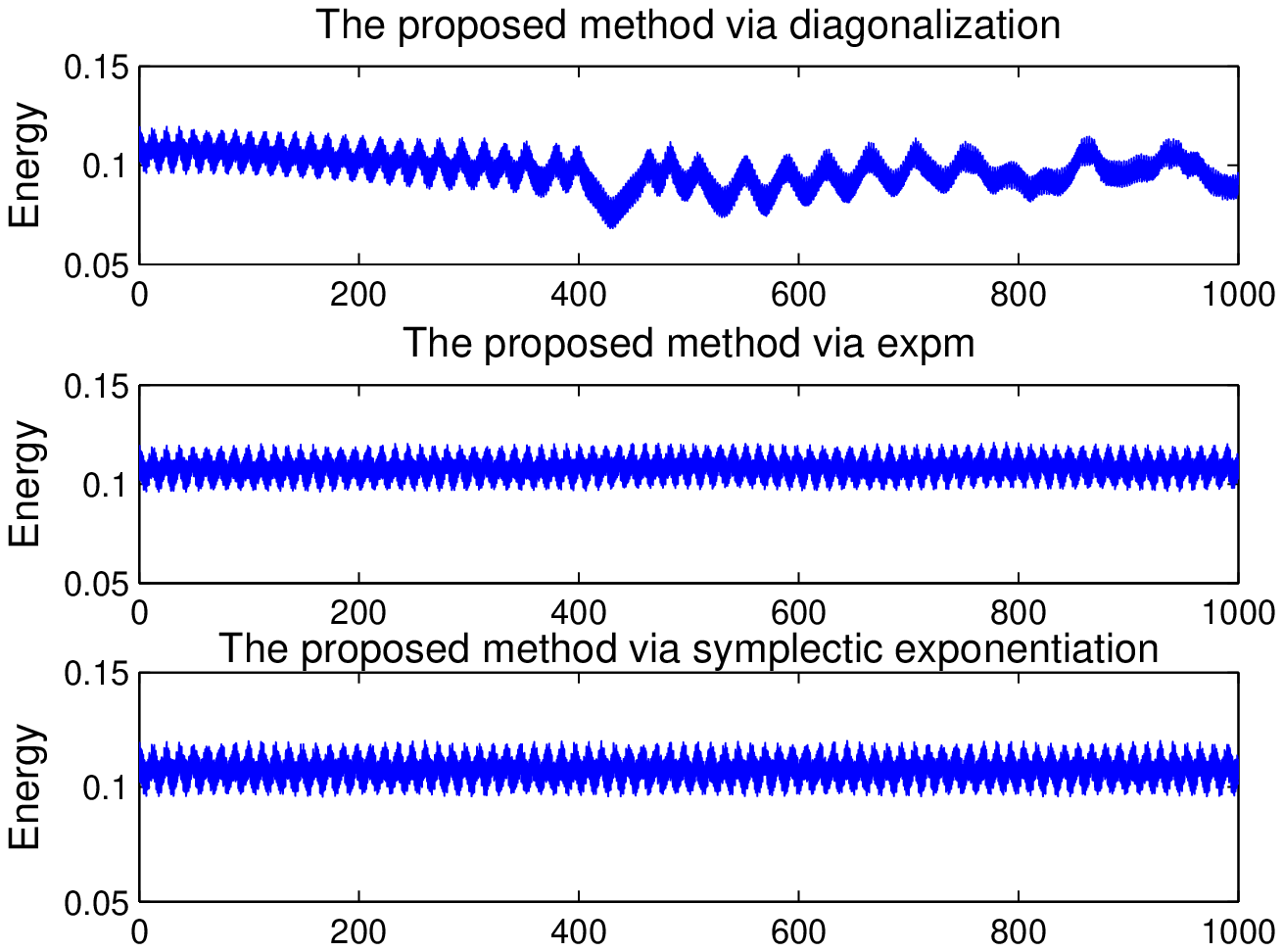}
\label{example2result2}
}
\caption{\footnotesize Simulations of a non-diagonal fast frequency example \eqref{toyexample} by Variational Euler, the proposed method with different implementations of matrix exponentiations. $\omega=100$, VE uses $h=0.1/\omega=0.001$ and the proposed method uses $H=0.1$ and $n=10$; $x(0)=1.1$, $y(0)=0.2/\omega$, $z(0)=0.1/\omega$, and initial momenta are zero.}
\label{nonquad_multiD}
\end{figure}

Figure \ref{nonquad_multiD} shows a comparison between Variational Euler, the proposed method with the matrix exponentiations computed by diagonalization and analytical integration (Eq. \ref{expViaDiagonalization}; diagonalization implemented by MATLAB command `diag'), and the proposed methods based on exponentiations (Eq. \ref{pureMatrixExponentiation} and \ref{phi3}) via MATLAB command `expm' \cite{ScalingSquaring} and via the fast matrix exponentiation method (Integrator \ref{fastMatrixExpScheme}). The default MATLAB matrix multiplication operation is used. All implementations of the proposed method are accurate, except that numerical errors in repetitive diagonalizations contaminated the symplecticity of the corresponding implementation over a long time simulation (as suggested by drifted energy), whereas other two implementations, respectively based on accurate but slow `expm' and fast symplectic exponentiations, do not have this issue. In a typical notebook run with MATLAB R2008b, the above four methods respectively spent 11.12, 0.23, 0.29 and 0.24 seconds on the same integration (till time 50), while 0, 0.14, 0.18, and 0.14 seconds were spent on matrix exponentiations. Computational gain by the symplectic exponentiation algorithm will be much more significant as the fast dimension becomes higher. Notice also that the computational gain by the proposed method over Variational Euler will go to infinity as $\epsilon\rightarrow 0$, even if the fast matrix exponentiation method is not employed.

\subsection{The case of a high-dimensional non-diagonal frequency matrix}
\label{nondiagonalHighExample}

Consider an arbitrarily high-dimensional example:
\begin{equation}
    \mathcal{H}=\frac{1}{2}p^2+\frac{1}{2}y^T y+(x^T x+q^2-1)^2+\frac{1}{2}\omega^2 x^T T(q) x
\end{equation}
where $q,p\in \mathbb{R}$ correspond to the slow variable, $x,y\in \mathbb{R}^{d_f}$ correspond to fast variables, and $T(q)$ is the following Toeplitz matrix valued function:
\begin{equation}
    T(q)=\begin{bmatrix} 1 & \hat{q}^1 & \hat{q}^2 & \ldots & \hat{q}^{d_f-1} \\
                         \hat{q}^1 & 1 & \hat{q}^1 & \ldots & \hat{q}^{d_f-2} \\
                         \hat{q}^2 & \hat{q}^1 & 1 & \ldots & \hat{q}^{d_f-3} \\
                         & & \vdots & & \\
                         \hat{q}^{d_f-1} & \hat{q}^{d_f-2} & \hat{q}^{d_f-3} & \ldots & 1 \end{bmatrix}
    \label{highDexample}
\end{equation}
where $\hat{q}=q/2$ so that eigenvectors and eigenvalues vary slowly with $q$ given an initial condition of $q(0)\approx 1$. Note that the expression of $T(\cdot)$ is highly nonlinear.

\begin{figure}[ht]
\includegraphics[width=\textwidth] {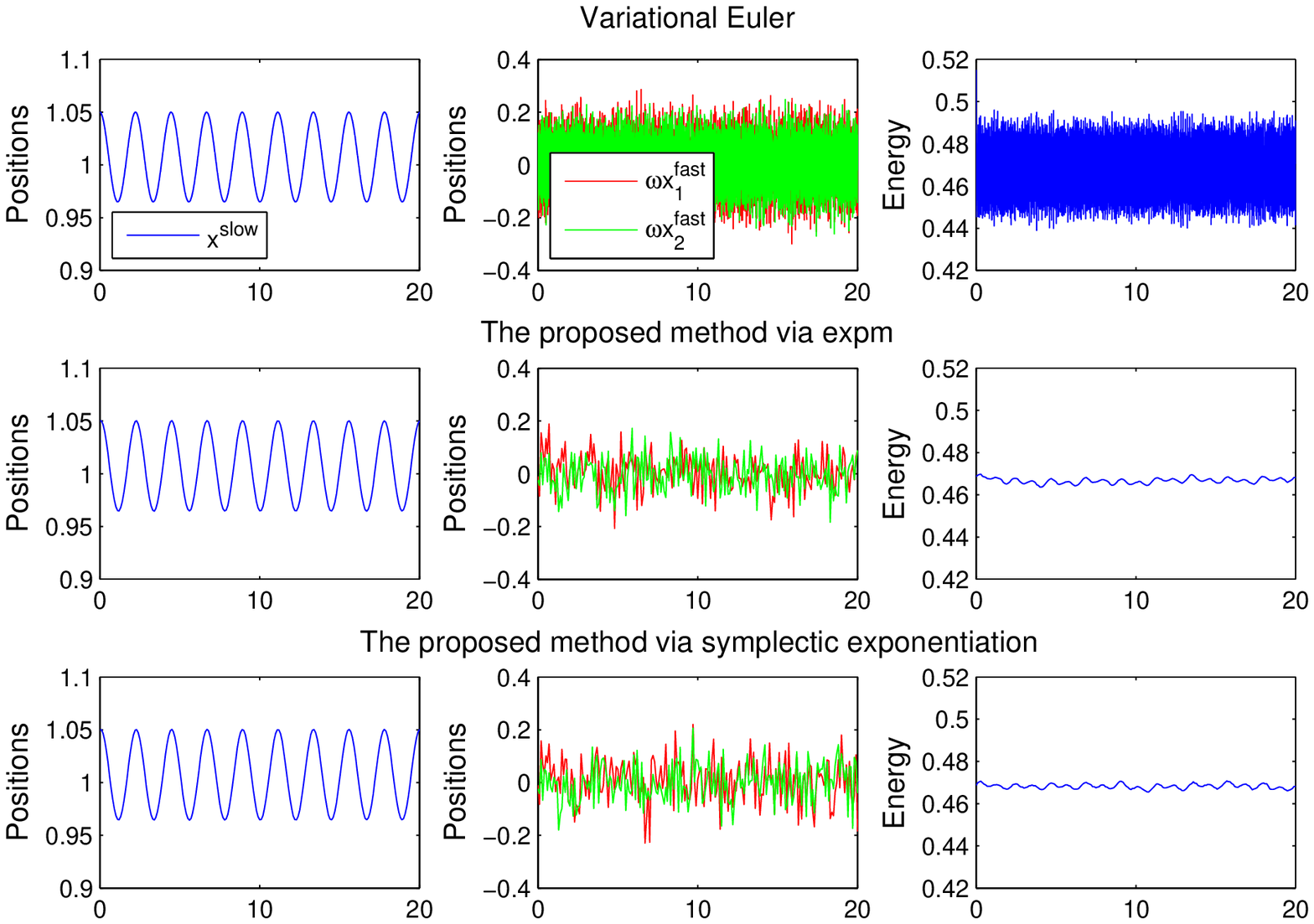}
\caption{\footnotesize Simulations of a non-diagonal fast frequency high-dimensional example \eqref{highDexample} by Variational Euler, the proposed method via MATLAB matrix exponentiation `expm,' and the proposed method via fast matrix exponentiations ($n=10$). Fast variable dimensionality is $d_f=100$. $\omega=1000$. VE uses $h=0.1/\omega$ and the proposed method uses $H=0.1$, $q(0)=1.05$, $x(0)$ is a $d_f+1$-dimensional vector with independent and identically distributed components that are normal random variables with zero mean and variance of $1/\omega/\sqrt{d_f}$ (so that energy is bounded), and initial momenta are zero. Only trajectories of the first two fast variables were drawn for clarity.}
\label{nonquad_highD}
\end{figure}

We present in Figure \ref{nonquad_highD} a comparison between Variational Euler and the proposed methods with the matrix exponentials computed by MATLAB command `expm' and by the fast matrix exponentiation method (Integrator \ref{fastMatrixExpScheme}) on a high dimensional example with $d_f=100$. Accuracy-wise, the proposed method simulations yield results similar to VE (note that fast variables are not fully resolved due to a coarse time step that is larger than their periods). Speed-wise, Variational Euler, the proposed methods via `expm' and via symplectic exponentiation respectively spent 136.7, 66.0 and 12.0 seconds on the same integration, while 65.7 and 11.7 seconds were spent on matrix exponentiation operations in the latter two. Notice that if Coppersmith-Winograd \cite{CoWi90} is used to replace MATLAB matrix multiplication, the number 11.7 should be further reduced. In spite of that, the proposed method with the proposed matrix exponentiation scheme already holds a dominant speed advantage, and this advantage will be even more significant if $\omega$ and/or $d_f$ is further increased (results not shown).

\section{Related work}
\paragraph{Stiff integration:}
  Many elegant methods have been proposed in the area of stiff Hamiltonian integration, and some are closely related to this work. An incomplete list will be discussed here.

Impulse methods \cite{Grubmuller:91,Tuckerman:92} admit uniform error bounds on positions and can be categorized as splitting methods \cite{SIM1}.
In their abstract form, impulse methods are not limited to quadratic stiff potentials; however, their practical implementation requires
an approximation of the flow associated with the stiff potential.
Our method is based on a generalization of the impulse method to (possibly high-dimensional) situations where the stiff potential contains a slowly varying component. Although simple in its abstract expression, the practical implementation of this generalization (for high-dimensional systems) has required the introduction of a non-trivial symplectic matrix exponentiations scheme.

Impulse methods have been mollified \cite{Skeel:99,Sanz-Serna:08} to gain extra stability and accuracy. However, mollified impulse methods and other members of the exponential integrator family \cite{ExponentialIntegrators}, for instance Gautschi-type integrators \cite{Gautschi}, are not based on splitting, and hence the splitting approach in this paper does not immediately generalize them.

The reversible averaging integrator proposed in \cite{LeRe01} averages the force on slow variables and avoids resonant instabilities. It treats the dynamics of slow and fast variables separately and assumes piecewise linear trajectories of the slow variables, both in the same spirit as in our proposed method; it is, however, not symplectic, although reversible.

Implicit methods, for example LIN \cite{ZhSc:93}, work for generic stiff Hamiltonian systems, but \emph{implicit methods in general fail to capture the effective dynamics of the slow time scale because they cannot correctly capture non-Dirac invariant distributions} \cite{LiAbE:08}, and they are generally slower than explicit methods if comparable step lengths are employed.

IMEX is a variational integrator for stiff Hamiltonian systems \cite{Stern:09}. It works by introducing a discrete Lagrangian via trapezoidal approximation of the soft potential and midpoint approximation of the stiff potential. It is explicit in the case of quadratic fast potential, but is implicit in the case of quasi-quadratic fast potentials.

A Hamilton-Jacobi approach is used to derive a homogenization method for multiscale Hamiltonian systems \cite{LeBris:07}, which works for quasi-quadratic fast potentials with scalar frequency and yields a symplectic method. We also refer to \cite{DoLeLe10} for a generalization of this method to systems that have either one varying fast frequency or several constant frequencies. The difficulty with this elegant analytical approach would be to deal with high-dimensional systems.

Other generic multiscale methods that integrate the slow dynamics by averaging the effective contribution of the fast dynamics include: Heterogeneous Multiscale Methods (HMM) \cite{MR2164093, MR2314852, CaSer08, Ariel:08, Seamless09}, the equation-free method \cite{MR2041455, GiKeKup06, KevGio09}, and FLow AVeraging integratORS (FLAVORS) \cite{FLAVOR09}.  Those methods can be applied to a much broader spectrum of problems than considered here. However, they all essentially use a mesoscopic timestep, which is usually one or two orders of magnitude smaller than the coarse step employed here. Moreover, symplecticity is a big concern. In their original form, Heterogeneous Multiscale Methods and equation-free methods are based on the averaging of the instantaneous drifts of slow variables, which breaks symplecticity in all variables. Reversible and symmetric HMM generalizations have been proposed \cite{Ariel:09, SerArTs09}. FLAVORS \cite{FLAVOR09} are based on averaging instantaneous flows by turning on and off stiff coefficients in legacy integrators used as black boxes. In particular, they do not require the identification of slow variables and inherit the symplecticity and reversibility of the legacy integrators that they are derived from.

\paragraph{Matrix exponentiation:} In the case of quasi-quadratic stiff potentials, the proposed algorithm exponentiates a slowly varying matrix at each time step. When the elasticity matrix $K$ is not diagonalizable by a constant orthogonal transformation, a numerical algebra algorithm is employed for that calculation at the expense of $\mathcal{O}(n)$ operations of $d_f$-by-$d_f$ matrix multiplications per time step, where $d_f$ is the dimension of fast variable (and hence $K$), and $n$ is a preset constant that is at most $\log(\epsilon^{-1})$.


There are various approaches to exponentiate a matrix, including diagonalization, series methods, scaling and squaring, ODE solving, polynomial methods, matrix decomposition methods, and splitting, etc., as comprehensively reviewed in \cite{MoLo:03}. Many of these methods, however, differ from our approach here in that they do not guarantee that the resulting implementation of the proposed method to be symplectic as it analytically should be, unless high precision (hence slow computation) is required; most of them could not even guarantee a symplectic approximation to $F_2$ and $F_3$.

The proposed approach (Integrator \ref{fastMatrixExpScheme}) obtains its efficiency by a trick of self-multiplication, which was previously used in the method of scaling and squaring \cite{ScalingSquaring}. However, the Pad\'{e} approximation used in scaling and squaring is replaced by a symplectic and reversible approximation based on the Verlet integrator. Consequently, symplecticity and better efficiency are obtained, and accuracy is kept. Improvements by this numerical exponentiation over `expm' and `diag,' in terms of both accuracy and speed, are observed numerically in Subsection \ref{nondiagonalExample} and \ref{nondiagonalHighExample}.

For our alternative approach (see \eqref{fastMatrixExp} for the general strategy and Appendix for implementational details for the specific purpose of multiscale integration), it uses the slowly varying property of the matrix to repetitively modify the exponential from the previous step by a small symplectic change to get a new exponential. Regarding updating matrix exponentials, since there are results such as \cite{EigenPerturbation} on relationships between perturbed eigenvalues and perturbation in the matrix, a natural thought is to use eigenstructures that were explored in the previous step as initial conditions in iterative algorithms (such as Jacobi-Davidson for eigenvalues \cite{Jacobi-Davidson} or Rayleigh Quotient for extreme eigenvalues \cite{NumericalLinearAlgebra}). This idea, however, did not significantly accelerate the computation as we explored in numerical experiments with an incomplete pool of methods. Other matrix decompositions methods (QR for instance) did not gain much from previous decompositions either in our numerical investigations. Our way of exponential updating is essentially an operator splitting approach, which is analogous to our main vector field splitting strategy that yields the proposed multiscale integrator.

\section*{Acknowledgement}
This work was supported by the National Science Foundation [CMMI-092600].
We sincerely thank Charles Van Loan for a stimulating discussion and Sydney Garstang for proofreading the manuscript.
We are also grateful to two anonymous referees for precise and detailed  comments and suggestions.

\section*{Appendix: an alternative matrix exponentiation scheme}
We will present in Integrator \ref{fastMatrixExpSchemeAlternative} an alternative (symplectic) way for computing $F_{3,k}$ and $G_{2,k,i}$. This alternative is based on iteratively updating the matrix exponential from the computation at the previous step. We will first demonstrate its full version, and then provide a simple approximation which is not exactly symplectic on all variables but symplectic on the fast variables (in the sense of a symplectic submanifold) and exhibits satisfactory long time performance in numerical experiments.

\begin{lemma}
    Define
    \begin{equation}
    \begin{bmatrix}
        \alpha(t) & \beta(t) & \gamma(t) \\
        0 & F_2(t) & G_2(t) \\
        0 & 0 & F_3(t)
    \end{bmatrix}:=\exp\left(
    \begin{bmatrix}
        -N^T & M J & 0 \\
        0 & -N^T & M \\
        0 & 0 & N
    \end{bmatrix} t \right)
    \label{llqgr3hubqiqbr}
    \end{equation}
    Then for any $H$, we have $-F_3(H)^T \gamma(H)=\int_0^H F_3^T(s) M (-JG_2(t)) \, ds$.
    \label{2ndDerivativePre}
\end{lemma}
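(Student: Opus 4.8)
The plan is to exploit the defining linear ODE satisfied by the matrix exponential in \eqref{llqgr3hubqiqbr} and reduce the identity to a one-line computation using a well-chosen auxiliary quantity. Write $\Phi(t)$ for the $3\times 3$ block matrix on the left of \eqref{llqgr3hubqiqbr} and $\mathcal{A}$ for the constant block matrix it exponentiates; then $\dot\Phi(t)=\mathcal{A}\,\Phi(t)$ with $\Phi(0)=I$. Reading off block entries of this matrix ODE yields, in particular, $\dot F_3=N F_3$ with $F_3(0)=I$, $\dot G_2=-N^T G_2+M F_3$ with $G_2(0)=0$, and (from the top row, third block column) $\dot\gamma=-N^T\gamma+M J G_2$ with $\gamma(0)=0$. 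These are the only block equations needed.

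The key step is to introduce $W(t):=F_3(t)^T\gamma(t)$ and differentiate. Using $\dot F_3=N F_3$ and $\dot\gamma=-N^T\gamma+MJG_2$,
\[
\dot W=\dot F_3^{\,T}\gamma+F_3^T\dot\gamma=F_3^T N^T\gamma+F_3^T\bigl(-N^T\gamma+MJG_2\bigr)=F_3^T M J G_2 ,
\]
so the $N^T$ terms cancel identically. Since $W(0)=F_3(0)^T\gamma(0)=0$, integrating from $0$ to $H$ gives $F_3(H)^T\gamma(H)=\int_0^H F_3(s)^T M J G_2(s)\,ds$, hence $-F_3(H)^T\gamma(H)=\int_0^H F_3(s)^T M\,(-J G_2(s))\,ds$, which is exactly the claimed identity (the $t$ in the integrand of the statement being the integration variable $s$).

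There is essentially no obstacle here beyond bookkeeping: one must extract the correct blocks from $\mathcal{A}\Phi$ — in particular, the $(1,3)$ block of $\mathcal{A}\Phi$ is $-N^T\gamma+MJG_2+0\cdot F_3$, so the $F_3$-contribution drops out — and keep track of the initial conditions $F_3(0)=I$, $\gamma(0)=0$. Alternatively, one could avoid $W$ and instead solve the linear block ODEs by variation of constants, writing $F_3(t)=\exp(Nt)$ and $\gamma(t)=\int_0^t\exp\bigl(-N^T(t-s)\bigr)MJG_2(s)\,ds$, then left-multiply by $F_3(H)^T=\exp(N^T H)$; this reproduces the same integral. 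I would present the $W$-argument, since it is the shortest and makes the cancellation transparent.
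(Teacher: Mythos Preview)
Your proof is correct and follows essentially the same approach as the paper: both extract the block ODEs $\dot F_3=NF_3$ and $\dot\gamma=-N^T\gamma+MJG_2$ from the defining matrix exponential and integrate. The paper solves for $\gamma$ by variation of constants and then left-multiplies by $F_3(H)^T=\exp(N^TH)$ (your stated alternative), whereas your primary $W(t)=F_3(t)^T\gamma(t)$ argument is a slight streamlining that makes the same $N^T$-cancellation happen in one line.
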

\begin{proof}
    Differentiating \eqref{llqgr3hubqiqbr} with respect to $t$ and equating each matrix component on left and right hand sides, we obtain:
    \begin{equation}
    \begin{cases}
        \dot{\alpha}=-N^T \alpha \\
        \dot{F_2}=-N^T F_2 \\
        \dot{F_3}=N F_3 \\
        \dot{\beta}=-N^T \beta + M J F_2 \\
        \dot{G_2}=-N^T G_2 + M F_3 \\
        \dot{\gamma}=-N^T \gamma + M J G_2
    \end{cases}
    \end{equation}
    where the initial conditions obviously are $\alpha(0)=I, F_2(0)=I, F_3(0)=I, \beta(0)=0, G_2(0)=0, \gamma(0)=0$.

    Solving these inhomogeneous linear equations leads to known results including $F_2(t)=\exp(-N^T t)$, $F_3(t)=\exp(Nt)$ and $G_2(t)=\int_0^t \exp(-N^T(t-s)) M \exp(N s)\, ds$, as well as new results such as
    \begin{equation}
        \gamma(t)=\int_0^t \exp(-N^T(t-s)) M J G_2(s) \, ds ,
    \end{equation}
    which is equivalent to
    \begin{equation}
        -F_3(H)^T \gamma(H)=\int_0^H F_3(s)^T M (-J G_2(s)) \, ds
    \end{equation}
\end{proof}

\begin{lemma}
    If $M=M^T$, $F_2^T F_3=I$ and $\partial F_3=-J G_2$, such as those derived from $N$ and $M$ defined in Section \ref{gguqirehugqubliafd}, then
    \begin{equation}
        \partial G_2(H)=F_2(H)\left(-(F_3(H)^T \gamma(H))^T-F_3(H)^T \gamma(H) + \int_0^H F_3(s)^T \partial M F_3(s) \, ds - (-J G_2(H))^T G_2(H)\right)
    \end{equation}
    \label{2ndDerivative}
\end{lemma}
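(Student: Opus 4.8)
The plan is to differentiate the identity from Lemma \ref{2ndDerivativePre} with respect to the slow parameter $q^{slow}_i$ (denoted $\partial$), and then simplify the resulting expression using the three structural hypotheses $M=M^T$, $F_2^T F_3 = I$, and $\partial F_3 = -JG_2$. First I would recall that $G_2(H) = \int_0^H \exp(-N^T(H-s)) M \exp(Ns)\,ds = F_2(H)\int_0^H F_3(s)^T M F_3(s)\,ds$, using $F_2(t)=\exp(-N^Tt)$ and $F_3(t)=\exp(Nt)$ together with $F_2(H)F_3(s)^{-1}=F_2(H)F_2(s)=\exp(-N^T(H-s))$ — wait, more carefully, $F_2(H)^{-1} = F_3(H)^T$ is \emph{not} what we want; rather from $F_2^T F_3 = I$ one gets $F_2^{-1} = F_3^T$, so $\exp(-N^T(H-s)) = F_2(H-s) = F_2(H)F_2(-s) = F_2(H)F_3(s)^T$. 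Hence
\begin{equation}
    G_2(H) = F_2(H)\int_0^H F_3(s)^T M F_3(s)\,ds =: F_2(H)\,\Sigma(H),
\end{equation}
where I abbreviate $\Sigma(H) := \int_0^H F_3(s)^T M F_3(s)\,ds$. The goal becomes computing $\partial G_2(H) = \partial F_2(H)\,\Sigma(H) + F_2(H)\,\partial\Sigma(H)$ and recognizing the right-hand side claimed in the lemma.

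The second term is the easy one: differentiating under the integral sign,
\begin{equation}
    \partial\Sigma(H) = \int_0^H \left(\partial F_3(s)^T M F_3(s) + F_3(s)^T \partial M\, F_3(s) + F_3(s)^T M\, \partial F_3(s)\right)ds.
\end{equation}
Using $\partial F_3 = -JG_2$ and $M=M^T$, the first and third terms are transposes of each other's ``building blocks'', and they will combine with the $\gamma$-integral from Lemma \ref{2ndDerivativePre}: indeed $-F_3(H)^T\gamma(H) = \int_0^H F_3(s)^T M(-JG_2(s))\,ds = \int_0^H F_3(s)^T M\,\partial F_3(s)\,ds$, so that term is exactly $-F_3(H)^T\gamma(H)$, and the first term is its transpose $-(F_3(H)^T\gamma(H))^T$. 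The middle term is $\int_0^H F_3(s)^T \partial M\, F_3(s)\,ds$. So $\partial\Sigma(H) = -(F_3(H)^T\gamma(H))^T - F_3(H)^T\gamma(H) + \int_0^H F_3(s)^T\partial M\, F_3(s)\,ds$, which reproduces three of the four terms inside the parentheses of the claimed formula, once multiplied by $F_2(H)$ on the left.

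The remaining work is to show that $\partial F_2(H)\,\Sigma(H) = F_2(H)\cdot\bigl(-(-JG_2(H))^T G_2(H)\bigr)$, i.e. that $\partial F_2(H) = -F_2(H)(-JG_2(H))^T G_2(H)\,\Sigma(H)^{-1}$ — or, cleaner, that $F_2(H)^{-1}\partial F_2(H)\,\Sigma(H) = -(-JG_2(H))^T G_2(H)$. Here I would differentiate $F_2^T F_3 = I$ to get $\partial F_2^T F_3 + F_2^T \partial F_3 = 0$, hence $\partial F_2^T = -F_2^T\partial F_3\, F_3^{-1} = -F_2^T(-JG_2(H))F_3(H)^{-1}$, and use $F_3^{-1} = F_2^T$ to write $\partial F_2^T = -F_2^T(-JG_2(H))F_2(H)^T$; transposing gives $\partial F_2(H) = -F_2(H)(-JG_2(H))^T F_2(H)$. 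Then $\partial F_2(H)\,\Sigma(H) = -F_2(H)(-JG_2(H))^T F_2(H)\Sigma(H) = -F_2(H)(-JG_2(H))^T G_2(H)$, using $F_2(H)\Sigma(H) = G_2(H)$ from the first display. Collecting all four pieces and factoring $F_2(H)$ to the left yields precisely the stated identity.

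The main obstacle I anticipate is bookkeeping the various equivalent forms of $F_2^{-1}$, $F_3^{-1}$, $\exp(\pm N^T t)$ and making sure every substitution $F_3^{-1}=F_2^T$, $\exp(-N^T(H-s))=F_2(H)F_3(s)^T$ is justified \emph{only} from the three stated hypotheses (plus the explicit exponential forms from Lemma \ref{2ndDerivativePre}), rather than from facts special to the Section \ref{gguqirehugqubliafd} construction; and verifying that the term $\int_0^H F_3(s)^T M\,\partial F_3(s)\,ds$ really equals $-F_3(H)^T\gamma(H)$ as given by Lemma \ref{2ndDerivativePre} — this is the one place where the auxiliary $\gamma$ block, introduced precisely for this purpose, does the work. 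Everything else is linear-algebra manipulation and differentiation under the integral, which are routine.
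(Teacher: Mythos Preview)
Your proposal is correct and follows essentially the same approach as the paper. The only cosmetic difference is that the paper differentiates the relation $F_3(H)^T G_2(H)=\int_0^H F_3(s)^T M F_3(s)\,ds$ directly (so the extra term appears as $-\partial F_3(H)^T G_2(H)=-(-JG_2(H))^T G_2(H)$ without computing $\partial F_2$), whereas you first rewrite $G_2=F_2\Sigma$ and obtain that same term via $\partial F_2$ from differentiating $F_2^T F_3=I$; the remaining identification of the three integral terms with $-(F_3^T\gamma)^T$, $-F_3^T\gamma$, and $\int F_3^T\partial M\,F_3$ via Lemma~\ref{2ndDerivativePre} is identical.
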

\begin{proof}
    By Leibniz's rule
    \begin{equation}
        \partial G_2(H)=[F_3(H)^T]^{-1}\left(\partial \left(F_3(H)^T G_2(H)\right) - \partial F_3(H)^T G_2(H)\right)
    \end{equation}
    By the definition of $F_3$ and $G_2$, this is
    \begin{equation}
        \partial G_2(H)=F_2(H)\left(\partial \left(\int_0^H F_3(s)^T M F_3(s) \, ds\right) - (-J G_2(H))^T G_2(H)\right) ,
    \end{equation}
    in which
    \begin{align}
        \partial \left(\int_0^H F_3(s)^T M F_3(s) \, ds\right) &= \int_0^H \partial F_3(s)^T M F_3(s) \, ds + \int_0^H F_3(s)^T M \partial F_3(s) \, ds + \int_0^H F_3(s)^T \partial M F_3(s) \, ds \nonumber\\
        &= \int_0^H (-J G_2(s))^T M F_3(s) \, ds + \int_0^H F_3(s)^T M (-J G_2(s)) \, ds + \int_0^H F_3(s)^T \partial M F_3(s) \, ds \nonumber\\
        &= -(F_3(H)^T \gamma(H))^T-F_3(H)^T \gamma(H) + \int_0^H F_3(s)^T \partial M F_3(s) \, ds
    \end{align}
    for $\gamma(H)$ defined in Lemma \ref{2ndDerivativePre}.
\end{proof}

\begin{Remark}
    $\int_0^H F_3(s)^T \partial M F_3(s) \, ds=\tilde{F}_3(H)^T \tilde{G}_2(H)$ can be computed by again using the trick of:
    \begin{equation}
    \begin{bmatrix}
        \tilde{F}_2(t) & \tilde{G}_2(t) \\
        0 & \tilde{F}_3(t)
    \end{bmatrix}:=\exp\left(
    \begin{bmatrix}
        -N^T & \partial M \\
        0 & N
    \end{bmatrix} t \right)
    \end{equation}
    \label{2ndDerivativeRemark}
    Of course, to get $B'_{0,i,j}=\partial_j B_{0,i}$, we use the fact that $B_{0,i}=G_{2,0,i}(H/2^n)$.
\end{Remark}

\begin{lemma}
    Suppose $q^{fast}_{(k+1)'},p^{fast}_{(k+1)'},q^{slow}_{(k+1)'},p^{slow}_{(k+1)'}$ are obtained from $q^{fast}_{k'},p^{fast}_{k'},q^{slow}_{k'},p^{slow}_{k'}$ by Integrator \ref{extended_SIM} with $F_{3,k}$ and $G_{2,k,i}$ satisfying $F_{3,k}^T J F_{3,k}=J$ and $G_{2,k,i}=-J \frac{\partial}{\partial q^{slow}_{k',i}} F_{3,k}$, then
    \begin{equation}
        \frac{\partial q^{slow}_{(k+1)',i}}{\partial q^{slow}_{k',j}}=I+\frac{H}{2} \begin{bmatrix} q^{fast}_{k'} \\ p^{fast}_{k'} \end{bmatrix} \left( G_{2,k,j}(H)^T J G_{2,k,i}(H) + F_{3,k}(H)^T \frac{\partial}{\partial q^{slow}_{k'}} G_{2,k}(H) \right) \begin{bmatrix} q^{fast}_{k'} \\ p^{fast}_{k'} \end{bmatrix}
        \label{uqiarebgpgot14t13t}
    \end{equation}
    \label{LemmaChainRule}
\end{lemma}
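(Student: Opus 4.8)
The plan is to make the one-step update for $q^{slow}$ explicit and then differentiate it term by term. One full step of the integrator (with the exponentiation of Integrator \ref{fastMatrixExpScheme}) sends $q_{k'}\mapsto q_{k+1}$ by \eqref{phi3numerical} and then $q_{k+1}\mapsto q_{(k+1)'}$ by \eqref{phi12}. Since \eqref{phi3numerical} leaves $q^{slow}$ unchanged and \eqref{phi12} updates it by $q^{slow}_{(k+1)'}=q^{slow}_{k+1}+H p^{slow}_{k+1}$, substituting the $p^{slow}_{k+1}$ update from \eqref{phi3numerical} yields, with $z:=\begin{bmatrix} q^{fast}_{k'} \\ p^{fast}_{k'}\end{bmatrix}$,
\begin{equation}
q^{slow}_{(k+1)',i}=q^{slow}_{k',i}+H\,p^{slow}_{k',i}-\frac{H}{2}\,z^{T}F_{3,k}^{T}G_{2,k,i}\,z .
\end{equation}
Under the stated hypotheses (concretely, in Integrator \ref{fastMatrixExpScheme}) $F_{3,k}$ and $G_{2,k,i}$ depend on $q^{slow}_{k'}$ alone, so the only dependence of the right-hand side on $q^{slow}_{k',j}$ beyond the explicit $q^{slow}_{k',i}$ term is through these two matrices; $z$ and $p^{slow}_{k',i}$ are frozen.

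Next I would differentiate with $\partial_j:=\partial/\partial q^{slow}_{k',j}$. The first term contributes $\delta_{ij}$ (the ``$I$'' in \eqref{uqiarebgpgot14t13t}), the $H p^{slow}_{k',i}$ term contributes nothing, and the quadratic term contributes $-\frac{H}{2}\,z^{T}\partial_j(F_{3,k}^{T}G_{2,k,i})\,z$. By the product rule, $\partial_j(F_{3,k}^{T}G_{2,k,i})=(\partial_j F_{3,k})^{T}G_{2,k,i}+F_{3,k}^{T}\partial_j G_{2,k,i}$. The decisive step is to eliminate $\partial_j F_{3,k}$ using the hypothesis $G_{2,k,j}=-J\,\partial_j F_{3,k}$: left-multiplying by $J$ and using $J^{2}=-I$ gives $\partial_j F_{3,k}=J\,G_{2,k,j}$, hence $(\partial_j F_{3,k})^{T}=G_{2,k,j}^{T}J^{T}=-G_{2,k,j}^{T}J$. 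Substituting, the first piece of the product rule becomes a multiple of $z^{T}G_{2,k,j}^{T}JG_{2,k,i}\,z$ and the second is the $z^{T}F_{3,k}^{T}\partial_j G_{2,k,i}\,z$ term; collecting the two over $-\frac{H}{2}$ produces \eqref{uqiarebgpgot14t13t}, the precise signs in front of the two blocks being exactly the $J$/transpose bookkeeping above. If one wants the manifestly symmetric form of the bracket, one may additionally invoke the symplecticity $F_{3,k}^{T}JF_{3,k}=J$ --- equivalently the symmetry $F_{3,k}^{T}G_{2,k,i}=G_{2,k,i}^{T}F_{3,k}$ of Lemma \ref{lemma_symmetricMatrix} --- which also guarantees that the scalar $z^{T}F_{3,k}^{T}G_{2,k,i}\,z$, and hence its $\partial_j$, is unambiguous.

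The argument is essentially a careful bookkeeping exercise, so the main difficulty is organizational rather than conceptual. Three points must be handled with discipline: (i) exactly which state components are held fixed when forming the partial derivative ($q^{fast}_{k'},p^{fast}_{k'},p^{slow}_{k'}$ all are); (ii) that for Integrator \ref{fastMatrixExpScheme} $F_{3,k}$ is a function of $q^{slow}_{k'}$ alone, so no further chain rule through earlier substeps intervenes here --- in fact \eqref{uqiarebgpgot14t13t} is precisely the object that gets \emph{fed into} such a chain rule for the alternative (updating) scheme of the Appendix, where $F_{3,k}$ additionally depends on $q^{slow}_{(k-1)'}$; and (iii) the sign and transpose tracking when $\partial_j F_{3,k}$ is replaced by $J\,G_{2,k,j}$ inside a quadratic form. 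It is also worth recording as a one-line preliminary remark that $F_{3,k}^{T}G_{2,k,i}$ is symmetric (Lemma \ref{lemma_symmetricMatrix}, or directly from the two hypotheses), so that the statement is well-posed; note that the factor $\partial_j G_{2,k,i}$ appearing on the right-hand side need not be expanded at this stage (it is computable in turn via Lemmas \ref{2ndDerivativePre}--\ref{2ndDerivative}).
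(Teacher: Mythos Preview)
Your approach is essentially the same as the paper's: write the one-step update for $q^{slow}_{(k+1)'}$, apply the product rule to $\partial_j(F_{3,k}^T G_{2,k,i})$, and substitute $\partial_j F_{3,k}=J\,G_{2,k,j}$ (equivalently $(\partial_j F_{3,k})^T=-G_{2,k,j}^T J$) using the hypothesis $G_{2,k,j}=-J\,\partial_j F_{3,k}$. Your write-up is more explicit than the paper's two-line version about which quantities are frozen and why the quadratic form is well-defined, but the argument is identical.
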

\begin{proof}
    Using chain rule, we have:
    \begin{equation}
        \frac{\partial q^{slow}_{(k+1)',i}}{\partial q^{slow}_{k',j}}=I+\frac{H}{2} \begin{bmatrix} q^{fast}_{k'} \\ p^{fast}_{k'} \end{bmatrix} \left( \frac{\partial}{\partial q^{slow}_{k',j}} F_{3,k}(H)^T G_{2,k,i}(H) + F_{3,k}(H)^T \frac{\partial}{\partial q^{slow}_{k',j}} G_{2,k,i}(H) \right) \begin{bmatrix} q^{fast}_{k'} \\ p^{fast}_{k'} \end{bmatrix}
    \end{equation}
    This simplifies to \eqref{uqiarebgpgot14t13t} because $G_{2,k,i}=-J \frac{\partial}{\partial q^{slow}_{k',i}} F_{3,k}$ and $-J^T=J$.
\end{proof}

\begin{Integrator}
Iterative matrix exponentiation scheme (alternative to Integrator \ref{fastMatrixExpScheme}) that obtains $F_{3,k}$ and $G_{2,k,i}$ via symplectic updates. $k$ is the same index as the one used in Integrator \ref{extended_SIM}. $n\geq 1$ is an integer controlling the accuracy of matrix exponential approximations.

\begin{enumerate}
\item
    At the beginning of simulation, let $q^{slow}_{0'}=q^{slow}_0+H p^{slow}_0$ and evaluate $K_0:=K(q^{slow}_{0'})$ and $\partial_i K_0:=\frac{\partial}{\partial q^{slow}_{0',i}} K(q^{slow}_{0'})$ ($i=1,\ldots,d_s$). Calculate $\begin{bmatrix} A_0 & B_{0,i} \\ 0 & C_0 \end{bmatrix}:=\exp \left( \begin{bmatrix} -N_0^T & M_{0,i} \\ 0 & N_0 \end{bmatrix} H/2^n \right)$ by any favorite matrix exponentiation method (e.g., by the symplectic method introduced in Section \ref{gguqirehugqubliafd}), where $N_0=\begin{bmatrix} 0 & I \\ -\epsilon^{-1}K_0 & 0 \end{bmatrix}$ and $M_{0,i}=\begin{bmatrix} \epsilon^{-1}\partial_i K_0 & 0 \\ 0 & 0 \end{bmatrix}$.
\item
    Compute $B'_{0,i,j}= \frac{\partial}{\partial q^{slow}_{0',j}} B_{0,i}$. One cheap way to do so is to use Lemma \ref{2ndDerivative} with Remark \ref{2ndDerivativeRemark}.
\item
    Start the updating loop, with the step count indicated by $k$ starting from 1; let $q^{slow,fast}_1=q^{slow,fast}_0$, $p^{slow,fast}_1=p^{slow,fast}_0$, and $\frac{q^{slow}_{1'}}{q^{slow}_{0'}}=I$;
\item
    Carry out the $q_k,p_k \mapsto q_{k'},p_{k'}$ half-step (in Integrator \ref{extended_SIM}). Evaluate $K_k:=K(q^{slow}_{k'})$, and let $D_k:=\begin{bmatrix} 0 & \epsilon^{-1} (K_k^T-K_{k-1}^T) H/2^n \\ 0 & 0 \end{bmatrix}$. Define $A_{k}:=A_{k-1} \exp(D_k)$ and use the equality $\exp(D_k)=I+D_k$ (since $D_k$ is nilpotent); similarly, define $C_k:=C_{k-1} \exp(-D_k^T)=C_{k-1}-C_{k-1}D_k^T$;
    \label{loopStarted}
\item
    Let $B_{k,i}=-J \frac{\partial C_k}{\partial q^{slow}_{k',i}}$, which can be computed from known values using chain rule:
    \begin{align}
        B_{k,i} &= -J \frac{\partial (C_{k-1}(I+D_k))}{\partial q^{slow}_{k',i}}
        =-J\left(\sum_{j=1}^{d_s} \frac{\partial q^{slow}_{(k-1)',j}}{\partial q^{slow}_{k',i}} \frac{\partial C_{k-1}}{\partial q^{slow}_{{k-1}',j}} (I+D_k)+C_{k-1} \frac{\partial D_k}{\partial q^{slow}_{k',i}}\right) \nonumber\\
        &= \sum_{j=1}^{d_s} \frac{\partial q^{slow}_{(k-1)',j}}{\partial q^{slow}_{k',i}} B_{k-1,j}(I+D_k)+C_{k-1} \frac{\partial D_k}{\partial q^{slow}_{k',i}}
    \end{align}
    To compute $\frac{\partial D_k}{\partial q^{slow}_{k',i}}$, we need the derivatives of $K_k$ and $K_{k-1}$ with respect to $q^{slow}_{k',i}$; the former is trivial, and the latter again can be computed by chain rule:
    \begin{equation}
        \frac{\partial K_{k-1}^T}{\partial q^{slow}_{k',i}}=\sum_{j=1}^{d_s} \frac{\partial q^{slow}_{(k-1)',j}}{\partial q^{slow}_{k',i}} \frac{\partial K_{k-1}^T}{\partial q^{slow}_{{k-1}',j}}
    \end{equation}
    \label{updateBasDerivative}
\item
    $B'_{k,i,j}$ can be similarly computed from $B'_{k-1,i,j}$, $B_{k-1,i}$, $C_{k-1}$ and $D_k$ by repetitively applying chain rule. The detail is lengthy and hence omitted.
\item  Let $F_{2,k}^1:=A_k$, $G_{2,k,i}^1:=B_{k,i}$, $F_{3,k}^1:=C_k$, then repetitively apply $\begin{bmatrix} F_{2,k}^{j+1} & G_{2,k,i}^{j+1} \\ 0 & F_{3,k}^{j+1} \end{bmatrix}: = \begin{bmatrix} F_{2,k}^j & G_{2,k,i}^j \\ 0 & F_{3,k}^j \end{bmatrix}^2 = \begin{bmatrix} F_{2,k}^j F_{2,k}^j & F_{2,k}^j G_{2,k,i}^j + G_{2,k,i}^j F_{3,k}^j \\ 0 & F_{3,k}^j F_{3,k}^j \end{bmatrix}$ for $j=1,\ldots,n$, and finally define $F_{2,k}:=F_{2,k}^{n+1}$, $G_{2,k,i}:=G_{2,k,i}^{n+1}$, $F_{3,k}=F_{3,k}^{n+1}$.
    \label{exponential_repeat2}
\item
    Compute $\frac{\partial q^{slow}_{(k+1)',i}}{\partial q^{slow}_{k',j}}$ by using Lemma \ref{LemmaChainRule}, so that it could be used by Step \ref{updateBasDerivative} for the next $k$. $\frac{\partial}{\partial q^{slow}_{k',j}}G_{2,k,i}(H)$ is computed based on the following:
    \begin{equation}
        \frac{\partial}{\partial q^{slow}_{k',j}} (A_k B_{k,i}+B_{k,i} C_k) = -A_k B_{k,j}^T J A_k B_{k,i} + A_k B'_{k,i,j} + B'_{k,i,j} C_k - B_{k,i} J B_{k,j}
    \end{equation}
    where the first term is due to $\frac{\partial A_k}{\partial q^{slow}_{k',j}}=-A_k B_{k,j}^T J A_k$, which is because $\partial A^T C + A^T \partial C=\partial (A^T C)=\partial I=0$ and therefore $\partial A^T = -A^T \partial C C^{-1} = A^T J B C^{-1}=A^T J B A^T$.
    A similar trick of self multiplication applies to get the derivative of the $2^n$-times product.
\item
    Carry out the $q_{k'},p_{k'} \mapsto q_{k+1},p_{k+1}$ half-step update of numerical integration using $F_{2,k}$, $F_{3,k}$ and $G_{2,k,i}$, and then increase $k$ by 1 and go to Step \ref{loopStarted} until integration time is reached.
\end{enumerate}
\label{fastMatrixExpSchemeAlternative}
\end{Integrator}

$F_{3,k}$ and $G_{2,k,i}$ computed in this way (Integrator \ref{fastMatrixExpSchemeAlternative}) will also satisfy Lemma \ref{lemma_flowSymplecticity} and render the integration symplectic on all variables. Proofs are omitted but they are analogous to those in Section \ref{SectionAnalysisSymplecticity}, and all structures, such as reversibility, symplecticity of $F_2$ and $F_3$ (illustrated by corresponding lemmas), and the relation between $F_3$ and $G_2$, will be preserved as long as they are satisfied by $A_0,B_{0,i},C_0$ (i.e., the initial matrix exponentiation is accurate).

In terms of efficiency, this method only uses one single matrix exponentiation operation and then keeps on updating it. Nevertheless, it is not easy to implement, and its speed advantage is not dominant. However, if the requirement on symplecticity is not that strict and a small numerical error in the matrix exponential is allowed (recall an analogous case of the famous implicit mid-point integrator, in which implicit solves are in fact not done perfectly and continuously polluting the symplecticity), we could use the approximation of $\frac{\partial q^{slow}_{(k+1)',i}}{\partial q^{slow}_{k',j}}=I$. This will introduce a local error of $\mathcal{O}(Hn/2^n)$ in $G_{2,k,i}$ at each timestep (details omitted), but the local error in $F_{2,k}$ and $F_{3,k}$ is 0, and the method is symplectic on the submanifold of the fast variables (although not symplectic on all variables). The approximating method is:

\begin{Integrator}
An efficient approximation of Integrator \ref{fastMatrixExpSchemeAlternative}:
\begin{enumerate}
\item
    At the beginning of simulation, let $q^{slow}_{0'}=q^{slow}_0+H p^{slow}_0$ and evaluate $K_0:=K(q^{slow}_{0'})$ and $\partial_i K_0:=\partial_i K(q^{slow}_{0'})$ ($i=1,\ldots,d_s$) and calculate $\begin{bmatrix} A_0 & B_{0,i} \\ 0 & C_0 \end{bmatrix}:=\exp \left( \begin{bmatrix} -N_0^T & M_{0,i} \\ 0 & N_0 \end{bmatrix} H/2^n \right)$ by any favorite matrix exponentiation method, where $N_0=\begin{bmatrix} 0 & I \\ -\epsilon^{-1}K_0 & 0 \end{bmatrix}$ and $M_{0,i}=\begin{bmatrix} \epsilon^{-1}\partial_i K_0 & 0 \\ 0 & 0 \end{bmatrix}$; let $q^{slow,fast}_1=q^{slow,fast}_0$ and $p^{slow,fast}_1=p^{slow,fast}_0$.
\item
    Start the updating loop, with the step count indicated by $k$ starting from 1;
\item
    Carry out the $q_k,p_k \mapsto q_{k'},p_{k'}$ half-step. Evaluate $K_k:=K(q^{slow}_{k'})$ and $\partial_i K_k:=\partial_i K(q^{slow}_{k'})$, let $D_k:=\begin{bmatrix} 0 & \epsilon^{-1} (K_k^T-K_{k-1}^T) H/2^n \\ 0 & 0 \end{bmatrix}$ and $E_{k,i}:=\begin{bmatrix} \epsilon^{-1} (\partial_i K_k-\partial_i K_{k-1}) H/2^n & 0 \\ 0 & 0 \end{bmatrix}$. Define $\begin{bmatrix} A_{k} & B_{k,i} \\ 0 & C_{k} \end{bmatrix}:=\begin{bmatrix} A_{k-1} & B_{k-1,i} \\ 0 & C_{k-1} \end{bmatrix} \times \exp \begin{bmatrix} D_k & E_{k,i} \\ 0 & -D_k^T \end{bmatrix}$ and use the equality $\exp \begin{bmatrix} D_k & E_{k,i} \\ 0 & -D_k^T \end{bmatrix} =\begin{bmatrix} I+D_k & E_{k,i} \\ 0 & I-D_k^T \end{bmatrix}$ (because $D_k E_{k,i}=0$ and $E_{k,i} D_k^T=0$) to evaluate $A_k=A_{k-1}+A_{k-1}D_k$, $B_{k,i}=B_{k-1,i}+A_{k-1}E_{k,i}-B_{k-1,i}D_k^T$, and $C_k=C_{k-1}-C_{k-1}D_k^T$;
    \label{loopStarted2}
\item  Let $F_{2,k}^1:=A_k$, $G_{2,k,i}^1:=B_{k,i}$, $F_{3,k}^1:=C_k$, then repetitively apply $\begin{bmatrix} F_{2,k}^{j+1} & G_{2,k,i}^{j+1} \\ 0 & F_{3,k}^{j+1} \end{bmatrix}: = \begin{bmatrix} F_{2,k}^j & G_{2,k,i}^j \\ 0 & F_{3,k}^j \end{bmatrix}^2 = \begin{bmatrix} F_{2,k}^j F_{2,k}^j & F_{2,k}^j G_{2,k,i}^j + G_{2,k,i}^j F_{3,k}^j \\ 0 & F_{3,k}^j F_{3,k}^j \end{bmatrix}$ for $j=1,\ldots,n$, and finally define $F_{2,k}:=F_{2,k}^{n+1}$, $G_{2,k,i}:=G_{2,k,i}^{n+1}$, $F_{3,k}=F_{3,k}^{n+1}$.
    \label{exponential_repeat3}
\item
    Carry out the $q_{k'},p_{k'} \mapsto q_{k+1},p_{k+1}$ half-step update of numerical integration using $F_{2,k}$, $F_{3,k}$ and $G_{2,k,i}$, and then increase $k$ by 1 and go to Step \ref{loopStarted2} until integration time is reached.
\end{enumerate}
\end{Integrator}

Numerical experiments presented in Section \ref{SectionNumerics} are repeated using this approximating integrator. Energy preservations are as good as before, and slow trajectories show no significant deviation, suggesting no significant effect of the approximated symplecticity (detailed results omitted). This approximation, on the other hand, allows a choice of an even smaller $n$, such as $n=5$ for the previous examples, which results in a further speed-up.

\def\cprime{$'$} \def\cprime{$'$} \def\cprime{$'$}
  \def\cydot{\leavevmode\raise.4ex\hbox{.}}

\end{document}